\documentclass[reqno, 10pt]{amsart}
\input epsf
\usepackage[arrow, matrix, all, cmtip]{xy}
\usepackage{graphicx}
\usepackage{amsmath}
\usepackage{hyperref}
\usepackage{comment}
\usepackage{color}
\usepackage{amsmath, latexsym, amssymb}
\usepackage{epstopdf}
\input xypic

\setlength{\headheight}{32pt} \setlength{\headsep}{29pt}
\setlength{\footskip}{28pt} \setlength{\textwidth}{444pt}
\setlength{\textheight}{636pt} \setlength{\marginparsep}{7pt}
\setlength{\marginparpush}{7pt} \setlength{\oddsidemargin}{4.5pt}
\setlength{\evensidemargin}{4.5pt} \setlength{\topmargin}{-15pt}
\setlength{\footnotesep}{8.4pt} \sloppy
\parindent=.75pc
\textheight 20.5cm

\numberwithin{equation}{section}
\theoremstyle{plain}
\newtheorem{lemma}{Lemma}[section]
\newtheorem{proposition}[lemma]{Proposition}
\newtheorem{theorem}[lemma]{Theorem}
\newtheorem{corollary}[lemma]{Corollary}

\theoremstyle{definition}
\newtheorem{definition}[lemma]{Definition}
\newtheorem{remark}[lemma]{Remark}
\newtheorem{example}[lemma]{Example}

\DeclareGraphicsRule{.tif}{png}{.png}{`convert #1 `dirname #1`/`basename #1 .tif`.png}

\newcommand{\R}{{\mathbb R}}

\newcommand{\F}{{\mathbb F}}

\newcommand{\Q}{{\mathbb Q}}

\newcommand{\D}{{\mathbb D}}
\newcommand{\G}{{\mathbb G}}
\newcommand{\W}{{\mathbb W}}
\newcommand{\bbS}{{\mathbb S}}

\newcommand{\Cc}{{\mathcal C}} 

\newcommand{\Ff}{{\mathcal F}}

\newcommand{\Kk}{{\mathcal K}}

\newcommand{\Ll}{{\mathcal L}} 

\newcommand{\om}{{\omega}}
\newcommand{\eps}{{\varepsilon}}

\newcommand{\im}{{\rm Im\,}}

\newcommand{\la}{\langle}
\newcommand{\ra}{\rangle}


\newcommand{\INTO}{\hookrightarrow}

\newcommand{\Der}{\mathcal{D}}
\newcommand{\der}{D}
\newcommand{\id}{\operatorname{id}}
\newcommand{\Mod}{\, \textnormal{mod}\, }

\makeatletter
\def\@tocline#1#2#3#4#5#6#7{\relax
  \ifnum #1>\c@tocdepth 
  \else
    \par \addpenalty\@secpenalty\addvspace{#2}%
    \begingroup \hyphenpenalty\@M
    \@ifempty{#4}{%
      \@tempdima\csname r@tocindent\number#1\endcsname\relax
    }{%
      \@tempdima#4\relax
    }%
    \parindent\z@ \leftskip#3\relax \advance\leftskip\@tempdima\relax
    \rightskip\@pnumwidth plus4em \parfillskip-\@pnumwidth
    #5\leavevmode\hskip-\@tempdima
      \ifcase #1
       \or\or \hskip 1.5 em \or \hskip 2em \else \hskip 3em \fi%
      #6\nobreak\relax
    \hfill\hbox to\@pnumwidth{\@tocpagenum{#7}}\par
    \nobreak
    \endgroup
  \fi}
\makeatother


\title[Deformations of Coisotropic Submanifolds]
{Deformations of Coisotropic Submanifolds
\\ in Jacobi Manifolds}

\author{H\^ong V\^an L\^e}
\address{Institute of Mathematics of ASCR, Zitna 25, 11567 Praha 1, Czech Republic.}
\email{hvle@math.cas.cz}

\author{Yong-Geun Oh}
\address{Center for Geometry and Physics, Institute for Basic Sciences (IBS), 77 Cheongam-ro, Nam-gu,
Pohang, Korea \&
Department of Mathematics, POSTECH, Pohang, Korea}
\email{yongoh1@postech.ac.kr}

\author{Alfonso G.~Tortorella}
\address{Dipartimento di Matematica e Informatica ``U.~Dini'', Universit\`a degli Studi di Firenze, Viale Morgagni 67/a
50134 Firenze, Italy.}
\email{alfonso.tortorella@math.unifi.it}

\author{Luca Vitagliano}
\address{DipMat, Universit\`a degli Studi di Salerno \& Istituto Nazionale di Fisica Nucleare, GC Salerno, via Giovanni Paolo II n${}^\circ$ 123, 84084 Fisciano (SA) Italy.}
\email{lvitagliano@unisa.it}

\thanks{The first named author is partially supported by RVO: 67985840,
the second named author is supported by the IBS project \#IBS-R003-D1}

\begin{document}

\begin{abstract}
	In this paper, we attach an $L_\infty$-algebra to any coisotropic submanifold in a Jacobi manifold.
	Our construction generalizes and unifies analogous constructions by Oh-Park (symplectic case), Cattaneo-Felder (Poisson case), L\^e-Oh (locally conformal symplectic case).
	As a new special case, we attach an $L_\infty$-algebra to any coisotropic submanifold in a contact manifold.
	The $L_\infty$-algebra of a coisotropic submanifold $S$ governs the (formal) deformation problem of $S$.
\end{abstract}

\keywords{Jacobi manifold, coisotropic submanifolds, contact manifolds, smooth deformations, formal deformations, moduli of coisotropic submanifolds, Hamiltonian equivalence, gauge equivalence, $L_\infty$-algebras}

\subjclass[2010]{Primary 53D35}

\maketitle

\tableofcontents

\section{Introduction}\label{sec:intro}

Jacobi structures were independently introduced by Lichnerowicz \cite{Lich1978} and
Kirillov \cite{Kiri1976}, and they are a combined generalization of symplectic or Poisson
structures and contact structures. Note that Kirillov \emph{local Lie algebras with one dimensional fiber} \cite{Kiri1976} are slightly more general than Lichnerowicz \emph{Jacobi manifolds}. \color{black} In this note we will adopt the following definition, which is equivalent to Kirillov's one: a Jacobi manifold is a manifold $M$ equipped with a Jacobi structure, i.e.~a pair $(L, \{-,-\})$ consisting of a line bundle $L \to M$ and a Lie bracket $\{-,-\}$ on sections of $L$ which is a first order differential operator in each entry (see Definition \ref{definition:abstractj}). Jacobi manifolds \emph{\`a la} Lichnerowicz correspond to the case when $L = M \times \mathbb R$ is the trivial line bundle, and are, somehow, more popular. So we reserve the terminology \emph{standard Jacobi manifolds} for them. \color{black} While general Jacobi manifolds encompass non-coorientable contact manifolds, standard Jacobi manifolds do not. 

Coisotropic submanifolds in (standard) Jacobi manifolds have been first studied by Ib\'a\~nez-de Le\'on-Marrero-Mart\'in de Diego \cite{ILMM1997}. They showed that these submanifolds play a similar role as coisotropic submanifolds in Poisson manifolds. For instance, the graph of a conformal Jacobi morphism $f: M_1 \to M_2$ between Jacobi manifolds is a coisotropic submanifold in $M_1 \times M_2 \times \R$ equipped with an appropriate Jacobi structure. 
Other important examples of coisotropic submanifolds in a Jacobi manifold $M$ are leaves of the characteristic distribution, and zero level sets of equivariant momentum maps.
Since the property of being coisotropic does not change in the same conformal class of a standard Jacobi manifold (see Remark \ref{rem:conf} and Lemma \ref{lem:cois}), it seems to us that we should not restrict the study of coisotropic submanifolds to those inside Poisson manifolds, and, even more, we should in fact consider the case of coisotropic submanifolds in general (i.e.~non-necessarily standard) Jacobi manifolds.

One purpose of the present article is to extend the construction of an $L_\infty$-algebra attached to a coisotropic submanifold $S$ to the Jacobi case, generalizing analogous constructions in
\cite{OP2005} (symplectic case), \cite{CF2007} (Poisson case), \cite{LO2012} (locally conformal symplectic case).
Our construction encompasses all the known cases as special cases and
reveals the prominent role of the gauge algebroid $\der L$ of a line bundle $L$.
In all previous cases $L$ is a trivial line bundle while it is not necessarily so for general Jacobi manifolds. As a new special case, our construction canonically applies to coisotropic submanifolds in
any (not necessarily co-orientable) contact manifold. We also provide a
global tensorial description of our $L_\infty$-algebra, in the spirit of \cite{CF2007}, originally given in the language of (formal) $Q$-manifolds \cite{AKSZ1997} for the symplectic case (see \cite[Appendix]{OP2005}).

The $L_\infty$-algebra of a coisotropic submanifold $S$ governs the formal deformation problem of $S$. In this respect, another purpose of the present article is to present necessary and sufficient conditions under which the $L_\infty$-algebra of $S$ governs the \emph{non-formal} deformation problem as well. Our Proposition~\ref{prop:fana} extends - even in the Poisson setting - the sufficient condition given by Sch\"atz and Zambon in \cite{SZ2012} to a necessary and sufficient condition. We also discuss
the relation between Hamiltonian equivalence of coisotropic sections and gauge equivalence of Maurer-Cartan elements. We obtain a satisfactory description of this relation (Proposition \ref{prop:MC_gauge}) and discuss its consequences (Theorem \ref{prop:fana1} and Corollary \ref{cor:infequi}).

Note that Jacobi manifolds can be understood as homogeneous Poisson manifolds (of a special kind) via the ``Poissonization construction'' (see, e.g.~\cite{DLM1991, Marle1991}). However, not all coisotropic submanifolds in the Poissonization come from coisotropic submanifolds in the original Jacobi manifold. On the other hand, if we regard a Poisson manifold as a Jacobi manifold, all its coisotropic submanifolds are coisotropic in the Jacobi sense as well. In particular, the deformation problem of a coisotropic submanifold in a Jacobi manifold is genuinely more general than its analogue in the Poisson setting.

Our paper is organised as follows. In Section \ref{sec:abstract_jac_mfd} we attach important algebraic and geometric structures to a Jacobi manifold. Our approach, via gauge algebroids and first order multi-differential calculus on non-trivial line bundles, unifies and simplifies previous, analogous constructions for Poisson manifolds and locally conformal symplectic manifolds.
In Section \ref{sec:coiso}, using results in Section \ref{sec:abstract_jac_mfd}, we attach an $L_\infty$-algebra to any closed coisotropic submanifold in a Jacobi manifold.
In Section \ref{sec:deform} we study the deformation problem of coisotropic submanifolds. In particular we discuss the relation between smooth coisotropic deformations and formal coisotropic deformations as well as the moduli problem under Hamiltonian equivalence.
In Section \ref{sec:contact} we apply the theory to the contact case, which is, in a sense, analogous to the symplectic case analysed by Oh-Park \cite{OP2005}. In Section \ref{subsec:} we present an example of a coisotropic submanifold in a contact manifold whose deformation problem is obstructed.

Finally, the paper contains two appendices. The first one collects some facts about gauge algebroids and Schouten-Jacobi algebras that are needed in the main body of the paper. 
In the second one we compute explicitly the multi-brackets in the $L_\infty$-algebra of a pre-contact manifold, thus providing a proof of Theorem \ref{prop:multi}.

\section[Jacobi manifolds and associated algebraic and geometric structures]{Jacobi manifolds and associated algebraic and geometric structures}\label{sec:abstract_jac_mfd}

In this section we recall the definition of Jacobi manifolds and present important examples (Definition \ref{definition:abstractj}, Examples \ref{ex:1}) of them. Our primary sources are \cite{Kiri1976}, \cite{Lich1978}, \cite{Marle1991}, \cite{GM2001}, and the recent paper by Crainic and Salazar \cite{CS2013} whose philosophy/approach \emph{\`a la} Kirillov we adopt. Accordingly, we retain the terms \emph{standard Jacobi manifolds} for Jacobi manifolds in the sense of Lichnerowicz. Generically non-trivial line bundles and first order multi-differential calculus on them 
play a prominent role in Jacobi geometry. We also associate important algebraic and geometric structures with Jacobi manifolds. Namely, we recall the notion of \emph{Jacobi algebroid} (see \cite{GM2001} and \cite{IM2000a} for the equivalent notion of \emph{Lie algebroid with a $1$-cocycle}), but we adopt a slightly more general approach to incorporate the non-trivial line bundle case. We discuss the existence of a Jacobi algebroid structure on the first jet bundle $J^1 L$ of the Jacobi bundle of a Jacobi manifold $(M, L, \{-,-\})$ (Example \ref{ex:jacb2}), first discovered by Kerbrat and Souici-Benhammadi in the \emph{standard} case $L = M \times \R $ \cite{KS1993} (see \cite{CS2013} for the general case). Finally, we discuss the notion of morphisms of Jacobi manifolds. 

\subsection{Jacobi manifolds and their canonical bi-linear forms}\label{sec:a_jac_mfd}

Let $M$ be a smooth manifold.

\begin{definition}\label{definition:abstractj}
	A \emph{Jacobi structure} on $M$ is a pair $(L,\{-,-\})$ where $L\to M$ is a (generically non-trivial) line bundle, and $\{-,-\} : \Gamma (L) \times \Gamma (L) \rightarrow \Gamma (L)$ is a Lie bracket which, moreover, is a first order differential operator in both entries. A \emph{Jacobi manifold} is a manifold equipped with a Jacobi structure. The bundle $L$ and the bracket $\{-,-\}$ will be referred to as the \emph{Jacobi bundle} and the \emph{Jacobi bracket} respectively.
\end{definition}

A Jacobi bracket $\{-,-\}$ is, by definition, a (first order) bi-differential operator. We collect basic facts, including our notations and conventions, about (multi-)differential operators in Appendix \ref{sec:app_0}. In the following, we will often refer to it for details.

\begin{example}\label{ex:1} \
	
	\begin{enumerate}
		\item Any (possibly non-coorientable) contact manifold $(M,C)$ is naturally equipped with a Jacobi structure, with Jacobi bundle given by the (possibly non-trivial) line bundle $TM/C$ (see Section \ref{sec:contact}).
		
		\item Recall that a locally conformal symplectic (l.c.s.) manifold is naturally equipped with a standard Jacobi structure sometimes called the \emph{associated locally conformal Poisson structure}. There is a slight generalization of a l.c.s.~manifold in the same spirit as Jacobi manifolds (see Appendix A of \cite{V2014}). Call it an \emph{l.c.s.~manifold} as well. Then, any l.c.s.~manifold is naturally equipped with a Jacobi structure \cite{V2014}.
		\item Let $\{ \om_t\}_{t \in I}$ be a smooth l.c.s.~deformation of a l.c.s.~form $\om_0$ on a manifold $M$, where $I$ is an open interval in $\R$ containing $0$. Denote by $J_t$ the standard Jacobi structure on $M$ associated with $ \om_t$, and let $\tilde J: C^\infty (M \times I) \times C^\infty (M\times I)\to C^\infty(M\times I)$ be defined by $\tilde J (\tilde g, \tilde f) (x, t) : = J_t (\tilde f (-, t), \tilde g (-, t)) (x)$.
		Then it is not hard to verify that $(M\times I, \tilde J)$ is a standard Jacobi manifold.
	\end{enumerate}
\end{example}

Let $(M, L, \{-,-\})$ be a Jacobi manifold and $\lambda \in \Gamma (L)$. Then
$\Delta_\lambda := \{ \lambda , -\} $ 
is a derivation of $L$. The symbol of $\Delta_\lambda$ (see Appendix \ref{sec:app_0}) will be denoted by $X_\lambda$.

\begin{remark}\label{rem:jalgebra}
	By definition, a Jacobi bracket $\{-,-\}$ on sections of a line bundle $L \to M $ satisfies the following generalized Leibniz rule
	\begin{equation}
		\{ \lambda , f \mu \} = f \{ \lambda , \mu \} + X_\lambda (f) \mu,
	\end{equation}
	$\lambda, \mu \in \Gamma (L)$, $f \in C^\infty (M)$. 
\end{remark}

Denote by $J^1 L$ the bundle of $1$-jets of sections of $L$ and let $j^1 : \Gamma (L) \to \Gamma (J^1 L)$ be the first jet prolongation. The bi-differential operator $\{-,-\}$ can be interpreted as an $L$-valued, skew-symmetric, bi-linear form $J  : \wedge^2 J^1 L \to L$. Namely, $J$ is uniquely determined by
\[
J  (j^1 \lambda, j^1 \mu) = \{\lambda, \mu \},
\]
for all $\lambda, \mu \in \Gamma (L)$. \color{black} 

\begin{remark}
	As $\{-,-\}$ and $J$ contain the same information, we will sometimes identify them and write $J \equiv \{-,-\}$. For instance we will write $[J, \square]^{SJ}$ for the \emph{Schouten-Jacobi bracket} of $\{-,-\}$ and another (first order) multi-differential operator $\square$ (see Appendix \ref{sec:app_0}). On the other hand, we will always use the symbol $J$ for the bi-linear form $\wedge^2 J^1 L \to L$, and we will always use the symbol $\{-,-\}$ when we want to act with the bracket on sections of $L$.
\end{remark}
\color{black}

Denote by $\der L = \mathrm{Hom} (J^1 L, L)$ the gauge algebroid of the line bundle $L$ (see Appendix \ref{sec:app_0} for details). Then, the bi-linear form $J$ determines an obvious morphism of vector bundles $J ^\# : J^1 L \to \der L$, defined by $ J ^\# (\alpha) \lambda := J  (\alpha, j^1\lambda )$, where $\alpha \in \Gamma (J^1 L)$ and $\lambda \in \Gamma (L)$. The \emph{bi-symbol} $\Lambda_J$ of $\{-,-\}$ will be also useful. It is defined as follows. \color{black} Recall that there is a natural vector bundle embedding $\gamma : T^\ast M \otimes L \to J^1 L$, sometimes called the \emph{co-symbol}, well-defined by $\gamma (df \otimes \lambda) := j^1 (f \lambda) - f j^1 \lambda$, for all $f \in C^\infty (M)$, and $\lambda \in \Gamma (L)$. The co-symbol fits in the exact sequence
\[
0 \longrightarrow T^\ast M \otimes L \overset{\gamma}{\longrightarrow} J^1 L \longrightarrow L \longrightarrow 0,
\]
where $J^1 L \to L$ is the natural projection. Then
\color{black}
$
\Lambda_J : \wedge^2 (T^\ast M \otimes L) \to L
$
is the bi-linear form obtained by restricting $J $ to $T^\ast M \otimes L$ regarded as a subbundle of $J^1 L$ via the \emph{co-symbol}. Namely,
\[
\Lambda_J (\eta, \theta) := J  (\gamma (\eta), \gamma (\theta)),
\]
for all $\eta,\theta \in T^\ast M \otimes L$. It immediately follows from the definition that
\begin{equation}\label{eq:LambdaJ}
\Lambda_J (df\otimes \lambda, dg\otimes \mu) = \{f \lambda, g\mu \} -fg \{\lambda, \mu\} - f X_\lambda (g) \mu + g X_\mu (f)\lambda = \left( X_{f\lambda} (g) -f X_\lambda (g)\right) \mu,
\end{equation}
where $f,g \in C^\infty (M)$, and $\lambda, \mu \in \Gamma (L)$.

The skew-symmetric form $\Lambda_J$ determines an obvious morphism of vector bundles
$ \Lambda_J^\# : T^\ast M \otimes L \to TM$, implicitly defined by
$
\langle \Lambda_J^\# (\eta \otimes \lambda) , \theta \rangle \mu := \Lambda_J (\eta \otimes \lambda , \theta \otimes \mu)
$,
where $\eta, \theta \in \Omega^1 (M)$, $\lambda, \mu \in \Gamma (L)$, and $\langle -,- \rangle$ is the duality pairing. In other words,
\begin{equation}\label{eq:Lambdash}
	\Lambda_J^\# (df \otimes \lambda) = X_{f\lambda} -f X_\lambda,
\end{equation}
$f \in C^\infty (M)$, $\lambda \in \Gamma (L)$. The morphism $\Lambda^\#_J$ can be alternatively defined as follows. Recall that $\der L$ projects onto $TM$ via the symbol $\sigma$. It is easy to see that the diagram
\[
\xymatrix{ T^\ast M \otimes L \ar[r]^-{\Lambda_J^\#} \ar[d]_-{\gamma} & TM \\
	J^1 L \ar[r]^-{J^\#} & \der L \ar[u]_-{\sigma}}
\]
commutes, i.e.~$\Lambda^\#_J = \sigma \circ J^\# \circ \gamma$, which can be used as an alternative definition of $\Lambda^\#_J$. Finally, note that
\[
(J ^\# \circ \gamma)(df \otimes \lambda) = \Delta_{f\lambda} -f \Delta_\lambda.
\]

\subsection{Jacobi algebroid associated with a Jacobi manifold}

\begin{definition}\label{def:jacb} A \emph{Jacobi algebroid} is a pair $(A,L)$ where $A \to M$ is a Lie algebroid, and $L \to M$ is a line bundle equipped with a representation of $A$.
\end{definition}

\begin{remark}
	Jacobi algebroids are equivalent to Grabowski's \emph{Kirillov algebroids} \cite[Section 8]{G2013}.
\end{remark}

Let $A \to M$ be a Lie algebroid with anchor $\rho$ and Lie bracket $[-,-]_A$, and let $E \to M$ be a vector bundle equipped with a representation of $A$. In the following we denote by $(\Gamma (\wedge^\bullet A^\ast), d_A)$ the de Rham complex of $A$ and by $(\Gamma (\wedge^\bullet A^\ast \otimes E), d_{A, E})$ the \emph{de Rham complex of $A$ with values in $E$}. Its cohomology, the \emph{de Rham cohomology of $A$ with values in $E$}, will be denoted by $H(A,E)$.

Now, let $M$ be a manifold and let $L \to M$ be a line bundle. Denote by $J_1 L$ the dual bundle of $J^1 L$. Sections of $J_1 L$ are first order differential operators $\Gamma (L) \to C^\infty (M)$. Moreover, denote by $\Der^\bullet L = \Gamma (\wedge^\bullet J_1 L \otimes L)$ the space of alternating, first order multi-differential operators $\Gamma (L) \times \cdots \times \Gamma (L) \to \Gamma (L)$ (see Appendix \ref{sec:app_0} for more details).

\begin{example}\label{ex:jacb2} (cf.~\cite[Theorem 1]{KS1993}, \cite[(2.7)]{IM2000}, \cite[Theorem 13]{GM2001}) Let $(M, L, J \equiv \{-,-\})$ be a Jacobi manifold. It is not hard to see (see, e.g., \cite{CS2013}) that there is a unique Jacobi algebroid structure on $(J^1 L, L)$ with anchor $\rho_J$, Lie bracket $[-,-]_J$, and flat $J^1 L$-connection $\nabla^J$ in $L$ such that
	\begin{align}
		\rho_J (j^1 \lambda) & = X_\lambda, \nonumber \\
		[j^1 \lambda , j^1 \mu]_{J} & = j^1 \{\lambda , \mu \}, \label{eq:jaclie2}\\
		\nabla^J_{j^1 \lambda} \mu & = \{\lambda , \mu \}, \nonumber
	\end{align}
	for all $\lambda, \mu \in \Gamma (L)$. If $\psi, \chi \in \Gamma (J^1 L)$ are generic sections, we have
	\[
	\rho_J (\psi) = \sigma (J^\sharp \psi)
	\] 
	and
	\begin{equation}\label{eq:Liej}
		[\psi, \chi]_J = \Ll_{J^\sharp \psi} \chi - \Ll_{J^\sharp \chi} \psi - j^1 J(\psi, \chi).
	\end{equation}
\end{example} 

\begin{lemma}\label{lem:GM1} Let $J \in \Der^2 L$ be an alternating, first order bi-differential operator: $J: \Gamma (L) \times \Gamma (L) \to \Gamma (L)$. Then
	\begin{enumerate}
		\item for all $\lambda, \mu \in \Gamma (L)$,
		\begin{equation}\label{eq:Jlm}
			J(\lambda, \mu) = - [[J, \lambda] ^{SJ}, \mu]^{SJ}.
		\end{equation}
		\item (cf. \cite[Theorem 1.b, (28), (29)]{GM2001}) $J$ is a Jacobi bracket, i.e.~it defines a Lie algebra structure on $\Gamma (L)$ iff
		\begin{equation}
			[J,J]^{SJ} = 0, \label{eq:gm2}
		\end{equation}
	\end{enumerate}
	where $[-,-]^{SJ}$ is the Schouten-Jacobi bracket (see Appendix \ref{sec:app_0}).
\end{lemma}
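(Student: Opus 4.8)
The plan is to derive both statements from the basic algebraic properties of the Schouten-Jacobi bracket $[-,-]^{SJ}$ on the graded Lie algebra $(\Der^\bullet L)[1]$, as recalled in Example \ref{ex:jacb1} and Appendix \ref{sec:app}. The key facts I would use are: (i) $\Der^\bullet L = \Gamma(\wedge^\bullet J_1 L \otimes L)$ carries the structure of a graded Lie algebra under $[-,-]^{SJ}$, with the grading shifted so that $\lambda \in \Gamma(L) = \Der^{-1} L$ has degree $-1$ and a bi-differential operator $J$ has degree $+1$; (ii) for $\lambda \in \Gamma(L)$ and $P \in \Der^k L$, the bracket $[P,\lambda]^{SJ}$ is (up to sign) the "insertion" of $\lambda$ into one argument of $P$, i.e.\ a partial evaluation; and (iii) $[-,-]^{SJ}$ satisfies graded antisymmetry and the graded Jacobi identity. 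These are exactly the statements collected in the appendix, so I would cite them rather than reprove them.

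For part (1), the plan is a direct unwinding of the definition of $[-,-]^{SJ}$ on low-degree elements. For $J \in \Der^2 L$ a bi-differential operator and $\lambda \in \Gamma(L)$, the element $[J,\lambda]^{SJ} \in \Der^1 L$ is a first order differential operator; I would check from the explicit formula for the Schouten-Jacobi bracket that $[J,\lambda]^{SJ} = \pm\{\lambda,-\} = \pm\Delta_\lambda$, i.e.\ that bracketing with $\lambda$ plugs $\lambda$ into the first slot of $J$. Iterating, $[[J,\lambda]^{SJ},\mu]^{SJ}$ plugs $\mu$ into the remaining slot, yielding $\pm J(\lambda,\mu)$; tracking the Koszul signs carefully gives exactly the minus sign in \eqref{eq:Jlm}. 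This is the routine bookkeeping step and poses no conceptual difficulty once the sign conventions of the appendix are fixed.

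For part (2), I would argue as follows. By the graded Jacobi identity for $[-,-]^{SJ}$ applied to $J, \lambda, \mu, \nu$ (each of $\lambda,\mu,\nu$ in degree $-1$ and $J$ in degree $+1$), and using part (1) to rewrite iterated brackets $[[J,-]^{SJ},-]^{SJ}$ as values of $J$, one obtains that the Jacobiator of the bracket $\{-,-\} = J(-,-)$ on $\Gamma(L)$, namely $\{\{\lambda,\mu\},\nu\} + \text{cyclic}$, equals (up to an overall nonzero constant) $[[[\,[J,J]^{SJ},\lambda]^{SJ},\mu]^{SJ},\nu]^{SJ}$, i.e.\ the fourfold partial evaluation of $[J,J]^{SJ} \in \Der^3 L$ on $\lambda,\mu,\nu$. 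The "if" direction is then immediate: $[J,J]^{SJ}=0$ forces the Jacobiator to vanish, so $\{-,-\}$ is a Lie bracket (it is alternating and a first order bi-differential operator by hypothesis, hence a Jacobi bracket). For the "only if" direction I would note that a degree $+3$ multi-differential operator $Q \in \Der^3 L$ is determined by its values $Q(\lambda,\mu,\nu)$ on triples of sections (this is the analogue, for the bundle $\wedge^3 J_1 L \otimes L$, of the fact that a multi-derivation is determined by its action on sections — see Appendix \ref{sec:app}), so if all the partial evaluations of $[J,J]^{SJ}$ vanish then $[J,J]^{SJ}=0$ itself. Alternatively, and perhaps more cleanly, since $[J,J]^{SJ}$ is a \emph{derivation} in each argument, it suffices that $[[[\,[J,J]^{SJ},\lambda]^{SJ},\mu]^{SJ},\nu]^{SJ} = 0$ for all $\lambda,\mu,\nu$ to conclude $[J,J]^{SJ}=0$.

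The main obstacle is entirely in the sign- and normalization-tracking: getting part (1) with the correct sign and, in part (2), pinning down the precise nonzero scalar relating the graded-Jacobi expression for $[[[\,[J,J]^{SJ},\lambda]^{SJ},\mu]^{SJ},\nu]^{SJ}$ to the cyclic Jacobiator of $\{-,-\}$ — the factor hidden in $[J,J]^{SJ}$ versus $\sum_{\mathrm{cyc}}\{\{\lambda,\mu\},\nu\}$. I would handle this by fixing once and for all the conventions of Appendix \ref{sec:app} (the factor in the definition of $[-,-]^{SJ}$ and the degree shift $[1]$), computing the $L=\R_M$ case against the classical identity $[\Lambda+\Gamma\wedge\id,\Lambda+\Gamma\wedge\id]^{SJ}$ from \cite{GM2001} as a consistency check, and then transporting the computation verbatim to the general line bundle via local triviality (Remark \ref{rem:standj}), since all operations involved are local and the statement is pointwise in nature.
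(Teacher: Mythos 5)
Your proposal is correct, and part (1) follows the paper's own route: the paper likewise disposes of (\ref{eq:Jlm}) by unwinding the explicit Gerstenhaber-type formula for $[-,-]^{SJ}$ from Appendix \ref{sec:app}, exactly as you plan. Where you diverge is part (2): the paper does not prove it at all but cites it as a particular case of Theorem 3.3 of \cite{LMS1992} (a Nijenhuis--Richardson/graded-algebra result), whereas you give a self-contained derived-bracket argument — using that $\Gamma(L)$ sits in degree $-1$ as an abelian piece of $(\Der^\bullet L)[1]$ (brackets of two sections vanish for degree reasons), the graded Jacobi identity, and part (1), to identify the Jacobiator of $\{-,-\}$ with the triple partial evaluation of $[J,J]^{SJ}$, and then observing that an element of $\Der^3 L$ vanishes iff all its evaluations on triples of sections do. This is sound, and your worry about signs is the only real work; note that you can shortcut even further by evaluating $[J,J]^{SJ}$ directly with the Gerstenhaber formula of Appendix \ref{sec:app}, which gives $[J,J]^{SJ}(\lambda,\mu,\nu) = -2\left(J(J(\lambda,\mu),\nu) - J(J(\lambda,\nu),\mu) + J(J(\mu,\nu),\lambda)\right)$, i.e.\ $-2$ times the Jacobiator, making both implications of (\ref{eq:gm2}) immediate without invoking the graded Jacobi identity at all. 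What each approach buys: the paper's citation is shorter and places the lemma in the general multi-graded framework of \cite{LMS1992}; your argument keeps the proof internal to the conventions of Appendix \ref{sec:app}, which is arguably preferable here since the sign and normalization conventions (the shift $[1]$, the factor in $[-,-]^{SJ}$) are precisely what one must match against the external reference anyway. Your final consistency check via the trivial-bundle case and (\ref{eq:jac}) is harmless but not needed, as the computation is already global and coordinate-free.
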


\begin{proof} The first assertion is a consequence of the explicit form of the\linebreak Schouten-Jacobi bracket. The second assertion is a particular case of Theorem 3.3 in \cite{LMS1992}.
\end{proof}

\begin{remark}\label{rem:multi_trivial}
	Denote by $\mathfrak{X}^\bullet (M) = \bigoplus_k \mathfrak{X}^k (M)$ the space of (skew-sym\-met\-ric) multi-vector fields on $M$. When $L = \R_M := M \times \R$, the trivial line bundle, then the space $\Der^{k+1} L$ of alternating first order multi-differential operators on $\Gamma (L)$ with $k+1$ entries, identifies with $\mathfrak X^{k+1} (M) \oplus \mathfrak X^k(M)$ (see Appendix \ref{sec:app_0}). In particular, an alternating, first order bi-differential operator $J$ identifies with a pair $(\Lambda, \Gamma)$ where $\Lambda$ is a bi-vector field and $\Gamma$ is a vector field on $M$. In this case, Equation (\ref{eq:gm2}) is equivalent to
	\[
	[\Gamma, \Lambda]^{SN} = 0 \quad \text{and} \quad [\Lambda,\Lambda]^{SN} = 2 \Lambda \wedge \Gamma
	\]
	where $[-,-]^{SN}$ is the Schouten-Nijenhuis bracket on multi-vectors.
\end{remark}

\begin{remark}\label{rem:cohalg} Let $(M, \pi)$ be a Poisson manifold, with Poisson bi-vector $\pi$, and Poisson bracket $\{-,-\}_\pi$. The differential $d_\pi := [\pi, -]^{SN} : \mathfrak{X}^\bullet (M) \to \mathfrak{X}^\bullet (M)$ has been introduced by Lichnerowicz. The cohomology of $(\mathfrak{X}^\bullet (M), d_\pi)$ is the Lichnerowicz-Poisson cohomology of $(M, \pi)$. For more general Jacobi manifolds $(M, L, J \equiv \{-,-\})$
	it is natural to replace multi-vectors with 
	multi-differential operators, i.e.~elements of $ \Der^\bullet L$, and the Lichnerowicz-Poisson differential by the differential $d_J := [J, -]^{SJ}$. The resultant cohomology is called the \emph{Chevalley-Eilenberg cohomology} of $(M, L, \{-,-\})$ \cite{GL1984, Lich1978}.
	Furthermore, the action of $ (\Der^\bullet L)[1]$ on $ \Gamma (\wedge^\bullet J_1 L)$ (see Appendix \ref{sec:app_0}) gives rise to another cohomology, namely the cohomology of the complex $( \Gamma (\wedge^\bullet J_1 L), X_J)$, also called the \emph{Lichnerowicz-Jacobi cohomology of $(M, L, \{-,-\})$} (see, e.g., \cite{LLMP1999}). It is easy to see that the complex $( \Gamma (\wedge^\bullet J_1 L), X_J)$ is nothing but the de Rham complex of the Lie algebroid $(J^1 L , \rho_J, [-,-]_J)$. Similarly, the complex $( \Der^\bullet L, d_J)$ is the de Rham complex of $(J^1 L , \rho_J, [-,-]_J)$ with values in $L$.
\end{remark}

\subsection{Morphisms of Jacobi manifolds}
Let $(M_1, L_1, \{-,-\}_1)$ and $(M_2, L_2, \{-,-\}_2)$ be Jacobi manifolds
\begin{definition}
	\label{definition:Jacobi_mfd_morphism}
	A \emph{morphism of Jacobi manifolds}, or a \emph{Jacobi map},
	\[
	(M_1,L_1,\{-,-\}_1) \rightarrow (M_2,L_2,\{-,-\}_2)
	\]
	is a vector bundle morphism $\phi:L_1\to L_2$, covering a smooth map $\underline{\phi}: M_1 \to M_2$, such that $\phi$ is an isomorphism on fibers, and $ \phi^\ast \{ \lambda, \mu \}_2 = \{ \phi^\ast \lambda , \phi^\ast \mu \}_1$ for all $\lambda,\mu \in\Gamma(L_2)$.
\end{definition}

\begin{definition}
	\label{def:Jacobi_mfd_imorphism}
	An \emph{infinitesimal automorphism}, or a \emph{Jacobi derivation}, of a Jacobi manifold $(M,L,\{-,-\})$ is a derivation $\Delta$ of the line bundle $L$, equivalently, a section of the gauge algebroid $\der L$ of $L$, such that $\Delta$ generates a flow by automorphisms of $(M, L, \{-,-\})$  (see Appendix \ref{sec:app_0}). A \emph{Jacobi vector field} is the symbol of a Jacobi derivation.
\end{definition}

\begin{remark}\label{rem:inf_aut}
	Let $\Delta$ be a derivation of $L$, let $\{ \varphi_t \}$ be its flow, and let $\square$ be a first order multi-differential operator on $L$ with $k $ entries, i.e.~$\square \in \Der^k L$. It is easy to see that (similarly as for vector fields)
	\begin{equation}\label{Lie1}
		\left. \frac{d}{dt} \right |_{t=0} (\varphi_t)_\ast \square = [\square, \Delta]^{SJ}
	\end{equation}
	where $\varphi_\ast \square$ denotes the \emph{push forward} of $\square $ along a line bundle isomorphism $\varphi : L \to L^\prime$, defined by
	$(\varphi_\ast \square) (\lambda_1^\prime, \dots, \lambda_{k}^\prime) := (\varphi^{-1})^\ast (\square (\varphi^\ast \lambda_1^\prime, \dots, \varphi^\ast \lambda_{k}^\prime))$, for all $\lambda_1^\prime, \dots, \lambda_k^\prime \in \Gamma (L^\prime)$ (see also Appendix \ref{sec:app_0} about pushing forward derivations along vector bundle morphisms). In particular, $\Delta$ is an infinitesimal automorphism of $(M,L,\{-,-\})$ if and only if $[J, \Delta]^{SJ} = 0$. Since
	\begin{equation}\label{eq:DeltaJ}
		[J, \Delta]^{SJ} (\lambda, \mu ) = \{ \Delta \lambda , \mu\} + \{ \lambda , \Delta \mu\} - \Delta \{\lambda , \mu \},
	\end{equation}
	we conclude that $\Delta$ is an infinitesimal automorphism of $(M,L,\{-,-\})$ iff
	\begin{equation}
		\Delta \{ \lambda, \mu \} = \{\Delta \lambda , \mu \} + \{ \lambda , \Delta \mu\}
	\end{equation}
	for all $\lambda, \mu \in \Gamma (L)$. In other words $\Delta$ is a \emph{derivation} of the Jacobi bracket.
\end{remark}

\begin{remark}
	More generally, let $\{ \Delta_t \}$ be a one parameter family of derivations of $L$, generating the one parameter family of automorphisms $\{ \varphi_t \}$, and let $\square \in \Der^\bullet L$. Then
	\begin{equation}\label{Lie2}
		\frac{d}{dt} (\varphi_t)_\ast \square = [(\varphi_t)_\ast \square, \Delta_t]^{SJ}.
	\end{equation}
\end{remark}

\begin{remark}\label{rem:conf}
	Definitions \ref{definition:Jacobi_mfd_morphism} and \ref{def:Jacobi_mfd_imorphism} encompass the notions of \emph{conformal morphisms} and \emph{infinitesimal conformal automorphisms} of standard Jacobi manifolds, respectively. In particular two standard Jacobi structures are \emph{conformally equivalent} if and only if they are isomorphic as Jacobi structures.
\end{remark}

Let $(M,L, J \equiv \{-,-\})$ be a Jacobi manifold and $\lambda \in \Gamma (L)$. Note that
\begin{equation}
	\Delta_\lambda = \{ \lambda , -\} = - [ J , \lambda]^{SJ}. \label{eq:trianglelambda}
\end{equation}
The Jacobi identity for the Jacobi bracket immediately implies that not only $\Delta_\lambda$ is a derivation of $L$, but even more, it is an infinitesimal automorphism of $(M,L,\{-,-\})$, called the \emph{Hamiltonian derivation associated with the section $\lambda$}. Similarly, the symbol $X_\lambda$ of $\Delta_\lambda$ will be called the \emph{Hamiltonian vector field associated with $\lambda$}. Clearly we have
\begin{equation}\label{eq:iso}
	[\Delta_\lambda , \Delta_\mu] = \Delta_{\{\lambda, \mu\}}, \quad \text{and} \quad [X_\lambda , X_\mu] = X_{\{\lambda, \mu\}},
\end{equation}
for all $\lambda, \mu \in \Gamma (L)$. Jacobi automorphisms $L \to L$ generated by Hamiltonian derivations will be called \emph{Hamiltonian automorphisms}. Similarly, diffeomorphisms $M \to M$ generated by Hamiltonian vector fields will be called \emph{Hamiltonian diffeomorphisms}.

\begin{example}\label{ex:jmap} Let $(M,L,\{-,-\})$ be a Jacobi manifold. The values of all Hamiltonian vector fields generate a distribution $\Kk \subset TM$ which is, generically, non-constant-dimensional. Distribution $\Kk$ is called \emph{the characteristic distribution} of $(M, L, \{-,-\})$.
	The Jacobi manifold $(M, L, \{-,-\})$ is said to be \emph{transitive} if its characteristic distribution $\Kk$ is the whole tangent bundle $TM$. Identity (\ref{eq:iso}) implies that $\Kk$ is involutive. Moreover, it is easy to see that $\Kk$ is constant-dimensional along the flow lines of a Hamiltonian vector field. Hence, it is completely integrable in the sense of Stefan and Sussmann. In particular, it defines a (singular) foliation, also denoted $\Kk$. Each leaf $\Cc$ of $\Kk$, is called a \emph{characteristic leaf} and possesses a unique transitive Jacobi structure defined by the restriction of the Jacobi bracket to $L|_{\Cc}$, see Corollary~\ref{cor:char}.2 for a precise expression. In other words, the inclusion $L|_\Cc \INTO L$ is a Jacobi map.
	Moreover, a transitive Jacobi manifold $(M, L, \{-,-\})$ is either an l.c.s.~manifold (if $\dim M$ is even) or a contact manifold (if $\dim M$ is odd) \cite{Kiri1976}.
\end{example} 

\section[Coisotropic submanifolds in Jacobi manifolds and their invariants]{Coisotropic submanifolds in Jacobi manifolds and their invariants}\label{sec:coiso}

In this section we propose some equivalent characterizations of coisotropic submanifolds $S$ in a Jacobi manifold $(M, L, \{-,-\})$ (Lemma \ref{lem:cois}, Corollary~\ref{cor:char}.(3)). Then we establish a one-to-one correspondence between coiso\-tropic submanifolds of $(M, L, \{-,-\})$ and certain Jacobi subalgebroids of the Jacobi algebroid $(J^1 L, L)$ (Proposition \ref{prop:conormal}). In particular, this yields a natural $L_\infty$-isomorphism class of $L_\infty$-algebras associated with each coiso\-tropic submanifold (Proposition \ref{prop:linfty} and Proposition \ref{prop:gauge_invariance}).

\subsection{Differential geometry of a coisotropic submanifold}

Let $(M, L, J \equiv \{-,-\})$ be a Jacobi manifold, and let $x \in M$. A subspace $T \subset T_xM$ is said to be \emph{coisotropic} (with respect to~the Jacobi structure $(L,J \equiv \{-,-\})$), if $\Lambda_J ^\# (T^0 \otimes L_{{x}}) \subset T$, where $T^0 \subset T^*_xM$ denotes the annihilator of $T$ (cf.~\cite[Definition 4.1]{ILMM1997}). Equivalently, $T^0 \otimes L_{{x}}$ is isotropic with respect to~the $L$-valued bi-linear form $\Lambda_J$.

A submanifold $S\subset M$ is called \emph{coisotropic} (with respect to~the Jacobi structure $(L,J \equiv \{-,-\})$), if its tangent space $T_xS$ is coisotropic for all $ x \in S$.

\begin{lemma}\label{lem:cois}
	Let $S \subset M$ be a submanifold, and let $\Gamma_S$ denote the set of sections $\lambda$ of the Jacobi bundle such that $\lambda |_S = 0$. The following three conditions are equivalent:
	\begin{enumerate}
		\item \label{1} $S$ is a coisotropic submanifold,
		\item \label{2} $\Gamma_S$ is a Lie subalgebra in $\Gamma (L)$,
		\item \label{3} $X_\lambda$ is tangent to $S$, for all $\lambda \in \Gamma_S$.
	\end{enumerate}
\end{lemma}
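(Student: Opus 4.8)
The plan is to prove the cyclic chain of implications $(1)\Rightarrow(3)\Rightarrow(2)\Rightarrow(1)$, working entirely at the level of sections of $L$ and using the formula \eqref{eq:Lambdash} for $\Lambda_J^\#$ together with the Leibniz rule of Remark \ref{rem:jalgebra}. The key local observation, to be established first, is that for a submanifold $S$ the conormal module $N^\ast S \otimes L$ (that is, the bundle $T^0$ of Definition~above, tensored with $L$) is spanned, near any point of $S$, by elements of the form $df \otimes \lambda$ with $f \in C^\infty(M)$ vanishing on $S$ and $\lambda \in \Gamma(L)$ arbitrary; equivalently, every $L$-valued one-form annihilating $TS$ is, locally and modulo forms vanishing along $S$, a $C^\infty(M)$-combination of such $df\otimes\lambda$. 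Granting this, by \eqref{eq:Lambdash} the coisotropicity condition $\Lambda_J^\#(T^0\otimes L)\subset TS$ becomes the requirement that $X_{f\lambda} - f X_\lambda$ be tangent to $S$ along $S$, for all $f$ vanishing on $S$ and all $\lambda$.

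For $(1)\Rightarrow(3)$: take $\lambda\in\Gamma_S$, so $\lambda|_S=0$; locally write $\lambda = \sum_i f_i \mu_i$ with $f_i\in C^\infty(M)$ vanishing on $S$ and $\mu_i\in\Gamma(L)$ (possible because $\Gamma_S$ is, locally, generated over $C^\infty(M)$ by one section times functions cutting out $S$, using a local trivialization of $L$). Then $X_\lambda = \sum_i X_{f_i\mu_i}$, and along $S$ one has $X_{f_i\mu_i} = (X_{f_i\mu_i} - f_i X_{\mu_i})$ since $f_i|_S=0$; the right-hand side is $\Lambda_J^\#(df_i\otimes\mu_i)$ by \eqref{eq:Lambdash}, which is tangent to $S$ by coisotropicity. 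Hence $X_\lambda$ is tangent to $S$ along $S$. For $(3)\Rightarrow(2)$: $\Gamma_S$ is visibly a $C^\infty(M)$-submodule, so one only needs closure under the bracket. For $\lambda,\mu\in\Gamma_S$ and any $x\in S$, evaluate $\{\lambda,\mu\}(x)=\widehat\Lambda_J(j^1\lambda,j^1\mu)(x)$; since $\mu|_S=0$, the $1$-jet $j^1\mu$ at $x$ is determined by $d\mu$ along conormal directions, and using the Leibniz rule $\{\lambda, f\mu\} = f\{\lambda,\mu\} + X_\lambda(f)\mu$ one reduces to showing $X_\lambda(f)|_S = 0$ whenever $f|_S=0$ — which is exactly the statement that $X_\lambda$ is tangent to $S$. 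So $\{\lambda,\mu\}|_S=0$.

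For $(2)\Rightarrow(1)$: assume $\Gamma_S$ is a Lie subalgebra; we must show $\Lambda_J^\#(T_x^0\otimes L)\subset T_xS$ for $x\in S$. Given a conormal covector, represent it locally as $df\otimes\lambda$ with $f|_S=0$; then $\Lambda_J^\#(df\otimes\lambda) = X_{f\lambda}-fX_\lambda$, and evaluated at $x\in S$ this equals $X_{f\lambda}(x)$. Now for any $g\in C^\infty(M)$ with $g|_S=0$ we have $f\lambda\in\Gamma_S$ and $g\lambda\in\Gamma_S$ — wait, rather we test against functions: we need $X_{f\lambda}(x)\in T_xS$, equivalently $X_{f\lambda}(g)(x)=0$ for all $g$ vanishing on $S$. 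From $\{f\lambda, g\lambda\}\in\Gamma_S$ and the bracket expansion via Leibniz, the term $X_{f\lambda}(g)\,\lambda$ appears and all other terms vanish at $x$ (using $f|_S=g|_S=0$ and a local trivialization), forcing $X_{f\lambda}(g)(x)=0$. This closes the loop.

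The main obstacle I anticipate is the local generation statement in the first paragraph — namely that $\Gamma_S$, as a $C^\infty(M)$-module, is locally generated by products $f\lambda$ with $f$ in the vanishing ideal of $S$ and $\lambda$ a local frame of $L$, and correspondingly that conormal $L$-valued one-forms are locally spanned by $df\otimes\lambda$. This is elementary given that $S$ is a closed submanifold (so its vanishing ideal is locally generated by coordinate functions) and $L$ is locally trivial, but it must be stated carefully because several of the implications silently depend on it; once it is in place, each implication is a short computation with \eqref{eq:Lambdash} and the Leibniz rule, with no serious analytic content. A mild subtlety is that all identities only hold \emph{along} $S$ (pointwise on $S$), not on a neighborhood, so one should phrase tangency as a condition on the restriction to $S$ throughout.
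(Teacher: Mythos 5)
Your proof is correct and follows essentially the same route as the paper: reduce to a local trivialization of $L$ so that $\Gamma_S = I(S)\cdot\Gamma(L)$, then run the identity (\ref{eq:LambdaJ})/(\ref{eq:Lambdash}) together with the Leibniz rule along $S$. The only difference is organizational — you prove the cyclic chain $(1)\Rightarrow(3)\Rightarrow(2)\Rightarrow(1)$ where the paper proves $(1)\Leftrightarrow(2)$ and $(2)\Leftrightarrow(3)$ directly — and the computations are the same.
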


\begin{proof}
	Let $S \subset M$ be a submanifold. We may assume, without loss of generality, that $L$ is trivial. Then $\Gamma_S=I(S)\cdot\Gamma(L)$, where $I(S)$ denotes the ideal in $C^\infty(M)$ consisting of functions that vanish on $S$. In particular, if $\lambda$ is a generator of $\Gamma (L)$, then every section in $\Gamma_S$ is of the form $f \lambda$ for some $f \in I(S)$. Now, let $f,g \in I(S)$. Putting $\mu = \lambda$ in (\ref{eq:LambdaJ}) and restricting to $S$, we find
	\[
	\{ f\lambda, g\lambda \}|_S = \langle \Lambda_J^\# (df \otimes \lambda) , dg \rangle \lambda |_S .
	\]
	This shows that $(1) \Longleftrightarrow (2)$. The equivalence $(2) \Longleftrightarrow (3)$ follows from the identity $X_\lambda(f)\mu|_S=\{\lambda,f\mu\}|_S$, for all $\lambda\in\Gamma_S$, $\mu\in\Gamma(L)$, and $f\in I(S)$.
\end{proof}

Now, let $S \subset M$ be a coisotropic submanifold and let $T^0 S \subset T^\ast M |_S$ be the annihilator of $TS$. The (generically non constant-dimensional) distribution $\mathcal{K}_S := \Lambda^\#_J (T^0 S \otimes L) \subset TS$ on $S$ is called the \emph{characteristic distribution of $S$}.

\begin{remark}
	In view of (\ref{eq:Lambdash}), $\Kk_S$ is generated by the (restrictions to $S$ of) the Hamiltonian vector fields of the kind $X_\lambda$, with $\lambda \in \Gamma_S$.
\end{remark}

From Lemma \ref{lem:cois} one can easily derive the following

\begin{corollary}\label{cor:char}\
	\begin{enumerate}
		\item (cf.~\cite[\S 2]{CF2007}) The characteristic distribution $\mathcal K_S$ of any coisotropic submanifold $S$ is integrable (hence, it determines a foliation on $S$, called the \emph{characteristic foliation} of $S$).
		
		\item (cf.~\cite{Kiri1976}) Every characteristic leaf $\Cc$, i.e.~any leaf of the characteristic distribution $\Kk = \Kk_M$ has an induced Jacobi structure $(L|_\Cc, \{-,-\}_\Cc)$ well-defined by $\{ \lambda |_{\Cc}, \mu|_{\Cc}\}_\Cc= \{ \lambda, \mu \}|_C $, for all $\lambda, \mu \in \Gamma (L)$. The induced Jacobi structure is transitive.
		
		\item A submanifold $S \subset M$ is coisotropic, if and only if $TS \cap T\Cc$ is coisotropic in the tangent bundle $T\Cc$, for all characteristic leaves $\Cc$ intersecting $S$, where $\Cc$ is equipped with
		the induced Jacobi structure.
	\end{enumerate}
\end{corollary}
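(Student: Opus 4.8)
The plan is to establish the three assertions in turn, the only genuinely new work being the linear algebra behind part (3); parts (1) and (2) fall out quickly from Lemma~\ref{lem:cois}, from~(\ref{eq:iso}), and from the fact that Hamiltonian vector fields are tangent to characteristic leaves.

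\emph{Part (1).} By the Remark preceding the statement, $\Kk_S$ is spanned on $S$ by the family $\Ff := \{X_\lambda|_S : \lambda \in \Gamma_S\}$; note that for $\lambda \in \Gamma_S$ and $f \in C^\infty(M)$ one has $X_{f\lambda}|_S = f\,X_\lambda|_S$, because $\Lambda_J^\#(df\otimes\lambda)|_S = \Lambda_J^\#\big((df\otimes\lambda)|_S\big) = 0$ when $\lambda|_S = 0$ (use (\ref{eq:Lambdash})), so $\Ff$ generates $\Kk_S$ as a $C^\infty(S)$-module. By Lemma~\ref{lem:cois}, $\Gamma_S$ is a Lie subalgebra of $(\Gamma(L),\{-,-\})$, and (\ref{eq:iso}) gives $[X_\lambda,X_\mu] = X_{\{\lambda,\mu\}}$ with $\{\lambda,\mu\}\in\Gamma_S$, so $\Ff$ is involutive. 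Moreover each $X_\lambda$, $\lambda\in\Gamma_S$, is tangent to $S$ (Lemma~\ref{lem:cois}.(\ref{3})), its flow is the restriction to $S$ of a Jacobi automorphism, and such automorphisms preserve $\Kk_S = \Lambda_J^\#(T^0 S\otimes L)$; hence $\Kk_S$ is invariant under the flows of $\Ff$, and by the Stefan--Sussmann theorem it is integrable, by the same argument as for $\Kk$ in Example~\ref{ex:jmap}.

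\emph{Part (2).} The content is the well-definedness of $\{-,-\}_\Cc$: since $\{-,-\}$ is a bi-differential operator and $\Cc$ is an initial submanifold, we may work in a neighbourhood in which sections of $L|_\Cc$ extend to $M$, and, by skew-symmetry, it suffices to show $\{\lambda,\mu\}|_\Cc = 0$ whenever $\lambda|_\Cc = 0$. Write $\{\lambda,\mu\} = -\Delta_\mu\lambda$; in a local frame $\lambda = f\lambda_0$ with $f|_\Cc = 0$, so $\Delta_\mu\lambda = X_\mu(f)\lambda_0 + f\,\Delta_\mu\lambda_0$. Since $X_\mu$ is a section of $\Kk$ it is tangent to the leaf $\Cc$, whence $X_\mu(f)|_\Cc = 0$, and also $f|_\Cc = 0$; thus $(\Delta_\mu\lambda)|_\Cc = 0$. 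The bracket $\{-,-\}_\Cc$ so defined inherits skew-symmetry, the Jacobi identity and the first-order bi-differential property from $\{-,-\}$, so $(L|_\Cc,\{-,-\}_\Cc)$ is an abstract Jacobi structure. The same computation gives $\Delta^\Cc_{\lambda|_\Cc} = \Delta_\lambda|_\Cc$, hence $X^\Cc_{\lambda|_\Cc} = X_\lambda|_\Cc$; since locally every section of $L|_\Cc$ is a restriction, the Hamiltonian vector fields of $\{-,-\}_\Cc$ span, at $x\in\Cc$, the space $\Kk_x = T_x\Cc$, so $(L|_\Cc,\{-,-\}_\Cc)$ is transitive.

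\emph{Part (3).} Fix $x\in S$ and let $\Cc$ be the characteristic leaf through $x$; put $V := T_x\Cc = \Kk_x$, $W := T_xS$, and write $\Lambda_\Cc$, $\Lambda_\Cc^\#$ for the symbol of the induced structure on $\Cc$ at $x$. Two facts about $\Lambda_J$ at $x$ are needed: (a) $\operatorname{im}\Lambda_J^\#|_x\subseteq V$, immediate from (\ref{eq:Lambdash}) since Hamiltonian vector fields span $\Kk$; and (b) $\Lambda_J^\#(V^0\otimes L_x) = 0$, which follows from (a) and skew-symmetry of $\Lambda_J$, as $\Lambda_J(\eta\otimes\lambda,\theta\otimes\mu) = -\langle\Lambda_J^\#(\theta\otimes\mu),\eta\rangle\lambda = 0$ when $\eta\in V^0$. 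By (b), $\Lambda_J$ descends to an $L_x$-valued form on $T_x^\ast\Cc\otimes L_x = V^\ast\otimes L_x$, and a direct computation with (\ref{eq:LambdaJ}) together with $X^\Cc_{\lambda|_\Cc} = X_\lambda|_\Cc$ identifies this descended form with $\Lambda_\Cc$. Since the annihilator of $W\cap V$ in $V^\ast$ lifts under $T_x^\ast M\twoheadrightarrow V^\ast$ to $(W\cap V)^0 = W^0 + V^0\subseteq T_x^\ast M$, we get, using (b),
\[
\Lambda_\Cc^\#\big((W\cap V)^0\otimes L_x\big)\;=\;\Lambda_J^\#\big((W^0+V^0)\otimes L_x\big)\;=\;\Lambda_J^\#(W^0\otimes L_x),
\]
the annihilator on the left being taken in $V^\ast$. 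By (a) this common set lies in $V$, so the inclusions $\Lambda_J^\#(W^0\otimes L_x)\subseteq W$ and $\Lambda_J^\#(W^0\otimes L_x)\subseteq W\cap V$ are equivalent; the first says that $S$ is coisotropic at $x$, the second that $T_xS\cap T_x\Cc$ is coisotropic in $T_x\Cc$. Letting $x$ range over $S$ — equivalently, $\Cc$ over the characteristic leaves meeting $S$ and $x$ over $S\cap\Cc$ — yields the claimed equivalence. I expect the technical heart to be this part: verifying (a), (b) and, above all, identifying $\Lambda_\Cc$ with the form obtained from $\Lambda_J$ by descent, so that the annihilator bookkeeping $(W\cap V)^0 = W^0+V^0$ can be brought to bear; parts (1) and (2) are routine given Lemma~\ref{lem:cois} and the tangency of Hamiltonian vector fields to characteristic leaves.
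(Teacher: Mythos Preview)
Your proof is correct and follows essentially the same route as the paper: parts (1) and (2) use Lemma~\ref{lem:cois}, the identity $[X_\lambda,X_\mu]=X_{\{\lambda,\mu\}}$, and tangency of Hamiltonian vector fields to leaves, while part (3) rests on the same linear-algebra core, namely that $\Lambda_J^\#$ factors through $V=T_x\Cc$ and that the annihilator of $W\cap V$ in $V^\ast$ is the image of $W^0$ under restriction. The only cosmetic difference is that the paper packages your facts (a), (b) and the annihilator identity into the single formula $\Lambda_{J_\Cc}^\#(i^\ast\xi\otimes\lambda)=\Lambda_J^\#(\xi\otimes\lambda)$ together with $(TS\cap T\Cc)^{0_\Cc}=i^\ast(T^0S)$, whereas you spell these out pointwise.
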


\begin{example}\label{ex:simul} \
	\begin{enumerate}
		\item Any coisotropic submanifold (in particular a Legendrian submanifold) in a contact manifold is a coisotropic submanifold with respect to~the associated Jacobi structure (see Section \ref{sec:cois_cont} for details).
		
		\item Let $S$ be a coisotropic submanifold of a Jacobi manifold $(M,L,\{-,-\})$, and let $X \in \mathfrak{X}(M)$ be a Jacobi vector field such that $X_x \notin T_x S$, for all $x\in S$. Then $\mathcal T$, the flowout of $S$ along $X$, is a coisotropic submanifold as well. Indeed, let $\{\phi_t\}$ be the flow of $X$. Clearly, whenever defined, $\phi_t (S)$ is a coisotropic submanifold, and the claim immediately follows from Lemma \ref{lem:cois}.
	\end{enumerate}
\end{example}

\subsection{Jacobi subalgebroid associated with a closed coisotropic submanifold}
We are interested in deformations of a \emph{closed} coisotropic submanifold, so, from now on, we assume that $S$ is a closed submanifold in a smooth manifold $M$. Let $A \to M$ be a Lie algebroid. Recall that a \emph{subalgebroid of $A$ over $S$} is a vector subbundle $B \to S$, with embeddings $j : B \INTO A$ and $\underline{j} : S \INTO M$, such that the anchor $\rho : A \to TM$ descends to a (necessarily unique) vector bundle morphism $\rho_B : B \to TS$, making diagram
\[
\xymatrix{B \ar[r]^-j \ar[d]_-{\rho_B} & A \ar[d]^-{\rho} \\
	TS \ar[r]^-{d \underline{j}} & TM}
\]
commutative and, moreover, for all $\beta, \beta^\prime \in \Gamma (B)$ there exists a (necessarily unique) section $[\beta,\beta^\prime]_B \in \Gamma (B)$ such that whenever $\alpha, \alpha^\prime \in \Gamma (A)$ are $j$-related to $\beta, \beta^\prime$ (i.e.~$j \circ \beta = \alpha \circ \underline{j}$, in other words $\alpha |_S = \beta$, and similarly for $\beta^\prime, \alpha^\prime$) then $[\alpha, \alpha^\prime]_A$ is $j$-related to $[\beta, \beta^\prime]_B$. In this case $B$, equipped with $\rho_B$ and $[-,-]_B$, is a Lie algebroid itself. One can also give a notion of \emph{Jacobi subalgebroid} as follows.

Let $(A,L)$ be a Jacobi algebroid with representation $\nabla$.

\begin{definition}
	A \emph{Jacobi subalgebroid of $(A,L)$ over S} is a pair $(B, \ell)$, where $B \to S$ is a Lie subalgebroid of $A$ over $S \subset M$, and $\ell := L|_S \to S$ is the pull-back line subbundle of $L$, such that $\nabla$ descends to a (necessarily unique) vector bundle morphism $\nabla|_\ell$ making diagram
	\[
	\xymatrix{B \ar[r]^-j \ar[d]_-{\nabla |_\ell} & A \ar[d]^-{\nabla} \\
		\der \ell \ar[r]^-{\der j_\ell} & \der L}
	\]
	commutative. Here $j_\ell : \ell \INTO L$ is the inclusion (see Appendix \ref{sec:app_0} for a definition of the morphism $\der j_\ell$).
\end{definition}

If $(B,\ell)$ is a Jacobi subalgebroid, then the restriction $\nabla|_\ell$ is a representation so that $(B, \ell)$, equipped with $\nabla|_\ell$, is a Jacobi algebroid itself.

Now, let $(M,L,J \equiv \{-,-\})$ be a Jacobi manifold, and let $S$ be a submanifold. In what follows, we denote by
\begin{itemize}
	\item $\ell :=L|_S$ the restricted line bundle,
	\item $NS:=TM|_S/TS$ the normal bundle of $S$ in $M$,
	\item $N^\ast S:= (NS)^\ast\cong T^0 S \subset T^\ast M$ the conormal bundle of $S$ in $M$,
	\item $N_\ell S := NS \otimes \ell^\ast$, and by
	\item $N_\ell{}^\ast S := (N_\ell S)^\ast = N^\ast S \otimes \ell$ the $\ell$-adjoint bundle of $NS$.
\end{itemize}
The vector bundle $N_\ell {}^\ast S$ will be also regarded as a vector subbundle of $(J^1 L)|_S$ via the vector bundle embedding
\begin{equation*}
	N_\ell {}^\ast S \lhook\joinrel\longrightarrow (T^\ast M \otimes L)|_S \overset{\gamma}{\longrightarrow} J^1 L|_S,
\end{equation*}
where $\gamma$ is the co-symbol. If $\lambda\in\Gamma(L)$, we have that $(j^1\lambda)|_S\in \Gamma (N_\ell {}^\ast S) $ if and only if $\lambda|_S=0$, i.e.~$\lambda\in\Gamma_S$.

The following proposition establishes a one-to-one correspondence be\-tween coisotropic submanifolds and certain Lie subalgebroids of $J^1 L$.

\begin{proposition}\label{prop:conormal}
	(cf.~\cite[Proposition 5.2]{IM2003}) The submanifold $S \subset M$ is coisotropic if and only if $(N_\ell {}^\ast S, \ell)$ is a Jacobi subalgebroid of $(J^1 L, L)$.
\end{proposition}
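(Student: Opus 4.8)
The plan is to prove both implications by unwinding the definition of Jacobi subalgebroid and using the explicit structure maps for $(J^1L, L)$ from Example \ref{ex:jacb2}. Recall that $N_\ell{}^\ast S \subset (J^1L)|_S$ consists precisely of those $1$-jets $(j^1\lambda)|_S$ with $\lambda\in\Gamma_S$, together with $\gamma(\eta)$ for $\eta\in\Omega^1(M,L)$ supported on the conormal directions; more intrinsically, a section $\alpha|_S$ of $(J^1L)|_S$ lies in $\Gamma(N_\ell{}^\ast S)$ iff $\alpha = j^1\lambda + \gamma(\eta)$ with $\lambda\in\Gamma_S$ and $\eta\in\Omega^1(M,L)$ restricting to a conormal form along $S$. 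The three conditions we must verify for $(N_\ell{}^\ast S,\ell)$ to be a Jacobi subalgebroid of $(J^1L,L)$ are: (i) the anchor $\rho_J$ descends, i.e.\ $\rho_J(\alpha)$ is tangent to $S$ whenever $\alpha|_S\in\Gamma(N_\ell{}^\ast S)$; (ii) the bracket $[-,-]_J$ closes on $j$-related sections; (iii) the representation $\nabla^J$ descends to $\der\,\ell$.

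First I would handle the anchor condition (i), which is where coisotropy enters directly. By Formula (\ref{eq:Liej2}), $\rho_J(\alpha) = X_\lambda + \Lambda_J^\#(\eta)$ for $\alpha = j^1\lambda + \gamma(\eta)$. Along $S$, when $\lambda\in\Gamma_S$ and $\eta|_S \in \Gamma(N^\ast S\otimes\ell)=\Gamma(T^0S\otimes\ell)$, we get $X_\lambda|_S\in TS$ by Lemma \ref{lem:cois}.(\ref{3}) and $\Lambda_J^\#(\eta)|_S\in TS$ precisely by the definition of coisotropy (since $\Lambda_J^\#(T^0S\otimes L)\subset TS$). Conversely, if $S$ is not coisotropic, pick $x$ and $\eta_x\in T_x^0S\otimes\ell_x$ with $\Lambda_J^\#(\eta_x)\notin T_xS$; extending to a global conormal form and taking $\lambda = 0$ produces a section of $N_\ell{}^\ast S$ whose anchor image is not tangent to $S$, so the subalgebroid property fails. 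This already gives the equivalence at the level of the anchor; the remaining work is to show that \emph{coisotropy alone} forces (ii) and (iii) as well, so that the one-to-one correspondence is clean.

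For (ii) and (iii), the key point is that $N_\ell{}^\ast S$ is generated as a $C^\infty(S)$-module by elements of the form $(j^1\lambda)|_S$ with $\lambda\in\Gamma_S$ (locally, writing $\eta = \sum f_i\, d g_i\otimes\mu$ with $g_i\in I(S)$, one has $\gamma(dg_i\otimes\mu)|_S = (j^1(g_i\mu))|_S$ since $g_i\mu\in\Gamma_S$). So it suffices to check closure of the structure maps on such generators and then invoke the Leibniz/derivation rules of the Jacobi algebroid $(J^1L,L)$ to propagate to arbitrary sections — this is the standard reduction for subalgebroid statements. On generators, (\ref{eq:jaclie2}) gives $[j^1\lambda, j^1\mu]_J = j^1\{\lambda,\mu\}$, and $\{\lambda,\mu\}\in\Gamma_S$ by Lemma \ref{lem:cois}.(\ref{2}) since $\Gamma_S$ is a Lie subalgebra; hence the bracket of $j$-related generating sections is again $j$-related to a section of $N_\ell{}^\ast S$. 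Likewise $\nabla^J_{j^1\lambda}\nu = \{\lambda,\nu\}$ by (\ref{eq:jaclie2}); restricted to $S$ this depends only on $\nu|_S = \nu|_S\in\Gamma(\ell)$ — one checks $\{\lambda,\nu\}|_S$ vanishes whenever $\nu|_S = 0$ using $\{\lambda, f\nu\} = f\{\lambda,\nu\} + X_\lambda(f)\nu$ together with $X_\lambda|_S\in TS$ — so $\nabla^J$ descends to a well-defined $\der\,\ell$-valued operator on $B$. Extending from generators to all sections via the derivation identities in (\ref{eq:d_A,L}) and the $C^\infty$-linearity of the anchor in its first slot completes the verification, and the final remark after Proposition \ref{prop:lijac} (that a descended representation is automatically flat) shows $(N_\ell{}^\ast S,\ell)$ is genuinely a Jacobi algebroid.

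The main obstacle I anticipate is the bookkeeping in the converse direction and in the ``extend from generators'' step: one must be careful that the decomposition $\alpha = j^1\lambda + \gamma(\eta)$ is not canonical (it depends on a choice of splitting of the jet sequence), so the cleanest route is to argue intrinsically — characterize $\Gamma(N_\ell{}^\ast S)$ as $\{\alpha|_S : \alpha\in\Gamma(J^1L),\ \widehat\Lambda_J^\#(\alpha)|_S \text{ annihilates...}\}$ is awkward, so instead I would fix, near each point of $S$, a local frame and local functions cutting out $S$, do the computation with generators $j^1(g_i\mu)$, and then note that all the assertions are $C^\infty$-linear or satisfy Leibniz rules in a way that makes the local-to-global and generator-to-general passage automatic. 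A secondary subtlety is confirming that $\rho_J$ restricted to $B$ lands in $TS$ forces the induced $\rho_B$ to be the \emph{unique} vector bundle morphism making the square commute — but this is immediate from the injectivity of $d\underline{j}:TS\hookrightarrow TM|_S$.
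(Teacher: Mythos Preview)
Your proposal is correct and follows essentially the same route as the paper: both reduce to generators $(j^1\lambda)|_S$ with $\lambda\in\Gamma_S$, use (\ref{eq:jaclie2}) together with Lemma~\ref{lem:cois} to close the bracket and representation on generators, extend via the Leibniz rules, and observe that the anchor condition $\rho_J(N_\ell{}^\ast S)\subset TS$ alone characterizes coisotropy for the converse. The paper makes one step more explicit than you do --- it separately checks, via (\ref{eq:Liej}), that $[\alpha,\beta]_J|_S=0$ whenever $\alpha|_S=0$ and $\beta|_S\in\Gamma(N_\ell{}^\ast S)$, which is the well-definedness of the descended bracket --- but this is exactly what your ``extend by Leibniz'' step amounts to once the anchor is tangent. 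One small correction: your worry that the decomposition $\alpha=j^1\lambda+\gamma(\eta)$ is non-canonical is misplaced; the Spencer sequence splits canonically (as $\R$-vector spaces) via $j^1$, so the decomposition of sections is unique --- this doesn't affect your argument, but you need not hedge about it.
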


\begin{proof} Let $S \subset M$ be a coisotropic submanifold. We want to show that $N_\ell {}^\ast S$ is a Jacobi subalgebroid of $J^1 L$. We propose a proof which is shorter than the one in \cite{IM2003}. Since $S$ is coisotropic, we have
	\begin{equation}
		\rho_J ( N_\ell {}^\ast S ) \subset TS , \label{eq:anchor1}
	\end{equation}
	and similarly
	\begin{equation}
		\nabla^J ( N_\ell {}^\ast S ) \subset \der \ell . 
	\end{equation}
	Next we shall show that for any $\alpha, \beta \in \Gamma (J^1 L)$ such that $\alpha|_S , \beta |_S \in \Gamma (N_\ell {}^\ast S)$ we have
	\begin{equation}
		[\alpha, \beta]_{J} |_S \in \Gamma (N_\ell {}^\ast S). 
	\end{equation}
	First we note that if $\alpha |_S \in \Gamma (N_\ell {}^\ast S)$ then $\alpha = \sum f j^1 \lambda$ for some $\lambda \in \Gamma_S$. Using the Leibniz properties of the Jacobi bracket we can restrict to the case $\alpha, \beta \in j^1 \Gamma_S$. The latter case can be handled taking into account (\ref{eq:jaclie2}) and Lemma \ref{lem:cois}. Moreover, using (\ref{eq:Liej}), we easily check that
	\[
	[\alpha, \beta]_{J} |_S = 0 \text{ if } \alpha|_S = 0 \text{ and } \beta |_S \in \Gamma (N_\ell {}^\ast S).
	\]
	This completes the ``only if part'' of the proof.
	
	To prove the ``if part'' it suffices to note that condition (\ref{eq:anchor1}), regarded as a condition on the image of the anchor map of the Lie subalgebroid $N_\ell {}^\ast S$, implies, in view of (\ref{eq:Liej}), that
	$S$ is a coisotropic submanifold.
\end{proof}

\begin{remark}\label{rem:defcois}\
	Different versions of Proposition \ref{prop:conormal} were proved for the Poisson case \cite[Proposition 3.1.3]{Weinstein1988}, \cite[Proposition 5.1]{Cattaneo}, \cite[Theorem 10.4.2]{Mackenzie2005}.
\end{remark}

\subsection{$L_\infty$-algebra associated with a coisotropic submanifold}
Let $M$ be as above, and let $S \subset M$ be a closed submanifold.
Let
\[
P_0 : \Gamma (J_1 L) \longrightarrow \Gamma (N_\ell S)
\]
be the projection adjoint to the embedding 
$$\gamma : N_\ell {}^\ast S \INTO J^1 L,\ \text{i.e.}\ \la P_0 (\Delta)_x , \alpha_x \ra = \la \Delta_x , \gamma (\alpha_x) \ra,$$
where $\Delta \in \Gamma (J_1 L)$, $\alpha \in \Gamma (N_\ell {}^\ast S)$, and $x \in S$. Tensorizing by $\Gamma (L)$ we also get a projection
\[
P : \Der L \longrightarrow \Gamma (NS).
\]
It is not hard to see that $P$ coincides with the composition
\begin{equation}\label{eq:seq_P}
	\Der L \overset{\sigma}{\longrightarrow} \mathfrak{X}(M) \longrightarrow \Gamma (TM |_S) \longrightarrow \Gamma (NS),
\end{equation}
where the second arrow is the restriction, and the last arrow is the canonical projection. Projection $P_0$ extends uniquely to a (degree zero) morphism of graded algebras $ \Gamma (\wedge^\bullet J_1 L) \to \Gamma (\wedge^\bullet N_\ell S)$ which we denote again by $P_0$. Similarly, $P$ extends uniquely to a (degree zero) morphism of graded modules $ (\Der^\bullet L)[1] \to \Gamma (\wedge^\bullet N_\ell S \otimes \ell)[1]$ which we denote again by $P$. As in the Poisson case (see, e.g., \cite{CS2008}), the projection $P :  (\Der^\bullet L)[1] \to \Gamma (\wedge^\bullet N_\ell S \otimes \ell)[1]$ allows to formulate a further characterization of coisotropic submanifolds.

\begin{proposition}\label{prop:PJ=0}
	The submanifold $S$ is coisotropic if and only if $P (J) = 0$.
\end{proposition}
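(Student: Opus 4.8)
The plan is to compute $P(J)$ explicitly and show it vanishes precisely when $S$ is coisotropic, using the characterization of coisotropy in Lemma \ref{lem:cois}. Since $P(J)$ is a section of $\Gamma(\wedge^2 N_\ell S \otimes \ell)$ over $S$, and everything is local, I would first reduce to the case that $L$ is trivial, so that $\Gamma(L) = C^\infty(M)$, a generator $\lambda$ of $\Gamma(L)$ is fixed, and $J$ identifies with a pair $(\Lambda, \Gamma)$ as in Remark \ref{rem:standj}. Then I would unwind the definition of the extended projection $P : (\Der^\bullet L)[1] \to \Gamma(\wedge^\bullet N_\ell S \otimes \ell)[1]$: it is built from $P_0 : \Gamma(J_1 L) \to \Gamma(N_\ell S)$, which is dual to the co-symbol embedding $\gamma : N_\ell{}^\ast S \hookrightarrow J^1 L$. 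Concretely, for $\Delta \in \Der\, L$ one has, by \eqref{eq:seq_P}, that $P(\Delta)$ is the image in $\Gamma(NS)$ of the symbol $\sigma(\Delta)$ restricted to $S$; more generally $P$ sends a multi-differential operator to the multivector obtained by taking bi-symbols, restricting to $S$, and projecting each slot to $NS$.

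Next I would evaluate $P(J)$ on conormal covectors. The key point is that $P_0$, being adjoint to $\gamma$, pairs a section of $\wedge^2 J_1 L \otimes L$ against elements of the form $\gamma(\eta) \wedge \gamma(\theta)$ with $\eta, \theta \in \Gamma(N^\ast S \otimes \ell) = \Gamma(N_\ell{}^\ast S)$. By the very definition of the bi-symbol $\Lambda_J$ (the restriction of $\widehat\Lambda_J$ along $\gamma$), we get
\[
\langle P(J)_x, \eta_x \wedge \theta_x \rangle = \Lambda_J(\eta_x, \theta_x), \qquad \eta_x, \theta_x \in N^\ast_x S \otimes \ell_x,
\]
for $x \in S$. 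Here I use that for a bi-differential operator the only surviving contribution under $P$ is the bi-symbol part, the lower-order ``$\Gamma$'' term dropping out because it factors through the identity, not through two cotangent slots. Thus $P(J) = 0$ if and only if $\Lambda_J$ restricted to $N^\ast S \otimes \ell = T^0 S \otimes \ell$ vanishes identically, i.e.\ $T^0 S \otimes \ell$ is isotropic for $\Lambda_J$ at every point of $S$. That is exactly the pointwise condition defining a coisotropic submanifold (the equivalent formulation $\Lambda_J^\#(T^0 S \otimes L) \subset TS$ given right before Lemma \ref{lem:cois}, using that $\langle \Lambda_J^\#(\eta \otimes \lambda), \theta\rangle\mu = \Lambda_J(\eta\otimes\lambda, \theta\otimes\mu)$).

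Finally I would assemble the two implications. If $S$ is coisotropic, then by definition $\Lambda_J(T^0 S \otimes \ell, T^0 S \otimes \ell) = 0$ along $S$, so by the displayed pairing formula $P(J)$ annihilates all decomposables $\eta \wedge \theta$ with $\eta, \theta$ conormal-valued, hence $P(J) = 0$. Conversely, if $P(J) = 0$, the same formula forces $\Lambda_J$ to vanish on $N^\ast S \otimes \ell$, hence $\Lambda_J^\#(T^0 S \otimes L) \subset (TS^0)^\circ = TS$, so every $T_x S$ is coisotropic and $S$ is coisotropic. The main obstacle I anticipate is bookkeeping the identification of $P$ with the bi-symbol-then-project map and checking carefully that the lower-order term of $J$ (the vector field $\Gamma$, or in the abstract setting the derivative part of the first-order operator) contributes nothing to $P(J)$; once that is pinned down via the adjointness of $P_0$ to $\gamma$ and the fact that $\gamma$ lands in $J^1 L$ ``orthogonally'' to the $j^1$-part, the equivalence is immediate. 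A clean way to dispatch this is to note that, by \eqref{eq:LambdaJ}, $P(J)$ evaluated on $df|_S$, $dg|_S$ (tensored with the generator $\lambda$) equals $\left(X_{f\lambda}(g) - f X_\lambda(g)\right)\lambda|_S$ for $f, g \in I(S)$, which is $\{f\lambda, g\lambda\}|_S$ by \eqref{eq:LambdaJ}, and this vanishes for all such $f, g$ exactly when $\Gamma_S$ is closed under the bracket, i.e.\ by Lemma \ref{lem:cois} exactly when $S$ is coisotropic.
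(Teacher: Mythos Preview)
Your proposal is correct and follows essentially the same approach the paper takes: the paper does not give a formal proof of Proposition~\ref{prop:PJ=0} but instead explains in Remark~\ref{rem:PJ} that $P(J)=P(\Lambda_J)$ because $P$ factors through the symbol map, so only the bi-symbol survives, and then the identification of $P(\Lambda_J)$ with the restriction of $\Lambda_J$ to conormal directions makes the equivalence with coisotropy immediate from the definition. Your write-up is simply a more detailed unpacking of that same argument, including the explicit verification via \eqref{eq:LambdaJ} and Lemma~\ref{lem:cois} that the lower-order part of $J$ contributes nothing.
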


\begin{remark}\label{rem:PJ}
	Let $S \subset M$ be any submanifold, then $P (J)$ does only depend on the bi-symbol $\Lambda_J$ of $J$. To see this, note, first of all, that the symbol $\sigma : \Der L \to \mathfrak X (M)$ induces an obvious projection $ \Der^\bullet L \to \Gamma (\wedge^\bullet (TM \otimes L^\ast) \otimes L)$. Moreover, in view of its very definition, $P :  (\Der^\bullet L)[1] \to \Gamma (\wedge^\bullet N_\ell S \otimes \ell)[1]$ descends to an obvious projection
	\[
	\Gamma (\wedge^\bullet (TM \otimes L^\ast) \otimes L)[1] \longrightarrow \Gamma (\wedge^\bullet N_\ell S \otimes \ell)[1],
	\]
	which, abusing the notation, we denote again by $P$. Now, recall that $\Lambda_J \in \Gamma (\wedge^2 (TM \otimes L^\ast) \otimes L)$. It immediately follows from the definition of $P$ that, actually,
	\[
	P(J) = P(\Lambda_J).
	\]
	In particular $S$ is coisotropic if and only if $P (\Lambda_J)=0$.
\end{remark}

From now on we assume that $S$ is coisotropic. In this case, the Jacobi algebroid structure on $(N_\ell {}^\ast S, \ell)$ (Proposition \ref{prop:conormal}) turns the graded space $\Gamma (\wedge^\bullet N_\ell S \otimes \ell)$ into the de Rham complex of $N_\ell {}^\ast S$, with values in $\ell$. To express
the differential $d_{N_\ell {}^\ast S, \ell}$ in terms of the
differential $d_J = [J,-]^{SJ}$ on $\Der^\bullet L$ it suffices to find
a right inverse $I : \Gamma (\wedge^\bullet N_\ell S \otimes \ell)[1] \to (\Der^\bullet L)[1]$ of $P$. However, there is no natural way to do this unless further structure is available. In what follows we use a \emph{fat tubular neighborhood} as an additional structure. Before giving a definition, recall that a \emph{tubular neighborhood of $S$} is an embedding of the normal bundle $NS$ into $M$ which identifies the zero section $\mathbf 0$ of $NS \to S$ with the inclusion $i : S \INTO M$. Denote by $\pi : NS \to S$ the projection and consider the pull-back line bundle $L_{NS}:= \pi^\ast \ell = NS \times_S \ell$ over $NS$. Moreover, let $i_L : \ell \INTO L$ be the inclusion.

\begin{definition}
	A \emph{fat tubular neighborhood of $\ell \to S$ in $L \to M$ over a tubular neighborhood $\underline \tau : NS \INTO M$} is an embedding $\tau : L_{NS} \INTO L$ of vector bundles over $\underline \tau : NS \INTO M$ such that the diagram
	\[
	\xymatrix@R=20pt{ L_{NS} = \pi^\ast \ell \ar[rr]^(.5){\tau} \ar[dd] \ar[dr] & & L \ar[dd] \\
		& \ell \ar@/^/@<0.7 ex>[ul] \ar[ur]_-{i_L} \ar[dd]& \\
		NS \ar[dr]^-{\pi} \ar[rr]|!{[ur];[dr]}\hole^(.5){\underline{\tau}} & & M \\
		& S \ar@/^/@<0.7 ex>[ul]^-{\mathbf 0} \ar[ur]_-{i} & }
	\]
	commutes.
\end{definition}

In particular, it follows from the above definition that $\tau$ is an isomorphism when restricted to fibers. A fat tubular neighborhood can be understood as a ``tubular neighborhood in the category of line bundles''. In the following we regard $S$ as a submanifold of $NS$ identifying it with the image of the zero section $\mathbf 0 : S \to NS$.

\begin{lemma}
	There exist fat tubular neighborhoods of $\ell$ in $L$.
\end{lemma}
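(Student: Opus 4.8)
The plan is to build a fat tubular neighborhood by first choosing an ordinary tubular neighborhood $\underline\tau \colon NS \INTO M$ and then lifting it to a vector bundle embedding $\tau \colon L_{NS} \INTO L$. Such an ordinary tubular neighborhood exists by the standard tubular neighborhood theorem (e.g.\ pick an auxiliary Riemannian metric on $M$ and use the exponential map, shrinking $NS$ to a suitable open neighborhood of the zero section). Once $\underline\tau$ is fixed, the pull-back bundle $\underline\tau{}^\ast L \to NS$ is a line bundle over $NS$ which, restricted to the zero section $S \hookrightarrow NS$, is canonically identified with $\ell = L|_S$; and by construction $L_{NS} = \pi^\ast\ell$ also restricts to $\ell$ over $S$. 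So what we need is a line bundle isomorphism $\Phi \colon \pi^\ast\ell \to \underline\tau{}^\ast L$ covering $\id_{NS}$ and restricting to the identity of $\ell$ over the zero section; composing $\Phi$ with the canonical map $\underline\tau{}^\ast L \to L$ (covering $\underline\tau$) then yields the desired $\tau$, and the commutativity of the pentagonal diagram is immediate from the construction.

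To produce $\Phi$, I would use the fact that $NS$ deformation retracts onto its zero section $S$ via the fiberwise scalings $h_t \colon NS \to NS$, $h_t(v) = tv$, $t \in [0,1]$, with $h_1 = \id$ and $h_0 = \mathbf 0 \circ \pi$. Pulling back $\underline\tau{}^\ast L$ along this homotopy and invoking the homotopy invariance of line bundles (isomorphism classes of line bundles are a homotopy functor), one gets $\underline\tau{}^\ast L \cong (h_0)^\ast\underline\tau{}^\ast L = \pi^\ast(\mathbf 0^\ast\underline\tau{}^\ast L) = \pi^\ast\ell = L_{NS}$. The only point requiring a little care is that the isomorphism can be chosen to restrict to the identity over the zero section: this is arranged by noting that $h_t$ fixes $S$ pointwise for all $t$, so the induced homotopy of pullbacks is stationary over $S$, hence the resulting bundle isomorphism is the identity there (alternatively, fix a nowhere-zero local comparison and globalize via a partition of unity along the contractible-along-fibers structure).

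Alternatively, and perhaps more cleanly, I would argue directly: choose a connection $\nabla^L$ on $L$ and, for each $v \in NS$ lying over $s \in S$, define $\tau$ by parallel transport of the fiber $\ell_s = L_s$ along the path $t \mapsto \underline\tau(tv)$ in $M$. This is manifestly a vector bundle map $L_{NS} \to L$ over $\underline\tau$, it is a fiberwise isomorphism since parallel transport is, and it restricts to $i_L$ over the zero section because the path degenerates to the constant path at $s$ there. Shrinking $NS$ if necessary so that $\underline\tau$ is an embedding on the chosen neighborhood completes the construction. The main (and essentially only) obstacle is the bookkeeping needed to verify that the boundary-condition "$\tau|_\ell = i_L$" and the diagram commutativity hold; the existence of the underlying maps is soft, resting only on the tubular neighborhood theorem and either homotopy invariance of line bundles or the parallel-transport trick.
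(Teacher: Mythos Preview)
Your proposal is correct and follows essentially the same route as the paper: pick an ordinary tubular neighborhood $\underline\tau$, then use the fact that the fibers of $NS \to S$ are contractible to produce an isomorphism $L_{NS} \simeq \underline\tau^\ast L$ over $\id_{NS}$, and compose with the canonical map $\underline\tau^\ast L \to L$. The paper states this last step tersely (simply asserting that contractibility of the fibers gives $NS \times_S V|_S \simeq V$ for any bundle $V \to NS$), whereas you spell out two explicit implementations---homotopy invariance of line bundles along the fiberwise scaling, and radial parallel transport via a chosen connection on $L$---and you are more careful than the paper in checking that the resulting isomorphism restricts to the identity over the zero section, which is exactly what the commutativity of the defining diagram requires.
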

\begin{proof}
	Since fibers of $NS \to S$ are contractible, for every vector bundle $V \to NS$ over $NS$ there is a, generically non-canonical, isomorphism of vector bundles $NS \times_S V |_S \cong V $ over the identity of $NS$. Now, let $\underline \tau : NS \INTO M$ be a tubular neighborhood of $S$. According to the above remark, the pull-back bundle $\underline \tau {}^\ast L \to NS$ is (non-canonically) isomorphic to $L_{NS}$. Pick any isomorphism $\phi : L_{NS} \to \underline \tau {}^\ast L $. Then the composition
	\[
	L_{NS} \overset{\phi}{\longrightarrow} \underline \tau {}^\ast L \longrightarrow L,
	\]
	where the second arrow is the canonical map, is a fat tubular neighborhood of $\ell$ over $\underline \tau$.
\end{proof}

Choose once for all a fat tubular neighborhood $\tau : L_{NS} \INTO L$ of $\ell$ over a tubular neighborhood $\underline \tau : NS \INTO M$ of $S$. We identify $NS$ with the open neighborhood $\underline \tau (NS)$ of $S$ in $M$. Similarly, we identify $L_{NS}$ with $L|_{\underline \tau (NS)}$. In particular $NS$ inherits from $\underline \tau (NS)$ a Jacobi structure with Jacobi bundle given by $L_{NS}$. Abusing the notation we denote by $J$ again the Jacobi bracket on $\Gamma (L_{NS})$. Moreover, in view of Proposition \ref{prop:PJ=0}, there is a projection $P : (\Der^\bullet L_{NS})[1] \to \Gamma (\wedge^\bullet N_\ell S \otimes \ell)[1]$ such that $P (J) = 0$.

Now, regard the vertical bundle $V (NS) := \ker d\pi$ as a Lie algebroid and note preliminarily that
\begin{enumerate}
	\item There is a natural splitting $T(NS)|_S = TS \oplus NS$, where the projection $T(NS)|_S \to TS$ is $d \pi$, while the projection $T (NS)|_S \to NS$ is the natural one. In particular, sections of $NS$ can be understood as vector fields on $NS$ along the submanifold $S$ and vertical with respect to~$\pi$.
	\item Since $\pi : NS \to S$ is a vector bundle, the vertical bundle $V(NS)$ identifies canonically with the induced bundle $\pi^\ast NS \to NS$. In particular, there is an embedding $\pi^\ast : \Gamma (NS) \INTO \mathfrak{X} (NS)$ that takes a section $\nu$ of $NS$ to the unique vertical vector field $\pi^\ast \nu$ on $NS$, which is constant along the fibers of $\pi$, and agrees with $\nu$ on $S$.
	\item Since $L_{NS} = \pi^\ast \ell = NS \times_S \ell$, there is a natural flat connection $\mathbb D$ in {$L_{NS}$}, along the Lie algebroid $V(NS)$, uniquely determined by $\mathbb D_X \pi^\ast \lambda = 0$, for all vertical vector fields $X$ on $NS$, and all fiber-wise constant sections $\pi^\ast \lambda$ of $L_{NS}$, $\lambda \in \Gamma (\ell)$.
\end{enumerate}

With these preliminary remarks we are finally ready to define a right inverse $I : \Gamma (\wedge^\bullet N_\ell S \otimes \ell)[1] \to (\Der^\bullet L_{NS})[1]$ of $P : (\Der^\bullet L_{NS})[1] \to \Gamma (\wedge^\bullet N_\ell S \otimes \ell)[1]$. First of all, let
\[
I : \Gamma (NS) \INTO  \Der L_{NS}
\]
be the embedding given by $I (\nu) := \mathbb{D}_{\pi^\ast \nu}$. Tensorizing it by $\Gamma (L_{NS}^\ast)$ we also get an embedding
\[
I_0 : \Gamma (N_\ell S) \INTO \Gamma (J_1 L_{NS}).
\]
The inclusion $I_0$ extends uniquely to a (degree zero) morphism of graded algebras $\Gamma (\wedge^\bullet N_\ell S) \to  \Gamma (\wedge^\bullet J_1 L_{NS})$ which we denote again by $I_0$. Similarly, $I$ extends uniquely to a (degree zero) morphism of graded modules $\Gamma (\wedge^\bullet N_\ell S \otimes \ell)[1] \to (\Der^\bullet L_{NS})[1]$ which we denote again by $I$. It is straightforward to check that
\[
P_0 \circ I_0 = \operatorname{id} \quad \text{and} \quad P \circ I = \operatorname{id}.
\]
Using $I$ and the explicit expression for the Schouten-Jacobi bracket, one can check that
\begin{equation}\label{eq:d*2}
	d_{N_\ell {}^\ast S, \ell} \alpha = (P \circ d_J \circ I )(\alpha) = P [J, I (\alpha)]^{SJ}
\end{equation}
for all $\alpha \in \Gamma (\wedge^\bullet N_\ell S \otimes \ell)[1]$.

The rightmost hand side of (\ref{eq:d*2}) reminds us of the Voronov construction of $L_\infty$-algebras via derived brackets. We refer the reader to \cite{Voronov2005} for details. Our conventions about $L_\infty$-algebras are the same as those in \cite{Voronov2005}. In particular, multi-brackets in $L_\infty$-algebras in this paper will always be (graded) symmetric. Now, using the derived bracket construction, we are going to define an $L_\infty$-algebra structure $\{ \mathfrak{m}_k \}$
on $\Gamma (\wedge^\bullet N_\ell S \otimes \ell)[1]$ whose first (unary) bracket $\mathfrak{m}_1 $ coincides with the differential $d_{N_\ell {}^\ast S , \ell}$. The following Proposition is an analogue of Lemma 2.2 in \cite{FZ2012}, see also \cite{CF2007} and \cite[Appendix]{OP2005}.

\begin{proposition}\label{prop:linfty}
	Let $I: \Gamma (\wedge^\bullet N_\ell S \otimes \ell)[1] \INTO (\Der^\bullet L_{NS})[1]$ be the embedding defined above.
	There is an $L_\infty$-algebra structure on $\Gamma (\wedge^\bullet N_\ell S \otimes \ell)[1]$ given by the following family of graded multi-linear maps $\mathfrak{m} _k : \Gamma (\wedge^\bullet N_\ell S \otimes \ell)[1]^{\otimes k} \to \Gamma (\wedge^\bullet N_\ell S \otimes \ell)[1]$
	\begin{equation}\label{eq:higher_derived_brackets}
		\mathfrak{m}_k (\xi_1, \dots, \xi_k): =P [\cdots [[J, I(\xi_1)] ^{SJ}, I(\xi_2)]^{SJ}\cdots , I (\xi_k)]^{SJ}.
	\end{equation}
\end{proposition}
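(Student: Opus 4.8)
The plan is to exhibit the family $\{\frak m_k\}$ of (\ref{eq:higher_derived_brackets}) as an instance of Voronov's higher derived bracket construction \cite{Voronov2005}, so that it remains only to check the hypotheses of that construction. Recall that Voronov's construction takes as input a quadruple of \emph{V-data}: a graded Lie algebra $(\mathfrak g,[-,-])$, an abelian graded Lie subalgebra $\mathfrak a\subseteq\mathfrak g$, a projection $P:\mathfrak g\to\mathfrak a$ whose kernel is a graded Lie subalgebra, and an element $\Delta\in\ker P$ of degree $1$ with $[\Delta,\Delta]=0$; it then produces an $L_\infty$-structure on $\mathfrak a$ (with the symmetric-bracket conventions fixed above) whose $k$-th bracket sends $a_1,\ldots,a_k$ to $P[\cdots[[\Delta,a_1],a_2]\cdots,a_k]$. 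In the case at hand I take $\mathfrak g=(\Der^\bullet L_{NS})[1]$ with the Schouten--Jacobi bracket $[-,-]^{SJ}$ (a graded Lie algebra by Example \ref{ex:jacb1}); $\mathfrak a=\operatorname{im}I$, which via the embedding $I$ (injective since $P\circ I=\operatorname{id}$) is identified with $\Gamma(\wedge^\bullet N_\ell S\otimes\ell)[1]$, the corresponding projection onto $\mathfrak a$ being $I\circ P$, with kernel $\ker P$; and $\Delta=J$. Now $J\in\Der^2L_{NS}$ has degree $1$ in $(\Der^\bullet L_{NS})[1]$; it satisfies $[J,J]^{SJ}=0$ because it is a Jacobi bracket (Lemma \ref{lem:GM1}.(2)); and $P(J)=0$ because $S$ is coisotropic (Proposition \ref{prop:PJ=0}), so $J\in\ker P$. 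Granting the remaining hypotheses, Voronov's construction delivers an $L_\infty$-structure on $\mathfrak a$; under the identification furnished by $I$ it is exactly (\ref{eq:higher_derived_brackets}), and its unary bracket equals $P[J,I(-)]^{SJ}=P\circ d_\ast^J\circ I=d_{N_\ell {}^\ast S,\ell}$ by (\ref{eq:d*2}).

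Two hypotheses remain to be verified. First, $\mathfrak a=\operatorname{im}I$ is an abelian subalgebra. Since $[-,-]^{SJ}$ is determined on generators by the graded Leibniz rule, it is enough to show that all brackets among the generators of $\operatorname{im}I$ vanish; these are the fibre-wise constant functions $\pi^\ast f$ ($f\in C^\infty(S)$, entering as coefficients), the fibre-wise constant sections $\pi^\ast\lambda\in\Gamma(L_{NS})$ ($\lambda\in\Gamma(\ell)$), and the derivations $I(\nu)=\mathbb D_{\pi^\ast\nu}$ ($\nu\in\Gamma(NS)$). Indeed $[\pi^\ast\lambda,\pi^\ast\mu]^{SJ}$ lies in $\Der^{-1}L_{NS}=0$; $[\mathbb D_{\pi^\ast\nu},\pi^\ast\lambda]^{SJ}$ coincides, up to sign, with $\mathbb D_{\pi^\ast\nu}(\pi^\ast\lambda)=0$ by the very definition of $\mathbb D$; $[\mathbb D_{\pi^\ast\nu},\mathbb D_{\pi^\ast\nu'}]^{SJ}=\mathbb D_{[\pi^\ast\nu,\pi^\ast\nu']}=0$ because $\mathbb D$ is flat and fibre-wise constant vertical vector fields on the vector bundle $\pi:NS\to S$ commute; and any Leibniz term in which one of these generators differentiates a coefficient $\pi^\ast f$ vanishes, the symbols of the generators being $\pi$-vertical while $\pi^\ast f$ is $\pi$-basic. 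Hence $[\operatorname{im}I,\operatorname{im}I]^{SJ}=0$.

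Second --- and this is the heart of the matter --- $\ker P$ is a graded Lie subalgebra of $\mathfrak g$. The key is a clean description of $\ker P$. Unwinding the definition of $P$ as the multiplicative extension of the map adjoint to the co-symbol embedding $\gamma:N_\ell {}^\ast S\hookrightarrow J^1L$, and using that $(j^1\lambda)|_S\in\Gamma(N_\ell {}^\ast S)$ precisely when $\lambda\in\Gamma_S$, one checks that $P(\square)=0$ if and only if $\square(\lambda_1,\ldots,\lambda_k)$ vanishes along $S$ whenever $\lambda_1,\ldots,\lambda_k\in\Gamma_S$; equivalently,
\[
\ker P=\big\{\square\in\Der^\bullet L_{NS}\ :\ \square(\Gamma_S,\ldots,\Gamma_S)\subseteq\Gamma_S\big\},
\]
so that $\ker P$ is precisely the space of multi-differential operators preserving the ideal $\Gamma_S$ entry-wise. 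Now the Schouten--Jacobi bracket is of Nijenhuis--Richardson type: evaluated on sections, $[\square,\square']^{SJ}$ is a signed sum of terms obtained by inserting one of $\square,\square'$ into a slot of the other. If $\square$ and $\square'$ both preserve $\Gamma_S$ and all arguments lie in $\Gamma_S$, then the inner operator already returns an element of $\Gamma_S$, which, fed into the outer operator together with the remaining $\Gamma_S$-arguments, again yields an element of $\Gamma_S$; hence $[\square,\square']^{SJ}$ preserves $\Gamma_S$ and lies in $\ker P$. (One may also verify this by a direct computation in coordinates adapted to $S$ via the fat tubular neighbourhood $\tau$, using the explicit formula for $[-,-]^{SJ}$ recalled in Appendix \ref{sec:app}.)

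With both hypotheses established, Voronov's theorem produces the asserted $L_\infty$-structure $\{\frak m_k\}$ on $\Gamma(\wedge^\bullet N_\ell S\otimes\ell)[1]$ given by the higher derived brackets (\ref{eq:higher_derived_brackets}). The only genuine obstacle in the argument is obtaining the above description of $\ker P$ and its closedness under $[-,-]^{SJ}$; everything else is bookkeeping.
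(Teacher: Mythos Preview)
Your proof is correct and follows the same overall route as the paper's: both verify that $((\Der^\bullet L_{NS})[1],\operatorname{im}I,P,J)$ are V-data and then invoke Voronov's theorem. The treatment of $\operatorname{im}I$ being abelian is essentially identical (reduce via the Leibniz rule to the vanishing of $[\mathbb D_{\pi^\ast\nu},\mathbb D_{\pi^\ast\nu'}]$ and of $\mathbb D_{\pi^\ast\nu}(\pi^\ast\lambda)$).

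The one genuine difference is in the argument that $\ker P$ is a Lie subalgebra. The paper characterizes $\ker P$ as the $\Gamma(\wedge^\bullet J_1 L_{NS})$-submodule generated by derivations whose symbol is tangent to $S$ along $S$, and closes it under $[-,-]^{SJ}$ via the generalized Leibniz rule. You instead characterize $\ker P$ as the set of multi-differential operators preserving $\Gamma_S$ entry-wise, and close it under the bracket by direct inspection of the Gerstenhaber-type composition formula for $[-,-]^{SJ}$. Your description is arguably cleaner: it makes closedness under the bracket almost tautological (insert $\Gamma_S$-arguments, the inner operator returns $\Gamma_S$, feed that back in), and it transparently covers the degree $-1$ piece $\ker P\cap\Gamma(L_{NS})=\Gamma_S$, which the paper's ``submodule generated by\ldots'' formulation handles only implicitly. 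The paper's approach, on the other hand, stays closer to the module/Leibniz structure and avoids unpacking the explicit bracket formula. Both are short; the trade-off is a structural Leibniz argument versus a one-line use of the Gerstenhaber formula.
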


\begin{proof}
	First, we observe that the image of $I$ is an abelian subalgebra of the graded Lie algebra $((\Der^\bullet L_{NS})[1] , [-,-]^{SJ})$, or equivalently, the Schouten-Jacobi bracket $[I( \alpha), I (\beta)] ^{SJ}$ vanishes for any two sections $\alpha, \beta \in \Gamma (\wedge^\bullet N_\ell S \otimes \ell)[1]$. The last assertion is a consequence of the (generalized) Leibniz property
	(\ref{eq:genleib2}) for the Schouten-Jacobi bracket, and the fact that if $\alpha$ and $\beta$ are sections of $NS$ then derivations $I (\alpha)$ and $I (\beta)$ commute.
	
	Next, we will show that the kernel of the projection $P$ is a graded Lie subalgebra of $(\Der^\bullet L_{NS})[1]$. Clearly, $\ker P$ is the $\Gamma (\wedge^\bullet J_1 L_{NS})$-submodule generated by those sections of $\der L_{NS}$ whose symbol is tangent to $S$. Since such sections are preserved by the Schouten-Jacobi bracket, it is easy to check that $\ker P$ is also preserved, using the generalized Leibniz property (\ref{eq:genleib2}) again.
	
	Finally, recall that $J \in \ker P$.
	It follows that $((\Der^\bullet L_{NS})[1], \operatorname{im}I, P, J)$ are $V$-data \cite[Theorem 1, Corollary 1]{Voronov2005}. See also
	\cite[\S 1.2, Lemma 2.2]{FZ2012} and \cite{CS2008} where the terminology \emph{V-data} has been introduced for the first time. This completes the proof.
\end{proof}

\begin{remark}\label{rem:deform} \
	\begin{enumerate}
		\item In view of (\ref{eq:d*2}), the differential $\mathfrak{m}_1$ coincides with the Jacobi algebroid differential $d_{N_\ell {}^\ast S, \ell}$.
		
		\item If $(M, \om)$ is a l.c.s.~manifold and $S$ is a coisotropic submanifold in $M$, then $\mathfrak{m}_1$ can be identified, via $\Lambda ^\#$, with a deformation of the foliation differential of the characteristic foliation of $S$ \cite{LO2012}. \color{black}
	\end{enumerate}
\end{remark}

\subsection{Coordinate formulas for the multi-brackets}
\label{sec:multi-brackets_coordinates}

In this subsection we propose some more efficient formulas for the multi-brackets in the $L_\infty$-algebra of a coisotropic submanifold. Let $(M, L, J \equiv \{-,-\})$ be a Jacobi manifold and let $S \subset M$ be a coisotropic submanifold. Moreover, as in the previous subsection, we equip $S$ with a fat tubular neighborhood $\tau : L_{NS} \INTO L$.

\begin{remark}
	\label{rem:multi-brackets_coordinates_1}
	By their very definition, the $\mathfrak m_k$'s satisfy the following properties:
	\begin{itemize}
		\item[(a)] $\mathfrak m_k$ is a graded $\R$-linear map of degree one,
		\item[(b)] $\mathfrak m_k$ is a first order differential operator with scalar-type symbol in each entry separately.
	\end{itemize}
	
	Because of (b) the $\mathfrak m_k$'s are completely determined by their action on all $\lambda\in\Gamma(\ell) = \Gamma (\wedge^0 N_\ell S \otimes \ell)$, and on all $s \in \Gamma (NS) = \Gamma (\wedge^1 N_\ell S \otimes \ell)$. Moreover (a) implies that, if $\xi_1,\dots,\xi_k \in \Gamma (\wedge^\bullet N_\ell S \otimes \ell)[1]$ have non-positive degrees, then $\mathfrak m_k (\xi_1, \dots, \xi_k) = 0$ whenever more than two arguments have degree $-1$.
\end{remark}

From now on, in this section, we identify
\begin{itemize}
	\item a section $\lambda \in \Gamma (\ell)$, with its pull-back $\pi^\ast \lambda \in \Gamma (L_{NS})$,
	\item a section $s \in \Gamma (NS)$, with the corresponding vertical vector field $\pi^\ast s \in \Gamma (\pi^\ast NS) \cong \Gamma (V (NS))$,
	\item a section $\varphi \in \Gamma (N_\ell {}^\ast S)$ of the $\ell$-adjoint bundle $N_\ell {}^\ast S = N^\ast S \otimes \ell$ with the corresponding fiber-wise linear section of $L_{NS}$.
\end{itemize}
Moreover, we denote by $\langle-,-\rangle : NS \otimes N_\ell {}^\ast S \to \ell$ the obvious ($\ell$-twisted) duality pairing.

\begin{proposition}\label{prop:CF}
	The multi-bracket $\mathfrak m_{k+1}$ is completely determined by
	\begin{align}
	\label{eq:CF1}
	\mathfrak{m}_{k+1}(s_1, \dots, s_{k-1}, \lambda,\nu )  &=(-)^{k} \D_{s_1} \cdots \D_{s_{k-1}} \{ \lambda , \nu \} |_S,\\
	\label{eq:CF2}
	\left \langle \mathfrak{m}_{k+1} (s_1,\dots,s_{k},\lambda ) , \varphi \right \rangle & = -(-)^{k} \left. \left( \D_{s_1} \cdots \D_{s_{k}} \{ \lambda , \varphi \} - \sum_{i} \D_{ s_1} \cdots \widehat{\D_{s_{i}}} \cdots \D_{s_{k}} \{ \lambda , \langle s_{i}, \varphi \rangle \} \right) \right |_S,\\
	\notag
	\left \langle \mathfrak{m}_{k+1} (s_1,\dots,s_{k+1}) , \varphi \otimes \psi \right \rangle & = -(-)^k \left( \D_{s_1} \cdots \D_{s_{k+1}} \{ \varphi, \psi \} \text{\textcolor{white}{$\sum_{i = 1}^{k+1}$}}\right.\\
	\notag &\quad  + \sum_{i<j } \D_{s_1} \cdots \widehat{\D_{s_i}}\cdots \widehat{\D_{s_j}} \cdots \D_{s_{k+1}} \left(\{ \langle s_i, \varphi \rangle, \langle s_j , \psi \rangle \} + \{\langle s_j, \varphi \rangle, \langle s_i , \psi \rangle \} \right) \\
	\label{eq:CF3}
	&\quad  \left. \left. - \sum_{i} \D_{s_1} \cdots \widehat{\D_{s_i}} \cdots \D_{s_{k+1}} \left(\{ \langle s_i , \varphi \rangle, \psi \} + \{ \varphi , \langle s_i , \psi \rangle \} \right) \right) \right |_S, 
	\end{align}
%
%
	where $\lambda, \nu \in \Gamma (\ell)$, $s_1, \dots, s_{k+1} \in \Gamma (NS)$, $\varphi, \psi \in \Gamma (N_\ell {}^\ast S)$, and a hat ``$\widehat{-}$'' denotes omission.
\end{proposition}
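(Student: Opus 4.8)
The plan is to evaluate the defining expression (\ref{eq:higher_derived_brackets}) directly, unwinding the iterated Schouten--Jacobi brackets and then applying $P$. By Remark \ref{rem:multi-brackets_coordinates_1} it suffices to compute $\mathfrak m_{k+1}$ on arguments taken from $\Gamma (\ell) = \Gamma (\wedge^0 N_\ell S \otimes \ell)$ (degree $-1$) and $\Gamma (NS) = \Gamma (\wedge^1 N_\ell S \otimes \ell)$ (degree $0$), with at most two of them of degree $-1$; this splits into exactly the three cases (\ref{eq:CF1}), (\ref{eq:CF2}), (\ref{eq:CF3}). Recall that $I$ sends $\lambda \in \Gamma (\ell)$ to the fiber-wise constant section $\pi^\ast\lambda$ (a degree $-1$ element of $(\Der^\bullet L_{NS})[1]$) and sends $s \in \Gamma (NS)$ to the derivation $I(s) = \D_{\pi^\ast s}$, which I write $\D_s$. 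The computation hinges on three elementary facts about $\D$: (i) $\D_s$ annihilates fiber-wise constant sections; (ii) $\D_s$ maps the fiber-wise linear section corresponding to $\varphi \in \Gamma (N_\ell {}^\ast S)$ to the fiber-wise constant section $\langle s, \varphi\rangle \in \Gamma (\ell)$; (iii) the $\D_{s_i}$ pairwise commute (flatness of $\D$ and $[\pi^\ast s_i, \pi^\ast s_j] = 0$).

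Next I would introduce $\square_m := [\cdots [[J, \D_{s_1}]^{SJ}, \D_{s_2}]^{SJ}\cdots, \D_{s_m}]^{SJ}$, still an alternating first order bi-differential operator (bracketing one with a derivation produces one). Generalizing (\ref{eq:DeltaJ}), any alternating first order bi-differential operator $\square'$ satisfies the Leibniz-type rule
\[
[\square', \D_s]^{SJ}(\mu_1, \mu_2) = \square'(\D_s\mu_1, \mu_2) + \square'(\mu_1, \D_s\mu_2) - \D_s\!\left(\square'(\mu_1, \mu_2)\right),
\]
and applying this at each stage of the iteration, with (i)--(iii), one gets: on two fiber-wise constant sections only the ``all-outside'' term survives, so $\square_m(\pi^\ast\lambda, \pi^\ast\nu) = (-)^m\D_{s_1}\cdots\D_{s_m}\{\lambda, \nu\}$; on one fiber-wise constant section $\pi^\ast\lambda$ and one fiber-wise linear section $\varphi$, at most one $\D_{s_i}$ can land on $\varphi$ (subsequent $\D$'s then kill the fiber-wise constant section it produces), giving $\square_m(\pi^\ast\lambda, \varphi) = (-)^m\big(\D_{s_1}\cdots\D_{s_m}\{\lambda, \varphi\} - \sum_i \D_{s_1}\cdots\widehat{\D_{s_i}}\cdots\D_{s_m}\{\lambda, \langle s_i, \varphi\rangle\}\big)$; and on two fiber-wise linear sections $\varphi, \psi$, at most one $\D_{s_i}$ lands on $\varphi$ and at most one other $\D_{s_j}$ on $\psi$, producing the all-outside term, a double sum (both land) and single sums (exactly one lands), with relative sign $(-)^{m-p}$ when $p$ derivations land. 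These are exactly the bracketed expressions on the right of (\ref{eq:CF1})--(\ref{eq:CF3}).

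It remains to glue on the outer brackets with the degree $-1$ arguments and apply $P$. For (\ref{eq:CF1}), the computation of Lemma \ref{lem:GM1}.(1) --- which uses only the explicit Schouten--Jacobi formulas of Appendix \ref{sec:app} and holds for any alternating first order bi-differential operator --- gives $[[\square_{k-1}, \pi^\ast\lambda]^{SJ}, \pi^\ast\nu]^{SJ} = -\square_{k-1}(\pi^\ast\lambda, \pi^\ast\nu) \in \Der^0 L_{NS} = \Gamma (L_{NS})$, and since $P$ in degree $-1$ is the restriction $\Gamma (L_{NS}) \to \Gamma (\ell)$, the first expansion gives (\ref{eq:CF1}). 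For (\ref{eq:CF2}), the analogue of (\ref{eq:trianglelambda}) gives $[\square_k, \pi^\ast\lambda]^{SJ} = -\square_k(\pi^\ast\lambda, -) \in \Der\, L_{NS}$; the definition of $P$ through $P_0$, the adjoint of $\gamma : N_\ell{}^\ast S \INTO J^1 L$, identifies $\langle P(\Delta), \varphi\rangle$ with $\langle\Delta, \gamma(\varphi)\rangle|_S$, and taking $\Delta = -\square_k(\pi^\ast\lambda, -)$ and $\varphi$ also as the corresponding fiber-wise linear section (which lies in $\Gamma_S$, so $\langle\Delta, \gamma(\varphi)\rangle|_S = \Delta(\varphi)|_S$) this equals $-\square_k(\pi^\ast\lambda, \varphi)|_S$, so the second expansion gives (\ref{eq:CF2}). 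For (\ref{eq:CF3}) no outer bracket is needed: $\mathfrak m_{k+1}(s_1, \ldots, s_{k+1}) = P(\square_{k+1})$, and the same description of $P$ --- pair with $\gamma(\varphi)$, $\gamma(\psi)$ and restrict to $S$ --- turns the third expansion into (\ref{eq:CF3}).

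The only genuinely delicate part is sign bookkeeping: keeping track of the signs in the Schouten--Jacobi Leibniz rule, in the Lemma \ref{lem:GM1}.(1)-type collapse, and in the $[1]$-shift of the derived-bracket construction, together with the combinatorial factor $(-)^{m-p}$ recording how many of the $\D_{s_i}$ act inside. Threading these through the iteration is precisely what produces the overall prefactors $(-)^k$, $-(-)^k$, $-(-)^k$ and the alternating signs in front of the omitted-index sums. Everything else is mechanical once the conventions of Appendices \ref{sec:app_0} and \ref{sec:app} are fixed.
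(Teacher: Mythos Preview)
Your proof is correct and follows essentially the same approach as the paper's own proof, which appeals to (\ref{eq:higher_derived_brackets}), (\ref{eq:Jlm}), (\ref{eq:DeltaJ}), and the elementary facts $[\Delta,\lambda]^{SJ}=\Delta(\lambda)$ and $\D_s\varphi=\langle s,\varphi\rangle$ (hence $\D_{s_1}\D_{s_2}\varphi=0$). The paper leaves the iteration and the identification of $P$ with ``evaluate on fiber-wise linear sections and restrict to $S$'' implicit, whereas you make both explicit via the auxiliary operators $\square_m$ and the adjointness of $P_0$ to $\gamma$; this is a matter of detail rather than a different route.
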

\begin{proof}
	Equation (\ref{eq:CF1}) immediately follows from (\ref{eq:higher_derived_brackets}), (\ref{eq:Jlm}), and the easy remark that $[\Delta, \lambda]^{SJ} = \Delta (\lambda)$ for all $\Delta \in  \Der L_{NS} = \Der^1 L_{NS}$, and $\lambda \in \Gamma (L_{NS}) = \Der^0 L_{NS}$. Equation (\ref{eq:CF2}) follows from (\ref{eq:higher_derived_brackets}), (\ref{eq:DeltaJ}), and the obvious remark that
	$ \langle s, \varphi \rangle = \D_s \varphi $,
	hence $\D_{s_1} \D_{s_2} \varphi = 0$, for all $s, s_1, s_2 \in \Gamma (NS)$, and $\varphi \in \Gamma (N_\ell {}^\ast S)$. Equation (\ref{eq:CF3}) can be proved in a similar way.
\end{proof}

Let $z^\alpha$ be local coordinates on $M$, and let $\mu$ be a local generator of $\Gamma (L)$.
Define local sections $\mu^\ast$ and $\nabla_\alpha$ of $J_1 L$ by putting
\begin{equation*}
	\mu^\ast(f\mu)= f,\quad \nabla_\alpha(f\mu)=\partial_\alpha f,
\end{equation*}
where $f\in C^\infty(M)$, and $\partial_\alpha = \partial / \partial z^\alpha$.
Then $ \Gamma (\wedge^\bullet J_1 L)$ is locally generated, as a $C^\infty (M)$-module, by
\begin{equation*}
	\nabla_{\alpha_1}\wedge\cdots\wedge\nabla_{\alpha_k},\qquad\nabla_{\alpha_1}\wedge\cdots\wedge\nabla_{\alpha_{k-1}}\wedge\mu^\ast, \quad k>0,
\end{equation*}
with $\alpha_1<\cdots<\alpha_k$. In particular, any $\Delta\in  \Gamma (\wedge^\bullet J_1 L)$ is locally expressed as
\begin{equation*}
	\Delta=X^{\alpha_1\cdots \alpha_k}\nabla_{\alpha_1}\wedge\cdots\wedge\nabla_{\alpha_k}+g^{\alpha_1\cdots \alpha_{k-1}}\nabla_{\alpha_1}\wedge\cdots\wedge\nabla_{\alpha_{k-1}}\wedge\mu^\ast,
\end{equation*}
where $X^{\alpha_1\cdots \alpha_k},g^{\alpha_1\cdots \alpha_{k-1}}\in C^\infty(M)$. Here and in what follows, we adopt the Einstein summation convention over pair of upper-lower repeated indexes.
Hence, $ (\Der^\bullet L)[1]$ is locally generated, as a $C^\infty(M)$-module, by
\begin{equation*}
	\nabla_{\alpha_1}\wedge\cdots\wedge\nabla_{\alpha_k}\otimes\mu,\qquad\nabla_{\alpha_1}\wedge\cdots\wedge\nabla_{\alpha_{k-1}}\wedge\operatorname{id}, \quad k > 0,
\end{equation*}
with $\alpha_1<\cdots<\alpha_k$, and any $\square\in \mathcal  (\Der^\bullet L)[1]$ is locally expressed as
\begin{equation*}
	\square=X^{\alpha_1\cdots \alpha_k}\nabla_{\alpha_1}\wedge\cdots\wedge\nabla_{\alpha_k}\otimes\mu+g^{\alpha_1\cdots \alpha_{k-1}}\nabla_{\alpha_1}\wedge\cdots\wedge\nabla_{\alpha_{k-1}}\wedge\operatorname{id}.
\end{equation*}

\begin{remark}
	\label{oss:components_of_J}
	Let $J\in \Der^2 L$. Locally,
	\begin{equation}
		\label{eq:components_of_J}
		J=J^{\alpha\beta}\nabla_{\alpha}\wedge\nabla_{\beta}\otimes\mu+J^\alpha\nabla_\alpha\wedge\operatorname{id},
	\end{equation}
	for some local functions $J^{\alpha \beta}, J^\alpha$.
\end{remark}

Now, identify $L_{NS}$ with its image in $L$ under $\tau$ and assume that:
\begin{itemize}
	\item coordinates $z^\alpha$ are fibered, i.e.~$(z^\alpha) = (x^i,y^a)$, with $x^i$ coordinates on $S$, and $y^a$ linear coordinates along the fibers of $\pi : NS \to S$,
	\item the local generator $\mu$ is fiber-wise constant so that, locally, $\Gamma(\ell) \subset \Gamma (L_{NS})$ consists exactly of sections $\lambda$ such that $\nabla_a\lambda=0$.
\end{itemize}
In particular, the local expression (\ref{eq:components_of_J}) for $J$ expands as
\begin{align}
	\label{eq:components_of_J_bis}
	J &= \left( J^{a b}\nabla_a\wedge\nabla_b +2J^{ai}\nabla_a\wedge\nabla_i +J^{ij}\nabla_i\wedge\nabla_j \right) \otimes\mu\\
	\notag &\quad + \left( J^a\nabla_a +J^i\nabla_i \right)\wedge\operatorname{id}.
\end{align}
We have the following

\begin{corollary}
	\label{prop:multi-brackets_coordinates}
	Locally, the multi-bracket $\mathfrak m_{k+1}$ is uniquely determined by
	\begin{align*}
		&\mathfrak{m}_{k+1}\left(\partial_{a_1},\dots,\partial_{a_{k+1}} \right) =-(-)^k \partial_{a_1} \cdots \partial_{a_{k+1}}J^{ab} \big|_S \delta_a\wedge\delta_b\otimes\mu,\\
		&\mathfrak{m}_{k+1}\left( \partial_{a_1},\dots,\partial_{a_{k}}, f \mu \right) =(-)^k \partial_{a_1} \cdots \partial_{a_{k}}\left.\left(2J^{ai}\partial_i f+J^a f\right)\right|_S \partial_a,\\
		&\mathfrak{m}_{k+1}\left(\partial_{a_1},\dots,\partial_{a_{k-1}},f \mu,g \mu \right)=(-)^k \partial_{a_1}\cdots \partial_{a_{k-1}}\left.\left[2J^{ij} \partial_i f \partial_j g - J^i(f\partial_i g-g \partial_i f)\right]\right|_
			S \mu,
	\end{align*}
	where $f, g \in C^\infty (S)$, and $\delta_a := \partial_a \otimes \mu^\ast$.
\end{corollary}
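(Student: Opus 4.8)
The plan is to deduce the three local formulas directly from the coordinate-free expressions in Proposition \ref{prop:CF}, specializing the arguments to coordinate data and expanding the Jacobi brackets that occur by means of the local expression (\ref{eq:components_of_J_bis}) for $J$.

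Before substituting I would record three elementary observations that make the translation mechanical. First, by Remark \ref{rem:multi-brackets_coordinates_1} each $\mathfrak m_{k+1}$ is a first-order multi-differential operator with scalar-type symbol in every entry, and at most two of its arguments can sit in degree $-1$; hence $\mathfrak m_{k+1}$ is completely determined by the three families of values listed in the statement, and it suffices to compute those. Second, under the standing assumptions ($z^\alpha = (x^i,y^a)$ fibered, $\mu$ fiber-wise constant) a section of $\Gamma(\ell) = \Gamma(\wedge^0 N_\ell {}^\ast S \otimes \ell)$ is $f\mu$ with $f \in C^\infty(S)$ (so $\partial_a f = 0$), a local section of $\Gamma(NS) = \Gamma(\wedge^1 N_\ell S \otimes \ell)$ is a combination of the $\partial_a$, and a section $\varphi$ of $N_\ell {}^\ast S$ is fiber-wise linear, $\varphi = \varphi_a y^a \mu$ with $\varphi_a \in C^\infty(S)$. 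Third, on $\Gamma(L_{NS})$ the flat connection $\mathbb D$ satisfies $\mathbb D_{\pi^\ast \partial_a}(h\mu) = (\partial_a h)\mu$, i.e.~in the frame $\mu$ it acts as $\partial_a = \partial/\partial y^a$; in particular $\langle \partial_a, \varphi\rangle = \mathbb D_{\pi^\ast\partial_a}\varphi = \varphi_a\mu$ and $\mathbb D_{\pi^\ast\partial_a}\mathbb D_{\pi^\ast\partial_b}\varphi = 0$.

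With these in hand the computation is routine. For (\ref{eq:CF1}) I put $s_i = \partial_{a_i}$, $\lambda = f\mu$, $\nu = g\mu$: expanding $\{f\mu, g\mu\} = J(f\mu, g\mu)$ through (\ref{eq:components_of_J_bis}) and using $\partial_a f = \partial_a g = 0$, only the $J^{ij}$ and $J^i$ terms survive, and applying $\mathbb D_{\pi^\ast\partial_{a_1}}\cdots\mathbb D_{\pi^\ast\partial_{a_{k-1}}}$ (which hits only the coefficient functions, $f$ and $g$ being $y$-independent) and restricting to $S$ gives the first displayed formula. For (\ref{eq:CF2}) I take $\lambda = f\mu$ and $\varphi$ fiber-wise linear; expanding $\{f\mu, \varphi\}$ and the $\{f\mu, \langle \partial_{a_i}, \varphi\rangle\}$ through (\ref{eq:components_of_J_bis}), using $\partial_a(\varphi_b y^b) = \varphi_a$ and $\partial_i(\varphi_b y^b)|_S = 0$ together with $\partial_a f = 0$, the correction sum in (\ref{eq:CF2}) cancels exactly the unwanted contributions and one is left with $2J^{ai}\partial_i f + J^a f$ as the coefficient. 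Formula (\ref{eq:CF3}) is handled identically, pairing against the local frame $dy^a\otimes\mu$ of $N_\ell {}^\ast S$; here the surviving coefficient is the purely normal one, $J^{ab}$.

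The routine but genuinely delicate point --- the main obstacle --- is the bookkeeping in the last two formulas: one must expand the first-order Jacobi brackets of the fiber-wise constant section $f\mu$ with the fiber-wise linear sections $\varphi, \psi$ using all five families of local coefficients of (\ref{eq:components_of_J_bis}), and verify that precisely the correction terms built into (\ref{eq:CF2})--(\ref{eq:CF3}) (the sums over omitted $\mathbb D_{s_i}$, together with $\langle s_i, \varphi\rangle = \mathbb D_{s_i}\varphi$ and $\mathbb D_{s_1}\mathbb D_{s_2}\varphi = 0$) annihilate every term that is neither of the advertised shape nor killed by restriction to $S$. Tracking the signs $(-)^k$ and the combinatorial factors (the $2$'s in $2J^{ai}$, $2J^{ij}$ arising from the antisymmetrization in $\nabla_a\wedge\nabla_b$ and $\nabla_i\wedge\nabla_j$) accounts for essentially all of the effort; there is no conceptual difficulty once the observations above are in place, and coisotropicity of $S$ enters only through Proposition \ref{prop:CF}, which already presupposes it.
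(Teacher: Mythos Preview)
Your proposal is correct and follows exactly the approach the paper intends: the corollary is stated without an explicit proof, as an immediate specialization of Proposition~\ref{prop:CF} to the adapted local coordinates and the local expression~(\ref{eq:components_of_J_bis}) for $J$. The observations you record (determination by generators via Remark~\ref{rem:multi-brackets_coordinates_1}, $\partial_a f = 0$ for $f\in C^\infty(S)$, $\mathbb D_{\pi^\ast\partial_a}$ acting as $\partial_a$ in the frame $\mu$, and $\langle\partial_a,\varphi\rangle = \varphi_a\mu$) are precisely the ingredients needed to carry out the substitution, and the cancellations you describe are the expected ones.
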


\subsection{Independence of the tubular embedding} \label{sec:gauge_invariance}
Now we show that, as already in the symplectic \cite[Appendix]{OP2005}, the Poisson \cite{CS2008}, and the l.c.s.~\cite[Theorem 9.5]{LO2012} cases, the $L_\infty$-algebra in Proposition~\ref{prop:linfty} does not really depend on the choice of a fat tubular neighborhood, in the sense clarified by Proposition \ref{prop:gauge_invariance} below. As a consequence, its $L_\infty$-isomorphism class is an \emph{invariant} of the coisotropic submanifold.

\begin{proposition}
	\label{prop:gauge_invariance}
	Let $S$ be a coisotropic submanifold of the Jacobi manifold $(M,L,J \equiv \{-,-\})$.
	Then the $L_\infty$-algebra structures on $\Gamma (\wedge^\bullet N_\ell S \otimes \ell)[1]$ associated to different choices of the fat tubular neighborhood $L_{NS} \INTO L$ of $\ell$ in $L$ are $L_\infty$-isomorphic.
\end{proposition}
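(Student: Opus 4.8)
The plan is to show that two fat tubular neighborhoods produce $L_\infty$-algebra structures that are related by an $L_\infty$-isomorphism, following the strategy used in the symplectic, Poisson, and l.c.s.\ cases but carried out in the line bundle setting. First I would reduce to an infinitesimal/one-parameter statement: given two fat tubular neighborhoods $\tau_0, \tau_1 : L_{NS} \INTO L$ of $\ell$ over tubular neighborhoods $\underline\tau_0, \underline\tau_1$, I would connect them by a smooth path $\{\tau_t\}_{t \in [0,1]}$ of fat tubular neighborhoods (shrinking $NS$ if necessary), using that the space of tubular neighborhoods of $S$ is contractible and that, fibrewise, the line bundle identifications can be interpolated as well. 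Differentiating the family $\tau_t$ produces a time-dependent derivation-type vector field generating the isotopy, and I would track how the $V$-data $((\Der^\bullet L_{NS})[1], \operatorname{im} I_t, P_t, J_t)$ of Proposition \ref{prop:linfty} vary with $t$. The key point is that while $J_t = (\tau_t^{-1})_\ast J$ and the projection $P_t$ and inclusion $I_t$ all move, they move by an inner automorphism of the ambient graded Lie algebra $((\Der^\bullet L_{NS})[1], [-,-]^{SJ})$, namely the push-forward along the isotopy of line bundle embeddings.

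The central computation is then to invoke the homotopy-invariance of Voronov's derived-bracket construction: an isotopy of $V$-data related by graded Lie algebra automorphisms induces an $L_\infty$-isomorphism between the resulting $L_\infty$-algebras on $\Gamma(\wedge^\bullet N_\ell S \otimes \ell)[1]$. Concretely I would write down the generating vector field of the isotopy as (the $L$-valued analogue of) a section whose flow realizes $\tau_t$, split it according to $\operatorname{im} I_t \oplus \ker P_t$, and use Formula (\ref{Lie2}) together with the fact that $\ker P$ is a Lie subalgebra and $\operatorname{im} I$ is abelian (both established in the proof of Proposition \ref{prop:linfty}) to check that the transported $V$-data still satisfy the $V$-data axioms for every $t$. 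The $L_\infty$-morphism is then obtained by integrating the corresponding infinitesimal $L_\infty$-morphism (equivalently, as a composition/limit of the time-$t$ flows), exactly as in \cite[Appendix]{OP2005}, \cite{CS2008}, and \cite[Theorem 9.5]{LO2012}; since all the structure maps involved ($P$, $I$, $[-,-]^{SJ}$, and the isotopy generator) are first order multi-differential operators, the construction stays inside the relevant subcomplex and everything is well-defined.

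The main obstacle I anticipate is purely technical rather than conceptual: making the interpolation $\{\tau_t\}$ of \emph{fat} tubular neighborhoods genuinely smooth and ensuring all the objects remain defined on a common neighborhood of $S$ as $t$ varies. In the trivial line bundle cases one only needs to isotope tubular neighborhoods of $S$ in $M$, whereas here one must simultaneously isotope the vector bundle embeddings $L_{NS} \INTO L$ covering them, compatibly with the commuting diagram in the definition of fat tubular neighborhood; I would handle this by first isotoping the underlying $\underline\tau_t$ and then, over each fixed $\underline\tau_t$, using the contractibility of the fibres of $NS \to S$ (as in the existence lemma for fat tubular neighborhoods) to interpolate the line bundle part, possibly after an initial reduction to the case where $\underline\tau_0 = \underline\tau_1$. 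Once this smooth family is in hand, the rest is a formal application of the derived-bracket homotopy-transfer machinery, and I would not expect further surprises; in particular the line-bundle-valued Cartan calculus contributes nothing new beyond bookkeeping, since all the needed Leibniz and naturality identities for $[-,-]^{SJ}$ and push-forwards are already recorded in Remark \ref{rem:inf_aut} and Appendix \ref{sec:app}.
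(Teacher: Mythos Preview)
Your plan is essentially the paper's own strategy: connect the two fat tubular neighborhoods by an isotopy, differentiate to get a time-dependent derivation, and appeal to a gauge-equivalence theorem for $V$-data (the paper invokes this explicitly as the Cattaneo--Sch\"atz theorem, Theorem~\ref{theor:CS}). Two refinements in the paper's execution are worth noting. First, $P$ and $I$ do \emph{not} depend on the fat tubular neighborhood---they are built from the vector bundle $\pi:NS\to S$ and $L_{NS}=\pi^*\ell$ alone---so only the Maurer--Cartan element $J_t=(\tau_t^{-1})_*J$ moves; this puts you squarely in the hypotheses of Theorem~\ref{theor:CS} with fixed $(\mathfrak h,\mathfrak a,P)$ and varying $\Delta_t$, rather than an isotopy of the full $V$-data. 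Second, the paper handles the isotopy via Lemma~\ref{lem:isotopy}, which says two fat tubular neighborhoods differ by a smooth path \emph{followed by} an automorphism $\psi:L_{NS}\to L_{NS}$ over $\id_S$; this yields a clean two-case split (automorphism: strict $L_\infty$-iso; path: Cattaneo--Sch\"atz). For the domain issue you correctly anticipate---$\varphi_t=\tau_t^{-1}\circ\tau_0$ is only defined near $S$---the paper does not shrink $NS$ but instead passes to the $I(S)$-adic completion $\Der_{\mathrm{for}}^\bullet L_{NS}$ (the formal neighborhood of $S$), notes via Corollary~\ref{prop:multi-brackets_coordinates} that the $L_\infty$-algebra is unchanged, and applies Theorem~\ref{theor:CS} there; your shrinking approach would also work, but the formal trick sidesteps any uniformity argument.
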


The proof is an adaptation of the one given by Cattaneo and Sch\"atz in the Poisson setting (see Subsections 4.1 and 4.2 of \cite{CS2008}, see also Remark~\ref{rem:CS} below) and it is based on Theorem 3.2 of \cite{CS2008} and the fact that \emph{any two fat tubular neighborhoods are isotopic} (in the sense of Lemma \ref{lem:isotopy} below). Before proving Proposition \ref{prop:gauge_invariance}, let us recall Cattaneo--Sch\"atz Theorem. We will present a ``minimal version'' of it, adapted to our purposes. The main ingredients are the following.

We work in a category of real topological vector spaces. Let $(\mathfrak h,\mathfrak a, P, \Delta_0)$ and $(\mathfrak h, \mathfrak a, P, \Delta_1)$ be $V$-data \cite{FZ2012}. We identify $\mathfrak a$ with the target space of $P$. Note that $(\mathfrak h,\mathfrak a, P, \Delta_0)$ and $(\mathfrak h, \mathfrak a, P, \Delta_1)$ differ by the last entry only. Voronov construction associates $L_\infty$-algebras to $(\mathfrak h,\mathfrak a, P, \Delta_0)$ and $(\mathfrak h, \mathfrak a, P, \Delta_1)$. Denote them $\mathfrak a_0$ and $\mathfrak a_1$ respectively. Cattaneo and Sch\"atz main idea is proving that if
\begin{itemize}
	\item $\Delta_0$ and $\Delta_1$ are gauge equivalent elements of the graded Lie algebra $\mathfrak h$, and
	\item they are intertwined by a gauge transformation preserving $\ker P$,
\end{itemize}
then $\mathfrak a_0$ and $\mathfrak a_1$ are $L_\infty$-isomorphic. Specifically, $\Delta_0$ and $\Delta_1$ are \emph{gauge equivalent} if they are interpolated by a smooth family $\{ \Delta_t \}_{t \in [0,1]}$ of elements $\Delta_t \in \mathfrak h$, and there exists a smooth family $\{ \xi_t \}_{t \in [0,1]}$ of degree zero elements $\xi_t \in \mathfrak h$ such that the following evolutionary differential equation is satisfied:
\begin{equation}\label{eq:gaugeMC00}
	\frac{d}{dt} \Delta_t = [\xi_t , \Delta_t].
\end{equation}
One usually assumes that the family $\{ \xi_t \}_{t \in [0,1]}$ integrates to a family\linebreak $\{ \phi_t \}_{t \in [0,1]}$ of automorphisms $\phi_t : \mathfrak h \to \mathfrak h$ of the Lie algebra $\mathfrak h$, i.e.~$\{ \phi_t \}_{t \in [0,1]}$ is a solution of the Cauchy problem
\begin{equation}
	\label{eq:Cauchy}
	\left\{
	\begin{aligned}
		& \frac{d}{dt}\phi_t (-)=[\phi_ t (-), \xi_t] \\
		& \phi_0= \id
	\end{aligned}
	\right. .
\end{equation}

Finally we say that $\Delta_0$ and $\Delta_1$ are intertwined by a gauge transformation preserving $\ker P$ if the family $\{ \xi_t \}_{t \in [0,1]}$ above satisfies the following conditions:
\begin{enumerate}
	\item the only solution $\{ a_t \}_{t \in [0,1]}$, where $a_t \in \mathfrak a$, of the Cauchy problem
	\begin{equation}
		\label{eq:equivalence_2a}
		\left\{
		\begin{aligned}
			& \frac{d}{dt}a_t =P [a_t , \xi_t] \\
			& a_0 =0
		\end{aligned}
		\right.
	\end{equation}
	is the trivial one: $a_t = 0$ for all $t \in [0,1]$,
	\item $[ \xi_t, \ker P] \subset \ker P$ for all $t \in [0,1]$.
\end{enumerate}

\begin{theorem}[cf.~{\cite[Theorem 3.2]{CS2008}}]\label{theor:CS}
	Let $(\mathfrak h,\mathfrak a, P, \Delta_0)$ and $(\mathfrak h, \mathfrak a, P, \Delta_1)$ be $V$-data, and let $\mathfrak a_0$ and $\mathfrak a_1$ be the associated $L_\infty$-algebras. If $\Delta_0$ and $\Delta_1$ are gauge equivalent and they are intertwined by a gauge transformation preserving $\ker P$, then $\mathfrak a_0$ and $\mathfrak a_1$ are $L_\infty$-isomorphic.
\end{theorem}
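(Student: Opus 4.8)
The plan is to trivialize the one-parameter family of $L_\infty$-structures interpolating $\mathfrak a_0$ and $\mathfrak a_1$ by the flow of an evolutionary equation manufactured from $\{\xi_t\}$, in the spirit of Moser's trick; this is exactly the strategy of Cattaneo and Sch\"atz \cite{CS2008}, so I will only outline it. Recall first that, by Voronov's construction \cite{Voronov2005}, the derived brackets of $V$-data $(\mathfrak h,\mathfrak a,P,\Delta)$ assemble into a degree $+1$ codifferential $Q^\Delta$ on the symmetric coalgebra $\mathrm{Sym}(\mathfrak a[1])$, and an $L_\infty$-morphism $\mathfrak a_0\to\mathfrak a_1$ is precisely a morphism of differential graded coalgebras $(\mathrm{Sym}(\mathfrak a[1]),Q^{\Delta_0})\to(\mathrm{Sym}(\mathfrak a[1]),Q^{\Delta_1})$.

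First I would set up the path of $V$-data. Let $\{\Delta_t\}_{t\in[0,1]}$ be the family solving (\ref{eq:gaugeMC00}) that connects $\Delta_0$ to $\Delta_1$, and let $\{\phi_t\}_{t\in[0,1]}$ be the Lie-algebra automorphisms of $\mathfrak h$ integrating $\{\xi_t\}$ via (\ref{eq:Cauchy}). Condition (2), $[\xi_t,\ker P]\subseteq\ker P$, integrates to $\phi_t(\ker P)=\ker P$; since $\Delta_t$ is obtained from $\Delta_0$ by the flow of $\{\xi_t\}$, it lies in $\phi_t(\ker P)=\ker P$ for all $t$, so each $(\mathfrak h,\mathfrak a,P,\Delta_t)$ is genuine $V$-data, with codifferential $Q_t:=Q^{\Delta_t}$ and associated $L_\infty$-algebra $\mathfrak a_t$.

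Next I would build the trivializing flow. From $\xi_t$ form the degree-zero coderivation $\Xi_t$ of $\mathrm{Sym}(\mathfrak a[1])$ over the identity whose $n$-th Taylor coefficient sends $a_1\cdots a_n$ to $P[\cdots[[\xi_t,a_1],a_2]\cdots,a_n]$ (by cofreeness any such family of maps defines a coderivation over the identity). Differentiating $Q_t$ in $t$ and using (\ref{eq:gaugeMC00}) together with the graded Jacobi identity in $\mathfrak h$ to move $\xi_t$ inward, one obtains $\tfrac{d}{dt}Q_t=[\Xi_t,Q_t]$; here condition (2) is precisely what cancels the terms produced by the non-multiplicativity of $P$, i.e.~by $I\circ P=\operatorname{id}-\pi_{\ker P}\neq\operatorname{id}$, each such term carrying a factor $[\xi_t,\pi_{\ker P}(-)]\in\ker P$ that is killed by $P$. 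Then I would solve the Cauchy problem $\tfrac{d}{dt}F_t=\Xi_t\circ F_t$, $F_0=\operatorname{id}$, for a path $\{F_t\}$ of coalgebra endomorphisms of $\mathrm{Sym}(\mathfrak a[1])$: a short computation shows that $F_t\circ Q_0$ and $Q_t\circ F_t$ satisfy the same linear equation with the same initial value, hence agree, so that $F:=F_1$ is a morphism of dg coalgebras, that is, an $L_\infty$-morphism $\mathfrak a_0\to\mathfrak a_1$.

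The delicate point — and the main obstacle — is the well-posedness of the flow $\{F_t\}$ and the invertibility of $F$, because $\mathfrak a$ is only a (possibly infinite-dimensional) topological vector space, so ODE theory must be invoked by hand; this is where condition (1) enters. Restricted to linear elements, $\tfrac{d}{dt}F_t=\Xi_t\circ F_t$ becomes $\tfrac{d}{dt}F_t^{(1)}=\Xi_t^{(1)}\circ F_t^{(1)}$, $F_0^{(1)}=\operatorname{id}$, whose generator $\Xi_t^{(1)}(a)=P[\xi_t,a]$ is, up to a sign, exactly the operator appearing in the Cauchy problem (\ref{eq:equivalence_2a}); hypothesis (1) says that problem has only the trivial solution, which pins down $F_t^{(1)}$, while $\phi_t(\ker P)=\ker P$ forces $\phi_t(\mathfrak a)$ to be a complement of $\ker P$ in $\mathfrak h$, so that $P$ restricts to an isomorphism $\phi_t(\mathfrak a)\to\mathfrak a$ and $F_t^{(1)}$ is invertible. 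Propagating through the triangular system of linear ODEs obeyed by the higher Taylor coefficients $F_t^{(n)}$ — again well-posed thanks to (1) — the whole family $\{F_t\}$ is well defined, and a coalgebra endomorphism of $\mathrm{Sym}(\mathfrak a[1])$ with invertible linear part is automatically invertible. Hence $F=F_1$ is an $L_\infty$-isomorphism, as desired; the remaining bookkeeping, namely the identity $\tfrac{d}{dt}Q_t=[\Xi_t,Q_t]$ and the functional-analytic details, is carried out in \cite[\S 3]{CS2008}.
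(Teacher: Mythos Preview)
The paper does not supply a proof of this theorem: it is stated as a black box and attributed to Cattaneo and Sch\"atz \cite[Theorem 3.2]{CS2008}, then invoked in the proof of Proposition~\ref{prop:gauge_invariance}. Your sketch correctly summarizes the argument of the original reference---the Moser-type trivialization via the coderivation built from $\{\xi_t\}$, with condition~(2) ensuring the intertwining $\tfrac{d}{dt}Q_t=[\Xi_t,Q_t]$ and condition~(1) ensuring well-posedness and invertibility of the flow on Taylor coefficients---so there is nothing to compare beyond noting that you have supplied what the paper deliberately omitted.
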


The last ingredient needed to prove Proposition \ref{prop:gauge_invariance} is provided by the following

\begin{lemma}\label{lem:isotopy}
	Any two fat tubular neighborhoods $ \tau{}_0$ and $\tau{}_1$ of $S$ are \emph{isotopic}, i.e.~there is a smooth one parameter family of fat tubular neighborhoods $\mathcal T{}_t $ of $\ell$ in $L$, and an automorphism $\psi : L_{NS} \to L_{NS}$ of $L_{NS}$ covering an automorphism $ \underline \psi : NS \to NS$ of $NS$ over the identity, such that $\mathcal T_0 = \tau_0$, and $\mathcal T_1 = \tau_1 \circ \psi$.
\end{lemma}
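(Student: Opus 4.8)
The plan is to adapt the classical proof that any two tubular neighborhoods of a submanifold are isotopic (as in, e.g., the Poisson setting of Cattaneo--Sch\"atz) to the ``line bundle category''. First I would reduce to a linearized statement at the zero section. A fat tubular neighborhood $\tau : L_{NS} \INTO L$ covers a tubular neighborhood $\underline\tau : NS \INTO M$, and by restricting the differential of $\underline\tau$ to $S = \mathbf 0$ one gets, using the canonical splitting $T(NS)|_S = TS \oplus NS$, a vector bundle automorphism of $NS$ over $\id_S$, hence an automorphism of $NS$ over the identity of $S$; similarly, restricting $\tau$ to the fibers over $S$ and comparing with $i_L : \ell \INTO L$ yields a line bundle automorphism of $\ell$. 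Let me call these linearizations $\mathrm{lin}(\tau_0)$ and $\mathrm{lin}(\tau_1)$. The key preliminary reduction is: it suffices to treat the case $\mathrm{lin}(\tau_0) = \mathrm{lin}(\tau_1)$, because the difference of the two linearizations is precisely an automorphism of $L_{NS}$ over an automorphism of $NS$ over $\id_S$ (every automorphism of $NS$ over $\id_S$ extends to one over $\id_{NS}$ by acting fiber-wise, and the line bundle part extends by pullback since $L_{NS} = \pi^\ast\ell$), and absorbing it into the $\psi$ appearing in the statement replaces $\tau_1$ by $\tau_1 \circ \psi$, whose linearization now matches that of $\tau_0$.

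So assume $\mathrm{lin}(\tau_0) = \mathrm{lin}(\tau_1)$. The second step is the classical ``Alexander trick along the fibers''. For $s \in (0,1]$ let $m_s : NS \to NS$ be fiber-wise scalar multiplication by $s$, and cover it by the obvious line bundle isomorphism $\widetilde m_s : L_{NS} \to L_{NS}$ acting as the identity on the $\ell$-factor (recall $L_{NS} = NS \times_S \ell$). Define, for $s \in (0,1]$,
\[
\mathcal T_s := \widetilde m_{s}^{-1} \circ \big( \tau_s^{\mathrm{conv}} \big) \circ \widetilde m_s,
\]
where $\tau_s^{\mathrm{conv}}$ is the convex-combination interpolation between $\tau_0$ and $\tau_1$ built in a fixed auxiliary fat tubular neighborhood used as a chart near $S$ (equivalently, interpolate $\underline\tau_0$ and $\underline\tau_1$ in normal coordinates and lift). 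Because $\mathrm{lin}(\tau_0)=\mathrm{lin}(\tau_1)$, the difference $\underline\tau_0^{-1}\circ\underline\tau_1 - \id$ vanishes to second order along $S$, so the rescaled family $m_s^{-1}\circ(\cdots)\circ m_s$ extends smoothly to $s = 0$; standard Taylor estimates give that $\mathcal T_s$ has a smooth limit $\mathcal T_0$ as $s \to 0$, and one checks directly that this limit equals $\tau_0$ (its linearization at $S$ is $\mathrm{lin}(\tau_0)$ and it is fiber-wise linear, hence determined), while $\mathcal T_1 = \tau_1$. Throughout, each $\mathcal T_s$ is a genuine fat tubular neighborhood: it covers a tubular neighborhood of $S$, restricts to an isomorphism on fibers, and is compatible with $i_L$ and $\mathbf 0$, because all these properties are preserved by pre- and post-composition with the bundle isomorphisms $\widetilde m_s$.

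The main obstacle I anticipate is bookkeeping the line bundle data coherently, rather than any deep difficulty: one must make sure that the rescaling maps $\widetilde m_s$, the interpolation, and the extension of the linearization automorphism all respect the tower of compatibility conditions in the definition of fat tubular neighborhood (the commuting diagram with $\ell$, $i_L$, $\pi$, $\mathbf 0$, $i$), and in particular that ``fiber-wise linear'' is the right regularity to propagate so that the $s \to 0$ limit is clean. Once the $s\to 0$ extension is justified by the second-order vanishing along $S$ coming from $\mathrm{lin}(\tau_0)=\mathrm{lin}(\tau_1)$, the family $\{\mathcal T_t\}_{t\in[0,1]}$ together with $\psi$ (the accumulated linearization automorphism from the reduction step) is exactly the isotopy asserted in the lemma. \QED
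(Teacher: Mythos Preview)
Your approach is genuinely different from the paper's, and conceptually it is the right idea: you are essentially reproving the uniqueness part of the tubular neighborhood theorem directly in the line bundle category via the Alexander rescaling trick. The paper instead takes a more modular route: it invokes the classical tubular neighborhood theorem \cite[Theorem 5.3]{Hirsch} as a black box to obtain the isotopy $\underline{\mathcal T}_t$ and the automorphism $\underline\psi$ at the level of $NS$, and then concentrates only on the new content, namely lifting this isotopy to the line bundle. The lift is obtained by observing that the line bundle $L_{NS}^\ast \otimes_{NS} \underline{\mathcal T}^\ast L \to NS \times [0,1]$ is trivial (fibers over $S\times[0,1]$ are contractible and the bundle is canonically trivial there), so the partial nowhere-zero section determined by $\id_\ell$, $\tau_0$ and $\tau_1$ extends to a global nowhere-zero section, which is exactly the desired family $\mathcal T_t$. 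This is shorter and avoids redoing the analysis you sketch.

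There are, however, genuine execution problems in your argument that go beyond bookkeeping. First, your displayed formula $\mathcal T_s := \widetilde m_s^{-1}\circ \tau_s^{\mathrm{conv}}\circ \widetilde m_s$ does not typecheck: $\tau_s^{\mathrm{conv}}\circ\widetilde m_s$ lands in $L$, not in $L_{NS}$, so you cannot postcompose with $\widetilde m_s^{-1}:L_{NS}\to L_{NS}$. Presumably you intend to work in the chart given by an auxiliary fat tubular neighborhood, but then you should say so and write the conjugation there; as written the construction is undefined. Second, mixing a convex combination $\tau_s^{\mathrm{conv}}$ with the rescaling is redundant and muddles the logic: once linearizations agree, the rescaling $\widetilde m_s^{-1}\circ(\tau_0^{-1}\circ\tau_1)\circ\widetilde m_s$ already extends smoothly to $s=0$ with limit $\id$, giving the isotopy from $\tau_1$ to $\tau_0$ directly; no convex interpolation is needed, and introducing one forces you to separately argue that each $\tau_s^{\mathrm{conv}}$ is an embedding. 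Third, your remark that restricting $\tau$ to fibers over $S$ ``yields a line bundle automorphism of $\ell$'' is off: by the commuting diagram defining a fat tubular neighborhood, this restriction is forced to be $i_L$, i.e.\ the identity on $\ell$, so there is no line-bundle linearization to absorb into $\psi$; the only linearization freedom is the vector bundle automorphism of $NS$.
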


\begin{proof}
	In view of the tubular neighborhood Theorem \cite[Theorem 5.3]{Hirsch}, there is a smooth one parameter family of tubular neighborhoods $\underline{\mathcal T}{}_t : NS \INTO M$ of $S$ in $M$, and an automorphism $\underline \psi : NS \to NS$ over the identity such that $\underline{\mathcal T}{}_0 = \underline{\tau}{}_0$, and $\underline{\mathcal T}{}_1 = \underline \tau{}_1 \circ \underline \psi $.
	Denote by $\underline{\mathcal T}:NS\times[0,1] \to M$ the map defined by $\underline{\mathcal T} (\nu,t)=\underline{\mathcal T}{}_t(\nu)$ and consider the line bundle
	\[
	p : L_{NS}^\ast \otimes_{NS} \underline{\mathcal T}^\ast L \longrightarrow NS \times [0,1].
	\]
	Note that
	\begin{enumerate}
		\item fibers of $NS \times [0,1]$ over $S \times [0,1]$ are contractible,
		\item $L_{NS}^\ast \otimes_{NS} \underline{\mathcal T}^\ast L $ reduces to $\operatorname{End} \ell \times [0,1] = \R_{S \times [0,1]}$ over $S \times [0,1]$.
	\end{enumerate}
	It follows from 1) and 2) that $L_{NS}^\ast \otimes_{NS} \underline{\mathcal T}^\ast L$ is isomorphic to the pull-back over $NS \times [0,1]$ of the trivial line bundle $\R_{S \times [0,1]}$ over $S \times [0,1]$. In particular, $p$ is a trivial bundle. Moreover, $p$ admits a nowhere zero section $\upsilon$ defined on $(S\times[0,1])\cup(NS\times\{0,1\})$ and given by $\operatorname{id}_{\ell} $ on $S\times[0,1]$, by ${\mathcal T}_0$ on $NS\times\{0\}$ and by ${\mathcal T}_1$ on $NS \times \{ 1 \}$.
	By triviality, $\upsilon$ can be extended to a nowhere zero section $\Upsilon$ on the whole $NS \times [0,1]$. The section $\Upsilon$ is the same as a one parameter family of vector bundle isomorphisms $\Upsilon_t : L_{NS} \to \underline{\mathcal T}{}_t^\ast L$ over the identity of $NS$. Denote by ${\mathcal T}_t : L_{NS} \to L$ the composition
	\[
	L_{NS} \overset{\Upsilon_t}{\longrightarrow} \underline{\mathcal T}{}_t^\ast L \lhook\joinrel\longrightarrow L,
	\]
	where the second arrow is the natural inclusion. By construction, the ${\mathcal T}_t$'s are line bundle embeddings covering the $\underline{\mathcal T}{}_t$'s. Finally, there exists a unique automorphism $\psi : L_{NS} \to L_{NS}$ over $\underline \psi$ such that ${\mathcal T}_1 = \tau_1 \circ \psi$. We conclude that the ${\mathcal T}_t$'s and $\psi$ possess all the required properties.
\end{proof}

\begin{proof}[Proof of Proposition {\ref{prop:gauge_invariance}}]
	Let $\tau_0, \tau_1 \!:\! L_{NS} \!\INTO\! L$ be fat tubular neigh\-bor\-hoods over tubular neighborhoods $\underline \tau{}_0, \underline \tau{}_1 : NS \INTO M$. Denote by $J_0$ and $J_1$ the Jacobi brackets induced on $\Gamma (L_{NS})$ by $\tau_0$ and $\tau_1$ respectively, i.e.~$J_0 = (\tau_0^{-1})_\ast J$, and $J_1 = (\tau_1^{-1})_\ast J$ (see Remark \ref{rem:inf_aut} about pushing forward a multi-differential operator along a line bundle isomorphism). In view of Lemma~\ref{lem:isotopy} it is enough to consider the following two cases:
	
	{\textbf Case I:} \emph{$\tau_1 = \tau_0 \circ \psi$ for some automorphism $\psi : L_{NS} \to L_{NS}$ covering an automorphism $ \underline \psi : NS \to NS$ of $NS$ over the identity}. Obviously, $\psi$ identifies the $V$-data $((\Der^\bullet L_{NS})[1], \im I, P, J_0)$ and $((\Der^\bullet L_{NS})[1], \im I, P, J_1)$. As an immediate consequence, the $L_\infty$-algebra structures on $\Gamma (\wedge^\bullet N_\ell S \otimes \ell)[1]$ determined by $\tau_0$ and $\tau_1$ are (strictly) $L_\infty$-isomorphic.
	
	{\textbf Case II:} \emph{$\tau_0$ and $\tau_1$ are interpolated by a smooth one parameter family of fat tubular neighborhoods $\tau_t$.}
	Consider $\phi_t := \tau_t^{-1} \circ \tau_0 $. It is a local automorphism of $L_{NS}$ covering a local diffeomorphism $\underline \varphi{}_t = \underline \tau{}_t^{-1} \circ \underline \tau{}_0$, well defined in a suitable neighborhood of $S$ in $NS$, fixing $S$ point-wise and such that $\varphi_0 = \id$. Let $\xi_t$ be infinitesimal generators of the family $\{ \varphi_t \}$. They are derivations of $L_{NS}$ well defined around $S$. Our strategy is using $\xi_t$ and $\varphi_t$ to prove that $J_0$ and $J_1$ are gauge equivalent Maurer-Cartan elements of $(\Der^\bullet L_{NS})[1]$ intertwined by a gauge transformation preserving $\ker P$, and then applying Theorem \ref{theor:CS}. However, the $\varphi_t$'s are well-defined only around $S$ in $NS$. In order to remedy this minor drawback, we slightly change the graded space $\Der^\bullet L_{NS}$ underlying our $V$-data, passing to the graded space $\Der_{\text{for}}^\bullet L_{NS}$ of \emph{alternating, first order, multi-differential operators on $L_{NS}$ in a formal neighborhood of $S$ in $NS$}. The space $\Der_{\text{for}}^\bullet L_{NS}$ is defined as the inverse limit
	\[
	\lim_{\longleftarrow} \Der^\bullet L_{NS}/ I(S)^n \Der^\bullet L_{NS},
	\]
	where $I(S) \subset C^\infty (NS)$ is the ideal of functions vanishing on $S$. In a sense, $\Der_{\text{for}}^\bullet L_{NS}$ consists of ``Taylor series normal to $S$'' of multi-differential operators. Our $V$-data $((\Der^\bullet L_{NS})[1], \im I, P, J)$ induce in an obvious way new $V$-data $((\Der_{\text{for}}^\bullet L_{NS})[1], \im I_{\text{for}}, P_{\text{for}}, J_{\text{for}})$. In particular, $J_{\text{for}}$ is the class of $J$ in $(\Der_{\text{for}}^\bullet L_{NS})[1]$, and $I_{\text{for}} : \Gamma (\wedge^\bullet N_\ell S \otimes \ell)[1] \INTO (\Der_{\text{for}}^\bullet L_{NS})[1]$ is the natural embedding. Moreover, in view of Corollary \ref{prop:multi-brackets_coordinates}, the $L_\infty$-algebra determined by $((\Der^\bullet L_{NS})[1], \im I, P, J)$ does only depend on $J_{\text{for}}$. Therefore, the $V$-data $((\Der_{\text{for}}^\bullet L_{NS})[1], \im I_{\text{for}}, P_{\text{for}}, J_{\text{for}})$ and $((\Der^\bullet L_{NS})[1], \im I, P, J)$ determine the same $L_\infty$-algebra.
	
	Now, being well defined around $S$, the $\varphi_t$'s determine well-defined automorphisms $\phi_t := (\varphi_t)_\ast : (\Der_{\text{for}}^\bullet L_{NS})[1]\longrightarrow(\Der_{\text{for}}^\bullet L_{NS})[1]$ such that $\phi_0 = \id$. Similarly the $\xi_t$'s descend to zero degree elements of $(\Der_{\text{for}}^\bullet L_{NS})[1]$ which we denote by $\xi_t$ again. Clearly, the family $\{ \phi_t (J_0)_{\text{for}} \}$ interpolates between $(J_0)_{\text{for}}$ and $(J_1)_{\text{for}}$ and, in view of Equation (\ref{Lie2}), the $\phi_t$'s satisfy the Cauchy problem~(\ref{eq:Cauchy}). Finally,
	\begin{enumerate}
		\item from uniqueness of the one parameter family of automorphisms $\varphi_t$ generated by the one parameter family of derivation $\xi_t$, it follows that the Cauchy problem (\ref{eq:equivalence_2a}) possesses a unique solution,
		\item $\varphi_t |_{\ell} = \id$ so that the $\xi_t$'s vanish on $S$, hence $[\xi_t, \ker P ] \subset \ker P$ for all~$t$.
	\end{enumerate}
	The above considerations show that $(J_0)_{\text{for}}$ and $(J_1)_{\text{for}}$ are gauge equivalent and they are intertwined by a gauge transformation preserving $\ker P$. Hence, from Theorem \ref{theor:CS}, the $L_\infty$-algebra structures on $\Gamma (\wedge^\bullet N_\ell S \otimes \ell)[1]$ associated to the two choices $\tau_0$ and $\tau_1$ of the fat tubular neighborhood $L_{NS} \INTO L$ are actually $L_\infty$-isomorphic.
\end{proof}

\begin{remark}
	\label{rem:CS}
	In the contact case, as already in the l.c.s.~one, there exists a tubular neighborhood theorem for coisotropic submanifolds. As a consequence, the proof of Proposition \ref{prop:gauge_invariance} simplifies. In particular, it does not require using any \emph{formal neighborhood technique}.
\end{remark}

\section[Deformations of coisotropic submanifolds in Jacobi manifolds]{Deformations of coisotropic submanifolds in Jacobi manifolds}\label{sec:deform}
In this section, we introduce the notion of formal coisotropic deformation of a coisotropic submanifold (Definition \ref{def:coisoformal}). We prove that formal coisotropic deformations are in one-to-one correspondence with (degree $0$) Maurer-Cartan elements
of the associated $L_\infty$-algebra (Proposition \ref{prop:mcformal}). We also give a necessary and sufficient condition for the convergence of the Maurer-Cartan series $MC(s)$ for any smooth section $s$ (Proposition \ref{prop:fana}), extending a previous sufficient condition given by Sch\"atz and Zambon in the Poisson case \cite {SZ2012}. Analysing the notion of Hamiltonian equivalence of coisotropic deformations (Proposition \ref{prop:Hameq}) leads to a definition of Hamiltonian equivalence of formal deformations (Definition \ref{def:formHeq}). We show that Hamiltonian equivalence of formal coisotropic deformations coincides with gauge equivalence of the corresponding Maurer-Cartan elements (Proposition \ref{prop:MC_gauge}) and derive consequences of this fact (Theorem \ref{prop:fana1}, Corollary \ref{cor:infequi}). Finally we compare our results with related results obtained by other authors (Remarks~\ref{rem:moduli_1} and~\ref{rem:moduli_2}).

\subsection{Smooth coisotropic deformations}\label{sec:cois_def}
Let $(M, L, J \equiv \{-,-\})$ be a Jacobi manifold and let $S \subset M$ be a closed coisotropic submanifold. We equip $S$ with a fat tubular neighborhood $\tau : L_{NS} \INTO L$ and use it to identify $L_{NS}$ with its image. Accordingly, and similarly as above, from now on in this section, we abuse the notation and denote by $(L, J \equiv \{-,-\})$ (instead of $(L_{NS}, \tau_\ast^{-1} J)$) the Jacobi structure on $NS$ (unless otherwise specified). A $C^1$-small deformation of $S$ in $NS$ can be identified with a section $S \to NS$ of $NS$. We say that a section $s : S \to NS$ is \emph{coisotropic} if its image $s (S)$ is a coisotropic submanifold in $(NS, L, J)$.

\begin{definition}\label{def:smcoiso}
	A smooth one parameter family of smooth sections of $NS \to S$ starting from the zero section is a \emph{smooth coisotropic deformation of} $S$ if each section in the family is coisotropic. A section $s$ of $NS \to S$ is an \emph{infinitesimal coisotropic deformation of} $S$ if $\eps s$ is a coisotropic section up to infinitesimals $O(\eps^2)$, where $\eps$ is a formal parameter.
\end{definition}

\begin{remark}
	Let $\{ s_t \}$ be a smooth coisotropic deformation of $S$. Then
	\[
	\left. \frac{d}{dt}\right|_{t=0} s_t
	\]
	is an infinitesimal coisotropic deformation.
\end{remark}

Recall that a section $s:S \to NS$ is mapped, via $I : \Gamma (\wedge^\bullet N_\ell S \otimes \ell)[1] \to  (\Der^\bullet L)[1]$, to a derivation $I(s) := \mathbb D_{\pi^\ast s}$ of $L$, where $\pi : NS \to S$ is the projection. Let $\{\Phi_t\}$ be the one parameter group of automorphisms of $L$ generated by $I(s)$ and denote $\exp I (s) := \Phi_{1}$. Clearly $\exp I(s) (\nu , \lambda) = (\nu + s(x) , \lambda)$, for all $(\nu , \lambda) \in L = NS \times_S \ell$, $x = \pi (\nu)$. Further, let $\mathrm{pr} : J^1 L \to NS$ be the projection, denote by $j^1 \exp I(s) : J^1 L \to J^1 L$ the first jet prolongation of $\exp I(s)$, and consider the following commutative diagram
\[
\xymatrix@C=50pt{
	N_\ell {}^\ast S \ar[d] \ar@/^/[r]^{\gamma} & J^1 L \ar[d] \ar@/^/[r] ^{j^1 \exp I(s)} & J^1 L \ar[d] \\
	S\ar@/^/@<0.7 ex>[r]^{\mathbf 0} & NS \ar[l]^{\pi} \ar@/^/@<0.7 ex>[r]^{\underline{\exp I (s)}} & NS \ar[l]^{\underline{\exp I(-s)}}
}
\]
where $\mathbf 0$ is the zero section. Note that $s = \underline{\smash{\exp I(s)}} \circ \mathbf 0$.

\begin{proposition}\label{prop:coiss} Let $s : S \to NS$ be a section of $\pi$. The following three conditions are equivalent
	\begin{enumerate}
		\item $s$ is coisotropic,
		\item $P (\exp I (-s)_* J) = 0$ (cf.~\cite{SZ2012}),
		\item the vector bundle $\mathrm{pr} \circ j^1 \exp I(s) \circ \gamma : N_\ell {}^\ast S \to s(S)$ is a Jacobi subalgebroid of $J^1 L$.
	\end{enumerate}
\end{proposition}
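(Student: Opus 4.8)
The plan is to exploit that $\exp I(s)$ is a bona fide automorphism of the line bundle $L=L_{NS}=NS\times_S\ell$ which sends the zero section to the graph of $s$. As recalled just before the statement, $\exp I(s)(\nu,\lambda)=(\nu+s(\pi\nu),\lambda)$, so in particular $\exp I(s)$ covers a global diffeomorphism $\underline{\exp I(s)}\colon NS\to NS$ with $\underline{\exp I(s)}(S)=s(S)$, where $S$ is identified with the image of the zero section. By the very definition of the push-forward of a bi-differential operator (Remark \ref{rem:inf_aut}), the inverse $\exp I(-s)\colon (NS,L,J)\to (NS,L,\exp I(-s)_\ast J)$ is an \emph{isomorphism} of abstract Jacobi manifolds, and it maps $s(S)$ onto $S$. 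Since being coisotropic is invariant under Jacobi isomorphisms (immediately from the Lie-algebraic characterization in Lemma \ref{lem:cois}), we get at once that $s$ is coisotropic --- i.e.\ $s(S)$ is coisotropic in $(NS,L,J)$ --- if and only if $S$ is coisotropic in $(NS,L,\exp I(-s)_\ast J)$.

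To deduce $(1)\Leftrightarrow(2)$ I would then apply Proposition \ref{prop:PJ=0} to the Jacobi manifold $(NS,L,\exp I(-s)_\ast J)$. The key observation is that the projection $P\colon (\Der^\bullet L_{NS})[1]\to\Gamma(\wedge^\bullet N_\ell S\otimes\ell)[1]$ is manufactured out of the co-symbol embedding $\gamma\colon N_\ell{}^\ast S\hookrightarrow J^1L$ and the normal bundle $NS$ alone (see (\ref{eq:seq_P}) and Remark \ref{rem:PJ}), and is completely insensitive to the Jacobi bracket; hence it is the same $P$ for all Jacobi structures on $L\to NS$. Thus Proposition \ref{prop:PJ=0}, applied to $\exp I(-s)_\ast J$, says that $S$ is coisotropic in $(NS,L,\exp I(-s)_\ast J)$ exactly when $P(\exp I(-s)_\ast J)=0$, which is condition (2).

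For $(1)\Leftrightarrow(3)$ I would instead invoke Proposition \ref{prop:conormal} for the submanifold $s(S)\subset NS$: $s$ is coisotropic iff $(N_\ell{}^\ast s(S),\,\ell|_{s(S)})$ is a Jacobi subalgebroid of $(J^1L,L)$. The only point left to check is the identification of vector subbundles of $(J^1L)|_{s(S)}$,
\[
\operatorname{im}\bigl(j^1\exp I(s)\circ\gamma\bigr)=\gamma\bigl(N^\ast s(S)\otimes \ell|_{s(S)}\bigr)=N_\ell{}^\ast s(S),
\]
after which $\operatorname{pr}$ merely records that this is a bundle over $s(S)$, and condition (3) becomes literally the conclusion of Proposition \ref{prop:conormal}. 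To prove the displayed identity, recall that $N_\ell{}^\ast S|_x=\{\,j^1_x\lambda : \lambda\in\Gamma(L),\ \lambda|_S=0\,\}$ and that the first jet prolongation of $\phi:=\exp I(s)$ acts on jets by push-forward, $j^1\phi(j^1_x\lambda)=j^1_{\underline\phi(x)}(\phi_\ast\lambda)$; since $\lambda|_S=0$ if and only if $(\phi_\ast\lambda)|_{s(S)}=0$, the map $j^1\phi$ sends $N_\ell{}^\ast S$ bijectively onto $N_\ell{}^\ast s(S)$.

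The main obstacle I anticipate is precisely this last identification: one has to check carefully that $j^1\exp I(s)$ intertwines the co-symbol embeddings of the two conormal bundles (equivalently, that $j^1_x\lambda=\gamma_x\bigl((d\lambda)(x)\bigr)$ whenever $\lambda(x)=0$, so that jets of sections vanishing on a submanifold are exactly the image under $\gamma$ of its conormal bundle), and --- if one prefers to route $(2)\Leftrightarrow(3)$ through the prolongation --- that $j^1\exp I(s)$ is a morphism between the Jacobi algebroid structures on $J^1L$ induced by $\exp I(-s)_\ast J$ and by $J$. Everything else is bookkeeping with the definitions of $\exp I(s)$, of $P$, and of the push-forward of multi-differential operators, together with direct appeals to Propositions \ref{prop:PJ=0} and \ref{prop:conormal}.
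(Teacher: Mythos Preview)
Your proposal is correct and follows essentially the same route as the paper: for $(1)\Leftrightarrow(2)$ the paper encodes your Jacobi-invariance argument as the identity $P^s=P\circ\exp I(-s)_\ast$ (where $P^s$ is the analogue of $P$ built from the submanifold $s(S)$, a formulation it reuses later in Proposition~\ref{prop:Hameq}), and for $(1)\Leftrightarrow(3)$ it likewise identifies the bundle in question with the $\ell$-adjoint conormal bundle of $s(S)$ and invokes Proposition~\ref{prop:conormal}. Your jet-prolongation argument for that last identification is more explicit than what the paper writes, but the content is the same.
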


\begin{proof} 
	1) $\Longleftrightarrow$ 2). Let $P^s : \Der L \to\Gamma(NS)$ be the composition
	\begin{equation*}
		\Der L \overset{\sigma}{\longrightarrow} \mathfrak{X}(M) \longrightarrow \Gamma (TM |_{s(S)})\longrightarrow \Gamma (NS),
	\end{equation*}
	where the second arrow is the restriction, and the last arrow is the canonical projection (cf.~(\ref{eq:seq_P})). The surjection $P^s$ extends to a surjection of graded modules $(\Der^\bullet L)[1] \to \Gamma (\wedge^\bullet N_\ell S \otimes \ell)[1]$ which we denote again by $P^s$ (and is defined analogously as $P : (\Der^\bullet L)[1] \to \Gamma (\wedge^\bullet N_\ell S \otimes \ell)[1]$). By Proposition \ref{prop:PJ=0}, $s$ is coisotropic if and only if $ P^s (J) = 0$. Since
	\[
	\der \ell = \exp I (-s)_* \der L|_{s(S)} \quad \text{and} \quad \underline{\smash{\exp I(-s)}}_* NS = NS,
	\]
	we obtain
	\begin{equation}\label{eq:Ps}
		P^s = P \circ \exp I(-s)_*.
	\end{equation}
	In particular, $P^s (J) = P (\exp I(-s)_*J) = 0$ if and only if $s$ is coisotropic.
	
	1) $\Longleftrightarrow$ 3). Note that $\mathrm{pr} \circ j^1 \exp I(s) \circ \gamma : N_\ell {}^\ast S \to s(S)$ is the $\ell$-adjoint bundle of the normal bundle of $s(S)$ in $NS$. Now the claim follows immediately from Proposition \ref{prop:conormal}.
\end{proof}

\begin{remark}\label{rem:cois_Lambda}
	Let $s$ be a section of $NS$. In view of Remark \ref{rem:PJ}, $P^s (J) = P^s (\Lambda_J)$, where, in the right hand side, $P^s$ denotes the extension $\Gamma (\wedge^\bullet (T(NS) \otimes L^\ast) \otimes L) \to \Gamma (\wedge^\bullet N_\ell S \otimes \ell)$ of the composition $\mathfrak{X}(NS) \to \Gamma(T(NS)|_{s(S)}) \to\Gamma(NS)$ defined analogously as $P : (\Der^\bullet L)[1] \to \Gamma (\wedge^\bullet N_\ell S \otimes \ell)[1]$. Moreover, it is clear that
	\[
	\Lambda_{\exp I(-s)_\ast J} = \exp I(-s)_\ast \Lambda_J,
	\]
	where $\Lambda_{\exp I(-s)_\ast J} $ is the bi-symbol of $\exp I(-s)_\ast J$, and, in the right hand side,
	$$\exp I(-s)_\ast : \Gamma (\wedge^\bullet (T(NS) \otimes L^\ast)\otimes L) \to \Gamma (\wedge^\bullet (T(NS) \otimes L^\ast)\otimes L)$$
	denotes the isomorphism induced by the line bundle automorphism $\exp I(-s)$. It immediately follows that $s$ is coisotropic if and only if $P(\exp I (-s)_\ast \Lambda_J)=0$.
\end{remark}

\subsection{Formal coisotropic deformations}
Let $\eps$ be a formal parameter.

\begin{definition}
	A formal series $s (\eps) = \sum _{i =0} ^ \infty \eps ^i s_i \in \Gamma (NS) [[ \eps]]$, $s_i \in \Gamma (NS)$, such that $s_0 = 0$, is called a \emph{formal deformation of $S$}.
\end{definition}

The formal series $I (s (\eps)) : = \sum_{i =0}^\infty \eps ^i I (s_i) \in (\Der L) [[ \eps ]]$ is a formal derivation of $L$. It is easy to see that the space $(\Der L) [[\eps]]$ of formal derivations of $L$ is a Lie algebra, which has a linear representation in the space $ (\Der^\bullet L)[[\eps]]$ of formal first order multi-differential operators on $L$ via the following \emph{Lie derivative}:
\begin{equation}
	\Ll_{\xi(\eps)} \Delta (\eps) \equiv [\xi (\eps), \Delta (\eps)]^{SJ} := \sum _{ k =0} ^\infty \eps ^{k} \sum_{i+j = k} [\xi_i , \Delta_j]^{SJ},
\end{equation}
for $\xi (\eps)= \sum_{ i= 0} ^ \infty\eps ^i \xi_i$, $\xi_i \in  \Der L$, and $\Delta (\eps) = \sum _{ i =0}^\infty \eps ^ i \Delta_i$, $\Delta_i\in  \Der^\bullet L$.

We define the exponential of the Lie derivative $\Ll_{\xi (\eps)}$ as the following formal power series
\begin{equation}
	\exp \Ll_{\xi(\eps)} = \sum _{ n = 0}^\infty \frac{1}{n!} \Ll_{\xi (\eps)}^n.\label{eq:exp}
\end{equation}

Proposition \ref{prop:coiss} motivates the following

\begin{definition}\label{def:coisoformal} A formal deformation $s (\eps)$ of $S$ is said \emph{coisotropic}, if
	$P (\exp \Ll_{I(s (\eps))} J) = 0$.
\end{definition}

\begin{remark}
	Let $\xi (\eps) \in ( \der L) [[ \eps]]$. Define a Lie derivative
	\[
	\Ll_{\xi (\eps)} : \Gamma (\wedge^\bullet (T(NS) \otimes L^\ast) \otimes L) [[\eps]] \to \Gamma (\wedge^\bullet (T(NS) \otimes L^\ast) \otimes L) [[\eps]],
	\]
	in the obvious way. It is easy to see that
	\begin{equation}\label{eq:PJvsPLambda}
		P (\exp \Ll_{I(s (\eps))} J) = P (\exp \Ll_{I(s (\eps))} \Lambda_J),
	\end{equation}
	for all formal deformations $s (\eps)$ of $S$ (cf.~Remarks \ref{rem:PJ} and \ref{rem:cois_Lambda}). In particular, $s (\eps)$ is coisotropic if and only if $ P (\exp \Ll_{I(s (\eps))} \Lambda_J) = 0$.
\end{remark}

\begin{remark}[Formal deformation problem]
	The \emph{formal deformation problem for a coisotropic submanifold $S$} consists in finding formal coisotropic deformations of $S$. Let $s( \eps) = \sum_{i=0}^\infty \eps^i s_i$ be a formal coisotropic deformation of $S$. Then $s_1$ is an infinitesimal coisotropic deformation. On the other hand, in general, not all infinitesimal coisotropic deformations can be ``prolonged'' to a formal coisotropic deformation. If this is the case, one says that \emph{the formal deformation problem is unobstructed}. Otherwise, \emph{the formal deformation problem is obstructed}. The formal deformation problem of $S$ is \emph{governed by the $L_\infty$-algebra} $(\Gamma (\wedge^\bullet N_\ell S \otimes \ell)[1], \{ \mathfrak{m}_k\})$ in the sense clarified by the following proposition.
\end{remark}
\begin{proposition}\label{prop:mcformal} A formal deformation $s (\eps)$ of $S$ is coisotropic if and only if $-s (\eps)$
	is a solution of the (formal) Maurer-Cartan equation
	\begin{equation}\label{eq:MC}
		MC (-s (\eps)):=\sum _{ k =1} ^\infty \frac{1}{k!}\mathfrak{m}_k (-s (\eps), \dots , -s(\eps))= 0.
	\end{equation}
\end{proposition}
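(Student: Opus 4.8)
The plan is to reduce the claimed equivalence to the single identity
\[
MC(-s(\eps)) = P\bigl(\exp \Ll_{I(s(\eps))} J\bigr),
\]
after which the proposition follows at once by comparison with Definition \ref{def:coisoformal}. Everything takes place $\eps$-adically: since $s_0 = 0$, the formal derivation $I(s(\eps)) = \sum_i \eps^i I(s_i)$ has vanishing constant term, hence $\frak m_k(-s(\eps), \ldots, -s(\eps)) = O(\eps^k)$, both $\exp \Ll_{I(s(\eps))}$ and $MC(-s(\eps))$ converge, and the rearrangements below are legitimate coefficient by coefficient in $\eps$.

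First I would evaluate one higher derived bracket on the ``diagonal''. Set $\eta := I(s(\eps)) \in (\Der\, L)[[\eps]]$. Since $I$ is $\R$-linear we have $I(-s(\eps)) = -\eta$, so multilinearity of $[-,-]^{SJ}$ and the defining formula (\ref{eq:higher_derived_brackets}) give
\[
\frak m_k(-s(\eps), \ldots, -s(\eps)) = (-1)^k\, P[\cdots[[J, \eta]^{SJ}, \eta]^{SJ}\cdots, \eta]^{SJ}
\]
($k$ nested brackets). Now in the graded Lie algebra $\bigl((\Der^\bullet L)[1], [-,-]^{SJ}\bigr)$ the derivation $\eta$ has degree $0$, whereas $J$ — and therefore each iterated bracket $[\cdots[J, \eta]^{SJ}\cdots, \eta]^{SJ}$ — has degree $1$; so graded antisymmetry turns each bracketing against $\eta$ into $-\Ll_\eta = -[\eta, -]^{SJ}$, and the $k$-fold nested bracket equals $(-1)^k \Ll_\eta^k J$. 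The two factors $(-1)^k$ cancel, so
\[
\frak m_k(-s(\eps), \ldots, -s(\eps)) = P\bigl(\Ll_{I(s(\eps))}^k J\bigr).
\]

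Summing over $k \ge 1$ with the weights $1/k!$ and pulling the linear map $P$ through the series,
\[
MC(-s(\eps)) = \sum_{k\ge 1} \frac{1}{k!}\, P\bigl(\Ll_{I(s(\eps))}^k J\bigr) = P\bigl((\exp \Ll_{I(s(\eps))} - \id)\, J\bigr) = P\bigl(\exp \Ll_{I(s(\eps))} J\bigr) - P(J).
\]
Since $S$ is coisotropic we have $P(J) = 0$ by Proposition \ref{prop:PJ=0}, which establishes the displayed identity. Therefore $MC(-s(\eps)) = 0$ if and only if $P(\exp \Ll_{I(s(\eps))} J) = 0$, that is, if and only if $s(\eps)$ is a coisotropic formal deformation of $S$.

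The genuinely delicate part is the sign bookkeeping in the middle step: one must simultaneously keep track of the $(-1)^k$ produced by the $k$ insertions of $-\eta$, the $(-1)^k$ produced by converting $k$ right-bracketings by $\eta$ into powers of $\Ll_\eta$ via graded antisymmetry, and the degree shift built into $(\Der^\bullet L)[1]$; their cancellation is precisely why it is $-s(\eps)$, not $s(\eps)$, that solves the Maurer-Cartan equation. The remaining ingredients — $\eps$-adic convergence of the series (thanks to $s_0 = 0$) and $\R$-linearity of $I$ and of $P$ — are routine.
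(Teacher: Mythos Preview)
Your proof is correct and follows essentially the same approach as the paper: both reduce the statement to the identity $P(\Ll_{I(\xi)}^k J) = \frak m_k(-\xi,\ldots,-\xi)$ (the paper's (\ref{eq:mcl})), combined with $P(J)=0$ and the definition (\ref{eq:exp}) of the formal exponential. The paper simply asserts that (\ref{eq:mcl}) ``immediately follows from the definition of $\frak m_k$'', whereas you spell out the sign bookkeeping explicitly; otherwise the arguments coincide.
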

\begin{proof} The expression $MC (-s (\eps))$ should be interpreted as an element of $\Gamma (\wedge^\bullet N_\ell S \otimes \ell)[[ \eps]]$. The proposition is then a consequence of (\ref{eq:exp}), $P(J) = 0$, and the following identities
	\begin{equation} \label{eq:mcl}
		P (\Ll_{I(\xi)}^k J) = \mathfrak{m}_k (-\xi, \dots, -\xi) , \quad k \ge 1,
	\end{equation}
	for $\xi \in \Gamma (NS)$, which immediately follow from the definition of $\mathfrak{m}_k$.
\end{proof}

Let $s$ be a section of $NS$. The \emph{Maurer-Cartan series of $s$} is the series
\[
MC(-s) := \sum_{k=1}^\infty \frac{1}{k!} \mathfrak{m}_k (-s, \dots, -s).
\]
In general, $MC(-s)$ does not converge, not even for a coisotropic $s$. However, we have the obvious

\begin{corollary}
	Let $s$ be a section of $NS$ such that the \emph{Maurer-Cartan series} $MC(-s)$ converges. Then $s$ is a coisotropic deformation of $S$ if and only if $MC (-s) = 0$.
\end{corollary}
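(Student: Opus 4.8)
The plan is to deduce the statement from the characterization of coisotropic sections in Proposition \ref{prop:coiss}, namely that $s$ is coisotropic if and only if $P(\exp I(-s)_\ast J) = 0$, by identifying $P(\exp I(-s)_\ast J)$ with the Maurer-Cartan series $MC(-s)$ under the standing convergence hypothesis. Granting this identification, the equivalence asserted in the Corollary is immediate, since then $MC(-s) = 0 \iff P(\exp I(-s)_\ast J) = 0 \iff s$ is coisotropic.

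First I would collect the algebraic facts already at our disposal: $P(J) = 0$, which holds by Proposition \ref{prop:PJ=0} because $S$ is coisotropic, together with the identities $P(\Ll_{I(s)}^k J) = \frak m_k(-s,\ldots,-s)$ for $k \geq 1$ from (\ref{eq:mcl}). Hence, formally, $MC(-s) = \sum_{k\geq 1}\frac{1}{k!}P(\Ll_{I(s)}^k J) = \sum_{k\geq 0}\frac{1}{k!}P(\Ll_{I(s)}^k J)$, and the whole content of the argument is reduced to checking that, when this series converges, its sum equals $P(\exp I(-s)_\ast J)$.

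For this last point the key observation is that the one-parameter family $J_t := \exp I(-ts)_\ast J$, $t \in \R$, is the push-forward of $J$ along the (complete) flow generated by the derivation $I(s) = \mathbb D_{\pi^\ast s}$, and therefore, by the infinitesimal formula (\ref{Lie1}) (equivalently (\ref{Lie2})), solves the linear evolution equation $\frac{d}{dt}J_t = \Ll_{I(s)} J_t$ with $J_0 = J$. The exponential series $t \mapsto \sum_{k\geq 0}\frac{t^k}{k!}\Ll_{I(s)}^k J$ solves the same Cauchy problem formally, so on any interval on which it converges together with its iterated $\Ll_{I(s)}$-derivatives — which is precisely the situation encoded by the convergence hypothesis — it must coincide with $J_t$ by uniqueness. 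Evaluating at $t = 1$ gives $\exp I(-s)_\ast J = \sum_{k\geq 0}\frac{1}{k!}\Ll_{I(s)}^k J$ in $(\Der^\bullet L)[1]$, and applying the ($C^\infty$-continuous) projection $P$ term by term yields $P(\exp I(-s)_\ast J) = \sum_{k\geq 0}\frac{1}{k!}P(\Ll_{I(s)}^k J) = MC(-s)$, where the vanishing $P(J) = 0$ kills the $k=0$ term. Combined with Proposition \ref{prop:coiss} this is exactly the Corollary.

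The one genuinely delicate step is this identification of a convergent exponential series with the honest push-forward $\exp I(-s)_\ast J$ rather than with some merely formal object; in local fibered coordinates adapted to the tubular neighborhood it amounts to saying that the normal Taylor expansion along $S$ of the (smooth) coefficient functions of $J$, evaluated along the graph of $s$, reproduces those functions along $s(S)$. This is where the convergence assumption is essential and cannot be removed — as recorded just before the Corollary, $MC(-s)$ need not converge even for coisotropic $s$ — but once it is granted, the remaining bookkeeping is routine.
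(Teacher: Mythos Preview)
Your strategy is the natural one and matches what the paper has in mind: the Corollary is labeled ``obvious'' and given no proof, so the intended argument is precisely the reduction to Proposition~\ref{prop:coiss} via the identities $P(J)=0$ and (\ref{eq:mcl}). You also correctly isolate the one delicate step, namely passing from convergence of the series $\sum_{k}\tfrac{1}{k!}P(\Ll_{I(s)}^k J)$ to the identification of its sum with $P(\exp I(-s)_\ast J)$.

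The problem is that this step is not merely delicate --- it actually fails under the hypothesis as literally stated, so your final paragraph cannot be completed. The quantity $MC(-s)$ is the Taylor series at $t=0$ of the smooth map $t\mapsto P(\exp I(-ts)_\ast J)$, evaluated at $t=1$; convergence of a Taylor series does not force it to sum to the function value. Concretely, take $M=\R^3$ with coordinates $(x,y_1,y_2)$ and Poisson bivector $\Pi=f(y_1)\,\partial_{y_1}\wedge\partial_{y_2}$, where $f(y)=e^{-1/y^2}$ for $y\neq 0$ and $f(0)=0$. Then $S=\{y_1=y_2=0\}$ is coisotropic (since $f(0)=0$), and for the constant section $s=\partial_{y_1}$ formula (\ref{eq:MC_Taylor}) reduces to the Taylor series of $t\mapsto f(t)$ at $t=0$, which is identically zero --- hence $MC(-s)=0$ converges. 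But $s(S)=\{y_1=1,\,y_2=0\}$ is \emph{not} coisotropic, since $\Lambda_J^\#(dy_1)|_{s(S)}=f(1)\,\partial_{y_2}\neq 0$ is not tangent to $s(S)$. In other words, the multibrackets $\frak m_k$ only see the infinite jet of $J$ along $S$ (this is explicit in Corollary~\ref{prop:multi-brackets_coordinates}), so a flat-but-nonzero normal perturbation of $J$ is invisible to $MC(-s)$ while still affecting coisotropicity of $s(S)$.

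What the paper presumably intends --- and what makes the Corollary genuinely obvious --- is the stronger hypothesis that $MC(-s)$ converges \emph{to} $P(\exp I(-s)_\ast J)$; under that reading the statement is an immediate restatement of Proposition~\ref{prop:coiss}. This interpretation is consistent with the surrounding text: Proposition~\ref{prop:fana} and Corollary~\ref{cor:conver} are precisely about supplying a condition (fiber-wise entirety of $\Lambda_J$) guaranteeing convergence to the correct limit for all $s$. Your write-up would be fixed by making this reading explicit rather than trying to extract the identification from mere convergence.
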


\begin{corollary}\label{cor:inf1} A section $s$ of $NS$ is an
	infinitesimal coisotropic deformation of $S$ iff
	\begin{equation}
		\mathfrak{m}_1 (s) = 0.\label{eq:infdef}
	\end{equation}
\end{corollary}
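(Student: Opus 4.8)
The plan is to read off the statement as the first-order shadow of Proposition \ref{prop:mcformal}. By Definition \ref{def:smcoiso}, $s \in \Gamma(NS)$ is an infinitesimal coisotropic deformation of $S$ exactly when the formal deformation $\eps s \in \Gamma(NS)[[\eps]]$ (with vanishing constant term) is coisotropic modulo $O(\eps^2)$. By Proposition \ref{prop:mcformal}, $\eps s$ is coisotropic if and only if $MC(-\eps s) = 0$; and, since the proof of that proposition rests on the \emph{identities} (\ref{eq:mcl}) rather than on mere implications, the same argument truncated at order $\eps^2$ shows that $\eps s$ is coisotropic modulo $O(\eps^2)$ if and only if $MC(-\eps s) \equiv 0 \pmod{\eps^{2}}$.

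First I would expand the Maurer--Cartan series. Since each $\frak m_k$ is $\R$-multilinear,
\[
MC(-\eps s) = \sum_{k \ge 1} \frac{1}{k!}\, \frak m_k(-\eps s, \ldots, -\eps s) = \sum_{k \ge 1} \frac{(-\eps)^{k}}{k!}\, \frak m_k(s, \ldots, s) \equiv -\eps\, \frak m_1(s) \pmod{\eps^{2}},
\]
because the $k$-th summand is of order $\eps^{k}$. Hence $MC(-\eps s) \equiv 0 \pmod{\eps^{2}}$ if and only if $\frak m_1(s) = 0$, which is exactly (\ref{eq:infdef}).

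Alternatively, and bypassing Proposition \ref{prop:mcformal}, one can argue directly from Proposition \ref{prop:coiss}.(2): the section $\eps s$ is coisotropic modulo $O(\eps^2)$ iff $P(\exp \Ll_{I(\eps s)} J) \equiv 0 \pmod{\eps^2}$, and since $I$ is linear this exponential equals $J + \eps\, \Ll_{I(s)} J + O(\eps^2)$; applying $P$, the constant term $P(J)$ vanishes because $S$ is coisotropic (Proposition \ref{prop:PJ=0}), while $P(\Ll_{I(s)} J) = \frak m_1(-s) = -\frak m_1(s)$ by (\ref{eq:mcl}). Either way, the only point deserving a little care is the sign/formal-parameter bookkeeping --- in particular, that replacing $s$ by $\eps s$ rescales the $k$-th derived bracket by $(-\eps)^{k}$, so that only $\frak m_1$ survives modulo $\eps^{2}$ --- and there is no real obstacle to overcome.
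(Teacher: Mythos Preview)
Your proof is correct and follows exactly the approach implied by the paper, which states this result as an immediate corollary of Proposition~\ref{prop:mcformal}: truncate the Maurer--Cartan series at order~$\eps$ and observe that only $\frak m_1(s)$ survives. One small remark on the alternative argument: when you write ``from Proposition~\ref{prop:coiss}.(2)'' you are really invoking its formal analogue, Definition~\ref{def:coisoformal}, since $\eps s$ is a formal section rather than an actual one; the content is the same, but the citation is slightly off.
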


By Remark \ref{rem:deform}.(1), $\mathfrak{m}_1$ coincides with the Jacobi algebroid de Rham differential $d_{N_\ell {}^\ast S, \ell}$. Hence, a similar argument as in the proof of Theorem 11.2 in \cite{OP2005} yields

\begin{corollary}\label{prop:rigid} Assume that the second cohomology group $H^2 (N_\ell {}^\ast S, \ell)$ of the Jacobi subalgebroid $N_\ell {}^\ast S \subset J^1 L$ with values in $\ell$ is zero. Then every infinitesimal coisotropic deformation can be prolonged to a formal coisotropic deformation, i.e.~for any given class $\alpha \in H^1(N_\ell {}^\ast S, \ell)$ Equation (\ref{eq:MC}) has a solution $s ( \eps) = \sum _{i=1} ^ \infty \eps ^ i s_i$ such that $\mathfrak{m}_1 (s_1) = 0$ and $[s_1] = \alpha$. In other words, the formal deformation problem is unobstructed.
\end{corollary}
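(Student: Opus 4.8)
The plan is the standard obstruction-theoretic recursion for Maurer--Cartan elements of an $L_\infty$-algebra, carried out in the $\eps$-adically complete setting where every power series converges order by order. Write the deformation to be produced as $s(\eps) = \sum_{i\ge 1}\eps^i s_i$ with each $s_i \in \Gamma (NS)$. Since $-s(\eps)$ has no $\eps^0$-term, all the infinite sums $\frak m_k(-s(\eps),\dots,-s(\eps))$ (and the ``Bianchi sums'' used below) make sense in $\Gamma (\wedge^\bullet N_\ell S \otimes \ell)[[\eps]]$, only finitely many $\frak m_k$ contributing to any fixed power of $\eps$. Expanding the left-hand side of (\ref{eq:MC}), the coefficient of $\eps^n$ in $MC(-s(\eps))$ has the shape $-\frak m_1 (s_n) + Q_n$, where $Q_n = Q_n(s_1,\dots,s_{n-1})$ gathers the contributions of the $\frak m_k$ with $k\ge 2$ and depends only on $s_1,\dots,s_{n-1}$, because in $\frak m_k(-s_{i_1},\dots,-s_{i_k})$ with $i_1+\cdots+i_k=n$ and $k\ge 2$ each $i_j\le n-1$. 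By Remark \ref{rem:deform}.(1), $\frak m_1 = d_{N_\ell {}^\ast S, \ell}$, so the order-$1$ equation reads $\frak m_1(s_1)=0$, i.e.\ $s_1$ is an infinitesimal coisotropic deformation (Corollary \ref{cor:inf1}); for the base of the recursion I would simply choose a representative $s_1$ of the prescribed class $\alpha \in H^1(N_\ell {}^\ast S, \ell)$.

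For the inductive step, assume $s_1,\dots,s_{n-1}$ have been constructed so that the truncation $s^{(n-1)}(\eps) := \sum_{i=1}^{n-1}\eps^i s_i$ satisfies $MC(-s^{(n-1)}(\eps)) \equiv 0 \pmod{\eps^n}$, hence $MC(-s^{(n-1)}(\eps)) \equiv \eps^n Q_n \pmod{\eps^{n+1}}$. The crucial point is that $Q_n \in \Gamma (\wedge^2 N_\ell S \otimes \ell)$ is $\frak m_1$-closed, and this is exactly where the $L_\infty$-relations enter: they imply the ``Bianchi'' identity $\sum_{k\ge 0}\tfrac{1}{k!}\,\frak m_{k+1}\big(MC(x),x,\dots,x\big)=0$, which I would apply to $x=-s^{(n-1)}(\eps)$. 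Since $MC(-s^{(n-1)}(\eps))$ is divisible by $\eps^n$ and each remaining argument $x$ is divisible by $\eps$, every summand with $k\ge 1$ is divisible by $\eps^{n+1}$; thus $\frak m_1\big(MC(-s^{(n-1)}(\eps))\big)\equiv 0\pmod{\eps^{n+1}}$, and comparing the coefficients of $\eps^n$ gives $\frak m_1(Q_n)=0$. Now the hypothesis $H^2(N_\ell {}^\ast S, \ell)=0$ --- which is the vanishing of the degree-$2$ cohomology of $\big(\Gamma (\wedge^\bullet N_\ell S \otimes \ell),\frak m_1\big)$, since $\frak m_1 = d_{N_\ell{}^\ast S,\ell}$ --- provides $s_n \in \Gamma (NS) = \Gamma (\wedge^1 N_\ell S \otimes \ell)$ with $\frak m_1(s_n)=Q_n$. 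With this choice the coefficient of $\eps^n$ in $MC(-s(\eps))$ vanishes, so $MC(-s^{(n)}(\eps)) \equiv 0 \pmod{\eps^{n+1}}$ for $s^{(n)}(\eps):=s^{(n-1)}(\eps)+\eps^n s_n$, and the recursion proceeds.

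Passing to the limit yields a formal deformation $s(\eps)=\sum_{i\ge 1}\eps^i s_i$ with $MC(-s(\eps))=0$, $\frak m_1(s_1)=0$ and $[s_1]=\alpha$; by Proposition \ref{prop:mcformal} it is coisotropic, hence it is a prolongation of the given infinitesimal coisotropic deformation. As $\alpha\in H^1(N_\ell{}^\ast S,\ell)$ was arbitrary, the formal deformation problem of $S$ is unobstructed.

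I expect the only delicate point to be the $\eps$-order bookkeeping in the Bianchi-type identity forcing $\frak m_1(Q_n)=0$; everything else is the familiar Kuranishi/obstruction argument (as in the proof of \cite[Theorem 11.2]{OP2005}). It is worth recording once that, because the $\frak m_k$ are graded symmetric and all the $s_i$ lie in the \emph{same} (even) degree of $\Gamma (\wedge^\bullet N_\ell S \otimes \ell)[1]$, the multilinear expansions involve only ordinary multinomial coefficients, so no sign subtleties arise in the recursion.
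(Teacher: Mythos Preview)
Your argument is correct and is precisely the standard obstruction-theoretic recursion that the paper invokes by citing \cite[Theorem 11.2]{OP2005}; in particular, your use of the Bianchi-type identity $\sum_{k\ge 0}\tfrac{1}{k!}\,\frak m_{k+1}(MC(x),x,\dots,x)=0$ to force $\frak m_1(Q_n)=0$ is exactly the mechanism underlying that reference. The paper gives no independent proof beyond this citation, so your write-up is in fact a fuller account of the same approach.
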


There is also a simple criterion for non-prolongability of an infinitesimal coisotropic deformation to a formal coisotropic deformation based on the \emph{Kuranishi map}:
\[
Kr : H^1 (N_\ell {}^\ast S, \ell) \longrightarrow H^2 (N_\ell {}^\ast S, \ell), \quad [s] \longmapsto [\mathfrak m_2 (s, s)].
\]
Since $\mathfrak{m}_1$ is a derivation of the binary bracket $\mathfrak{m}_2$, the Kuranishi map is well-defined. Moreover, similarly as in \cite{OP2005} (Theorem 11.4) we have the following

\begin{proposition}
	\label{prop:Kuranishi}
	Let $\alpha = [s] \in H^1 (N_\ell {}^\ast S, \ell)$, where $s \in \Gamma (NS)$ is an in\-fin\-i\-tes\-i\-mal coisotropic deformation, i.e.~$d_{N_\ell {}^\ast S, \ell} s = \mathfrak{m}_1 s = 0$. If $Kr (\alpha) \neq 0$, then $s$ cannot be prolonged to a formal coisotropic deformation. In particular, the formal deformation problem is obstructed.
\end{proposition}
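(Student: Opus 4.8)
The plan is to argue by contradiction, extracting the obstruction from the order-$\eps^2$ component of the Maurer--Cartan equation. Suppose that $s$ could be prolonged, i.e.\ that there exists a formal coisotropic deformation $s(\eps) = \sum_{i\ge 1}\eps^i s_i$ of $S$ with $s_1 = s$. By Proposition~\ref{prop:mcformal}, this is equivalent to $MC(-s(\eps)) = 0$ in $\Gamma (\wedge^\bullet N_\ell {}^\ast S \otimes \ell)[[\eps]]$. First I would record the elementary but essential remark that, since $s(\eps)$ has vanishing constant term, each summand $\frak m_k(-s(\eps),\ldots,-s(\eps))$ contributes only to orders $\eps^{\ge k}$; hence the coefficient of every fixed power of $\eps$ in $MC(-s(\eps))$ is a \emph{finite} sum, and the equation $MC(-s(\eps))=0$ may be solved order by order.

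Next I would expand $MC(-s(\eps)) = 0$ in powers of $\eps$. The coefficient of $\eps^1$ reproduces $\frak m_1(s) = 0$, which is exactly the hypothesis on $s$. The coefficient of $\eps^2$ receives precisely two contributions: the $k=1$ term, giving $-\frak m_1(s_2)$, and the $k=2$ term, giving $\tfrac12 \frak m_2(s,s)$ (here one uses the graded symmetry of $\frak m_2$ and the sign conventions of Section~\ref{sec:coiso}; the two factors $-s(\eps)$ produce an overall $+$). Thus the order-$\eps^2$ equation reads $\frak m_2(s,s) = 2\,\frak m_1(s_2)$, exhibiting $\frak m_2(s,s)$ as an $\frak m_1$-coboundary.

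Finally, since $\frak m_1 = d_{N_\ell {}^\ast S,\ell}$ and $\frak m_1 \frak m_2(s,s) = 0$ (because $s$ is $\frak m_1$-closed and $\frak m_1$ is a derivation of $\frak m_2$ — precisely what makes the Kuranishi map well defined), the identity above gives $Kr(\alpha) = [\frak m_2(s,s)] = 0$ in $H^2(N_\ell {}^\ast S,\ell)$, contradicting the assumption $Kr(\alpha)\ne 0$. Hence no such prolongation exists, and in particular the formal deformation problem is obstructed. The argument is essentially bookkeeping within the formal Maurer--Cartan series; the only points demanding any care are the combinatorial factor and the sign in the $\eps^2$-coefficient, together with the preliminary observation that $MC(-s(\eps))$ is a bona fide element of $\Gamma (\wedge^\bullet N_\ell {}^\ast S \otimes \ell)[[\eps]]$ thanks to $s_0 = 0$. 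No genuine obstacle arises beyond these.
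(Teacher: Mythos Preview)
Your argument is correct and is precisely the standard obstruction-theoretic computation the paper has in mind: the paper does not write out a proof but simply refers to \cite[Theorem 11.4]{OP2005}, where the same extraction of the $\eps^2$-coefficient from the Maurer--Cartan equation is performed. Your bookkeeping of signs and the combinatorial factor is accurate, and the preliminary remark that $MC(-s(\eps))$ is a well-defined formal series because $s_0=0$ is exactly the point that makes the order-by-order analysis legitimate.
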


\subsection{Formal deformations and smooth deformations}\label{sec:radially_entire} In this subsection we establish a connection between formal coisotropic deformations and smooth coisotropic deformations. We do this introducing the notion of fiber-wise entire bi-symbol, which is a slight generalization of the notion of fiber-wise entire Poisson structure introduced by Sch\"atz and Zambon in \cite{SZ2012}, and is motivated by the Taylor expansion of the bi-linear form
$P (\exp I (-s)_* \Lambda_J)$ (Proposition \ref{prop:fana}).

Let $E \to S$ be a vector bundle. Recall that a smooth function on $E$ is called \emph{fiber-wise entire} if its restriction to each fiber of $E$ is \emph{entire}, i.e.~it is real analytic on the whole fiber. Now, let $\ell \to S$ be a line bundle, and $L := E \times_S \ell$. A section of $L$ is called \emph{fiber-wise entire} if it is a linear combination of fiber-wise constant sections, with coefficients being fiber-wise entire functions.
Let $\Theta \in \Gamma (\wedge^k (TE \otimes L^\ast)\otimes L)$. We regard $\Theta$ as a multi-linear map
\[
\Theta : \wedge^{k} (T^\ast E \otimes L) \longrightarrow L.
\]
The multi-linear map $\Theta$ is called {\it fiber-wise entire} if
\[
\Theta (df_1 \otimes \lambda_1 , \dots, df_{k} \otimes \lambda_{k})
\]
is fiber-wise entire, whenever $f_1, \dots, f_{k}$ are fiber-wise linear and $\lambda_1, \dots, \lambda_{k}$ are fiber-wise constant. Equivalently $\Theta$ is fiber-wise entire if its components in some (and therefore any) system of vector bundle coordinates are fiber-wise entire functions (cf.~\cite[Lemmas 1.4, 1.7]{SZ2012}).

Now, let $S$ and $(NS, L, J \equiv \{-,-\})$ be as in Subsection \ref{sec:cois_def}. The following proposition generalizes the main result of \cite{SZ2012} establishing a necessary and sufficient condition for the convergence of the Maurer-Cartan series $MC(-s)$
of a generic section $s \in \Gamma(NS)$.

\begin{proposition}\label{prop:fana} The bi-symbol $\Lambda_J$ of the Jacobi bi-differential operator $J$ is fiber-wise entire iff, for all sections $s \in \Gamma (NS)$, the Maurer-Cartan series $MC (-s)$ converges to $P ( \exp I(s)_\ast J) = P (\exp I(s)_\ast \Lambda_J )$ in the sense of point-wise convergence.
\end{proposition}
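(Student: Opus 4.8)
The plan is to reduce the biconditional to a single formal identity and then to a fiber-wise real-analyticity statement that can be read off in vector bundle coordinates. First I would record, as an equality of formal power series (rescale $s$ to $ts$ and expand in $t$), the chain $MC(-s) = P(\exp \Ll_{I(s)} J) = P(\exp \Ll_{I(s)} \Lambda_J)$: the first equality follows from (\ref{eq:mcl}) together with $P(J)=0$ (Proposition \ref{prop:PJ=0}) and the definition of $\exp \Ll_{I(s)}$, and the second is (\ref{eq:PJvsPLambda}). The crucial observation is then that $I(s) = \D_{\pi^\ast s}$ is the infinitesimal generator of the flow of \emph{fiber translations} $\Phi_t : (\nu,\lambda)\mapsto (\nu+ts(x),\lambda)$ on $L_{NS}$ (so that $\exp I(s)=\Phi_1$), whose differential is unipotent (lower triangular with identity diagonal blocks); consequently, exactly as in the one-variable identity $f(y+t)=\sum_k t^k f^{(k)}(y)/k!$, the formal series $\exp \Ll_{I(s)}\Lambda_J$ is term by term the fiber-wise Taylor expansion at the zero section of the honest smooth tensor $\exp I(s)_\ast \Lambda_J$. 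Applying the (continuous) projection $P$, one concludes that $MC(-s)$ is precisely the fiber-wise Taylor expansion of the smooth section $P(\exp I(s)_\ast J) = P(\exp I(s)_\ast \Lambda_J)$, so that the proposition amounts to: this Taylor expansion converges to the section itself, for every smooth $s$, if and only if $\Lambda_J$ is fiber-wise entire.

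For the ``if'' direction I would fix fibered coordinates $z^\alpha = (x^i,y^a)$ with local generator $\mu$ fiber-wise constant, so that $\Lambda_J$ fiber-wise entire means exactly that each component $J^{\alpha\beta}(x,-)$ of $\Lambda_J$ (the $J^{\alpha\beta}$ of (\ref{eq:components_of_J})) is entire in $y$. A direct computation of the push-forward along the unipotent map $\exp I(s)$ followed by restriction to $S$ shows that $P(\exp I(s)_\ast \Lambda_J)$ is a \emph{polynomial} in the first derivatives $\partial_i s^a$ whose coefficients are the functions $J^{ab}$, $J^{ib}$, $J^{ij}$ evaluated along the section; specialised to $L=\R_M$ this is essentially the computation of Sch\"atz and Zambon \cite{SZ2012}. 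Since only finitely many powers of $\partial_i s^a$ occur, the sole possible source of divergence is the fiber-wise Taylor expansion of these coefficient functions; being restrictions of fiber-wise entire functions, their Taylor series along each fiber converge everywhere, and hence $MC(-s)$ converges to $P(\exp I(s)_\ast J)$ for every $s$.

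For the ``only if'' direction, assume $MC(-s)$ converges for every smooth $s$ (I use only the convergence, not the value). Fix $x_0\in S$ and coordinates as above. Taking $s$ equal, near $x_0$, to the constant section $tv$ (extended arbitrarily outside a neighbourhood), the vanishing of $\partial s$ at $x_0$ together with Corollary \ref{prop:multi-brackets_coordinates} identifies $MC(-s)(x_0)$ with the Taylor series of $t\mapsto J^{ab}(x_0,tv)$ times $\delta_a\wedge\delta_b\otimes\mu$; convergence for all $t\in\R$ and all $v$ forces $J^{ab}(x_0,-)$ to be entire. Taking instead $s$ with a prescribed $1$-jet at $x_0$, say $s^a(x)=tv^a + tw^a_i(x^i-x_0^i)$ near $x_0$, the extra terms — equivalently, the $w$-dependent part of the polynomial-in-$\partial s$ expression of the previous paragraph — bring in the coefficient functions $J^{ib}$ and $J^{ij}$; separating powers of $w$ and of $t$ shows that their fiber-wise Taylor series at $(x_0,0)$ must converge for all $t$ as well, so $J^{ib}(x_0,-)$ and $J^{ij}(x_0,-)$ are entire. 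As $x_0$ is arbitrary, every component of $\Lambda_J$ is fiber-wise entire, which is what we wanted.

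The step I expect to require the most care is the first one: one must justify rigorously that $\exp \Ll_{I(s)}\Lambda_J$ really is, term by term, the fiber-wise Taylor expansion of the smooth tensor $\exp I(s)_\ast \Lambda_J$ — with the correct (polynomial, hence innocuous) treatment of the $\partial_i s^a$ contributions — so that ``convergence of the Maurer--Cartan series'' translates precisely into real-analyticity of the components of $\Lambda_J$ along the fibers, with radius of convergence large enough to reach $s(x)$ for arbitrarily large $s$, i.e.\ into fiber-wise entireness. The combinatorial bookkeeping in the ``only if'' part, needed to disentangle the $\partial s$-polynomial and isolate each $J^{\alpha\beta}$, is routine once the explicit push-forward formula from the second step is in hand.
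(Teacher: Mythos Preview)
Your proposal is correct and follows essentially the same approach as the paper's proof: both compute the push-forward $(\Phi_{-t})_\ast \Lambda_J$ explicitly in fibered coordinates, obtain the key formula expressing $MC(-s)$ as the Taylor series in $t$ of a polynomial in $\partial_i s^a$ with coefficients $J^{ab}\circ ts$, $J^{ai}\circ ts$, $J^{ij}\circ ts$, and then argue in the two directions exactly as you do. The paper's test sections for the ``only if'' part are locally constant $s$ (to isolate $J^{ab}$) and then $s$ with $s^a_i=\delta_i^{i_0}\delta^a_{a_0}$ (to isolate $J^{ai}$, then $J^{ij}$), which is your prescribed $1$-jet argument with a specific convenient choice of $w$; your framing via the fiber-wise Taylor interpretation is slightly more conceptual but the content is identical.
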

\begin{proof} Let $(z^\alpha) = (x^i, y^a)$ be vector bundle coordinates on $NS$, with $x^i$ coordinates on $S$, and $y^a$ linear coordinates along the fibers of $NS$. Moreover, let $\mu$ be a fiber-wise constant local generator of $\Gamma (L)$. The Jacobi bi-differential operator $J$ is locally given by Equation (\ref{eq:components_of_J}), or, equivalently, Equation~(\ref{eq:components_of_J_bis}):
	\[
	J  = \left( J^{a b}\nabla_a\wedge\nabla_b +2J^{ai}\nabla_a\wedge\nabla_i +J^{ij}\nabla_i\wedge\nabla_j \right) \otimes\mu+ \left( J^a\nabla_a +J^i\nabla_i \right) \wedge\operatorname{id}.
	\]
	Accordingly, the bi-symbol $\Lambda_J$ is locally given by
	\[
	\begin{aligned}
	\Lambda_J  = \left( J^{a b}\delta_a\wedge\delta_b +2J^{ai}\delta_a\wedge\delta_i +J^{ij}\delta_i\wedge\delta_j \right) \otimes\mu
	\end{aligned}
	\]
	where $\delta_\alpha := \partial_\alpha \otimes \mu^\ast$. In particular, $\Lambda_J$ is fiber-wise entire if and only if its components $J^{ab}, J^{ai}, J^{ij}$ are fiber-wise entire functions. Now, let $s\in \Gamma (NS)$ and denote by $\{ \Phi_t \}$ the one parameter group of automorphisms of $L$ generated by $I(s)$. Then, from $P(J) = P (\Lambda_J) = 0$, Equations (\ref{eq:mcl}), (\ref{eq:PJvsPLambda}), and the very definition of the Lie derivative, we get
	\begin{equation*}
		MC (-s) = P \sum_{k=0}^\infty \left. \frac{\partial^k (\Phi_{-t_1 - \cdots - t_k})_\ast \Lambda_J}{\partial t_1 \cdots \partial t_k}\right|_{t_1 = \cdots = t_k = 0}=P \sum_{k=0}^\infty \frac{1}{k!} \left. \frac{d^k}{d t^k} \right|_{t=0}(\Phi_{-t})_\ast \Lambda_J.
	\end{equation*}
	Let $(x,y,\lambda) \in L$, $x \in S$, $y \in N_x S$, $\lambda \in L_x $. Then
	$$
	\Phi_{-t} (x,y, \lambda) = (x, y -t s(x) , \lambda)
	$$
	and
	\[
	\begin{aligned}
	(\Phi_{-t})_\ast \Lambda_J & = (J^{a b} \circ \Phi_t)\delta_a\wedge\delta_b\otimes\mu +2(J^{ai} \circ \Phi_t) \delta_a\wedge(\delta_i - t s_{i}^b \delta_b )\otimes\mu\\
	&\quad +(J^{ij}\circ \Phi_t )(\delta_i - t s_{i}^a \delta_a )\wedge (\delta_j - t s_{j}^b \delta_b)\otimes\mu ,
	\end{aligned}
	\]
	where $s^a_i$ denotes the partial derivative with respect to~$x^i$ of the $a$-th local component of $s$ in the local basis $(\partial_a)$ of $\Gamma (NS)$. Hence
	\begin{equation}
	\label{eq:MC_Taylor}
		MC(-s)\! =\! \sum_{k=0}^\infty\!\frac{1}{k!}\!\left. \frac{d^k}{dt^k}\right|_{t=0}\!\!\left( J^{ab}\! \circ\! ts - 2t s_i^b (J^{ai}\!\circ\! ts ) + t^2 s_i^a s_j^b (J^{ij}\! \circ\! ts) \right) \delta_a \!\wedge\! \delta_b\! \otimes\! \mu .
	\end{equation}
	Assume that $\Lambda_J$ is fiber-wise entire. Then the Taylor expansions in $t$, around $t = 0$, of $J^{ab} \circ ts$, $J^{ai} \circ ts$, and $J^{ij} \circ ts$ converge for all $t$'s, in particular for $t = 1$. It immediately follows that the series in the right hand side of~(\ref{eq:MC_Taylor}) converges as well. This proves the ``only if'' part of the proposition (cf.~the proof of the analogous proposition in \cite{SZ2012}).
	
	For the ``if part'' of the proposition assume that the series in the right hand side of~(\ref{eq:MC_Taylor}) converges for all $s$. First of all, locally, we can choose $s$ to be ``constant'' with respect to~coordinates $(x^i, y^a)$. Then $s^a_i = 0$ and (\ref{eq:MC_Taylor}) reduces to
	\begin{equation}\label{Taylor}
		MC(-s) = \sum_{k=0}^\infty \frac{1}{k!}\left. \frac{d^k}{dt^k}\right |_{t=0}\left( J^{ab} \circ ts \right) \delta_a \wedge \delta_b \otimes \mu .
	\end{equation}
	Since $s$ is arbitrary, (\ref{Taylor}) shows that the $J^{ab}$'s are entire on any straight line through the origin in the fibers of $NS$. Since the Taylor series of the restriction to such a straight line is the same as the restriction of the Taylor series, we conclude that the $J^{ab}$'s are fiber-wise entire. Now, fix values $i_0, a_0$ for the indexes $i,a$ respectively, and choose $s$ so that $s^a_i = \delta_i^{i_0}\delta_{a_0}^a$ to see that the $J^{a_0 i_0}$'s are fiber-wise entire for all $a_0, i_0$. One can prove that the $J^{ij}$'s are fiber-wise entire in a similar way. This concludes the proof.
\end{proof}

\begin{corollary}\label{cor:conver}
	Let $(M, L, J \equiv \{-,-\})$ be a Jacobi manifold, and let $S \subset M$ be a coisotropic submanifold equipped with a fat tubular neighborhood $\tau : L_{NS} \INTO L$. If $\tau_\ast ^{-1} \Lambda_J $ is fiber-wise entire, then a section $s : S \to NS$ of $NS$ is coisotropic if and only if the Maurer-Cartan series $MC (-s)$ converges to zero.
\end{corollary}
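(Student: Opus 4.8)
The plan is to obtain Corollary \ref{cor:conver} as an essentially immediate consequence of Proposition \ref{prop:fana} and Proposition \ref{prop:coiss}; the only real task is to line up the identifications. First I would note that, as throughout this subsection, we identify $L_{NS}$ with its image under $\tau$, and under this identification $\tau_\ast^{-1}\Lambda_J$ is nothing but the bi-symbol of the Jacobi bracket $J$ on $NS$ in the sense of Subsection \ref{sec:radially_entire}. Thus the hypothesis of the corollary says precisely that this bi-symbol is fiber-wise entire, which is exactly the standing assumption needed to apply Proposition \ref{prop:fana}.

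Next I would invoke the ``only if'' half of Proposition \ref{prop:fana}: since the bi-symbol is fiber-wise entire, for \emph{every} smooth section $s\in\Gamma(NS)$ the Maurer--Cartan series $MC(-s)$ converges, and its sum is the bi-linear form $P\big(\exp I(-s)_\ast J\big)=P\big(\exp I(-s)_\ast\Lambda_J\big)$ coming out of the Taylor-expansion computation in the proof of that proposition (here one simply observes that, $I$ being linear, $\exp I(-s)$ is the time-$1$ flow of $I(-s)=-I(s)$, i.e.\ exactly the automorphism of $L$ appearing in Proposition \ref{prop:coiss} and Remark \ref{rem:cois_Lambda}). Finally, Proposition \ref{prop:coiss} — the equivalence (1) $\Leftrightarrow$ (2) there, together with Remark \ref{rem:cois_Lambda} — states that $s$ is coisotropic if and only if this same quantity $P\big(\exp I(-s)_\ast J\big)$ vanishes. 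Chaining the two equivalences yields: $s$ is coisotropic $\iff P\big(\exp I(-s)_\ast J\big)=0 \iff MC(-s)=0$, which is the claim (note that under the hypothesis $MC(-s)$ automatically converges, so ``converges to zero'' is the correct reading of the conclusion).

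I do not expect any genuine obstacle here: the statement is formal once Propositions \ref{prop:fana} and \ref{prop:coiss} are in hand. The one point that needs care — and which I flagged above — is the bookkeeping: verifying that the tubular-neighborhood identification makes ``$\tau_\ast^{-1}\Lambda_J$ fiber-wise entire'' the correct hypothesis to feed into Proposition \ref{prop:fana}, and that the push-forward operator appearing in the convergence conclusion of Proposition \ref{prop:fana} is literally the one used to detect coisotropicity in Proposition \ref{prop:coiss} and Remark \ref{rem:cois_Lambda}.
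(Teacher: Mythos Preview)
Your proposal is correct and matches the paper's own treatment: the corollary is stated without a separate proof, as it is meant to be an immediate consequence of Proposition~\ref{prop:fana} combined with the characterization of coisotropic sections in Proposition~\ref{prop:coiss} (and Remark~\ref{rem:cois_Lambda}), exactly as you have spelled out. Your bookkeeping remarks about the identification via the fat tubular neighborhood and about the push-forward operator are precisely the points one needs to check, and they go through as you indicate.
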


\subsection{Moduli of coisotropic sections}
Jacobi diffeomorphisms, in particular Hamiltonian diffeomorphisms, preserve coisotropic submanifolds. Two coisotropic submanifolds are \emph{Hamiltonian equivalent} if there is an Hamiltonian isotopy (i.e.~a one parameter family of Hamiltonian diffeomorphisms) interpolating them. With this definition at hand one can define a moduli space of coisotropic submanifolds under \emph{Hamiltonian equivalence}. Now, let $S$ be a coisotropic submanifold. In this section we adapt the definition of Hamiltonian equivalence to the case of coisotropic sections of $NS \to S$ \cite[Definition 6.3]{LO2012}. In this way we define a local version of the moduli space under Hamiltonian equivalence.

\begin{definition}\label{def:hequi} (cf. \cite[Definition 10.2]{LO2012}).\
	\begin{enumerate}
		\item Two coisotropic sections $s_0, s_1 \in \Gamma (NS)$ are called \emph{Hamiltonian equivalent} if they are interpolated by a smooth family of sections $s_t \in \Gamma (NS)$ and there exists a family of Hamiltonian diffeomorphisms $\psi_t : NS \to NS$ of $(NS, L, J \equiv \{-,-\})$ (i.e.~the family $\{\psi_t\}$ is generated by a family $\{ X_{\lambda_t}\}$ of Hamiltonian vector fields, where the $\lambda_t$'s depend smoothly on $t$) and a family of diffeomorphisms $g_t : S \to S$, $t \in [0,1]$, such that $g_0 = \operatorname{id}_{S}$, $\psi_0 = \operatorname{id}_{NS}$ and
		$s_t = \psi_t \circ s_0 \circ g_t^{-1}$. A coisotropic deformation of $S$ is \emph{trivial} if it is Hamiltonian equivalent to the zero section.
		\item Two infinitesimal coisotropic deformations $s_0, s_1 \in \Gamma (NS)$ are called \emph{infinitesimally Hamiltonian equivalent} if $s_1 - s_0$ is the vertical component along $S$ of an Hamiltonian vector field. An infinitesimal coisotropic deformation is trivial if it is infinitesimally Hamiltonian equivalent to the zero section.
	\end{enumerate}
\end{definition}

Note that both Hamiltonian equivalence and infinitesimal Hamiltonian equivalence are equivalence relations. The notion of infinitesimal Hamiltonian equivalence is motivated by the following remark.

\begin{remark}
	Let $s_0,s_1$ be Hamiltonian equivalent coisotropic sections, and let $s_t$ be the family of sections interpolating them as in Definition \ref{def:hequi}.(1). Then $s_t$ is obviously a coisotropic section for all $t$. Moreover, $s_0$ and
	\[
	s_0 + \left. \frac{d}{dt}\right|_{t= 0} s_t
	\]
	are infinitesimally Hamiltonian equivalent coisotropic sections.
\end{remark}

\begin{proposition}\label{prop:Hameq}
	Let $s_0,s_1 \in \Gamma (NS)$ be Hamiltonian equivalent co\-iso\-tro\-pic sections. Then $s_0,s_1$ are interpolated by a smooth family of sections $s_t \in \Gamma (NS)$ and there exists a smooth family of sections $\lambda_t$ of the Jacobi bundle $L$ such that $s_t$ is a solution of the following evolutionary equation:
	\begin{equation}\label{eq:Hameq}
		\frac{d}{dt} s_t = P (\exp I(-s_t)_\ast \Delta_{\lambda_t}).
	\end{equation}
	If $S$ is compact, the converse is also true.
\end{proposition}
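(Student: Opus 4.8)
The plan is to prove both implications by matching the geometric data of Definition~\ref{def:hequi}.(1) with the evolution equation \eqref{eq:Hameq} through the identity $P^s = P\circ \exp I(-s)_\ast$ of \eqref{eq:Ps}. The key preliminary observation, which should be made first, is a concrete reading of the right hand side of \eqref{eq:Hameq}: for a section $s$ of $\pi\colon NS\to S$ and a derivation $\Delta$ of $L$ with symbol $X$, the section $P(\exp I(-s)_\ast\Delta)\in\Gamma(NS)$ is exactly the vertical component of $X|_{s(S)}$ with respect to the splitting $T(NS)|_{s(S)} = T(s(S))\oplus V(NS)|_{s(S)}$, transported to $NS$ under the canonical identification $V(NS)|_{s(S)}\cong NS$ (which uses that $\pi|_{s(S)}$ is a diffeomorphism and that $s(S)$ is a graph, so $V(NS)\cap T(s(S))=0$). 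This is immediate once one unravels that $\exp I(-s)$ carries $s(S)$ onto the zero section $S$ and acts as the identity on fibres, so that its differential carries $T(s(S))$ to $TS$ and is the identity $V_{s(x)}(NS)=N_xS=V_x(NS)$ on vertical parts; hence applying $\sigma$, restricting to $S$ and projecting to $NS$ picks out precisely that vertical component.

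\emph{The ``only if'' direction.} Assume $s_0$ and $s_1$ are Hamiltonian equivalent, with interpolating data $\{s_t\}$, $\{\psi_t\}$, $\{g_t\}$, $\{\lambda_t\}$ as in Definition~\ref{def:hequi}.(1), so that $s_t\circ g_t = \psi_t\circ s_0$ and $\psi_t$ is the flow of $X_{\lambda_t}$. Differentiating $s_t(g_t(x)) = \psi_t(s_0(x))$ in $t$ and using $\frac{d}{dt}\psi_t = X_{\lambda_t}\circ\psi_t$ yields
\[
(\partial_t s_t)(g_t(x)) + d(s_t)_{g_t(x)}\big(\dot g_t(x)\big) = X_{\lambda_t}\big(s_t(g_t(x))\big).
\]
Writing $y=g_t(x)$, the term $(\partial_t s_t)(y)$ is vertical (since $\pi\circ s_t=\mathrm{id}_S$ for all $t$) while $d(s_t)_y(\dot g_t(g_t^{-1}(y)))$ is tangent to $s_t(S)$; comparing components in $T(NS)|_{s_t(S)}=T(s_t(S))\oplus V(NS)|_{s_t(S)}$ gives that $(\partial_t s_t)(y)$ is the vertical component of $X_{\lambda_t}(s_t(y))$, which by the preliminary observation is $P(\exp I(-s_t)_\ast\Delta_{\lambda_t})(y)$. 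This is \eqref{eq:Hameq}.

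\emph{The ``if'' direction.} Conversely, let $\{s_t\}$ interpolate $s_0$ and $s_1$ and solve \eqref{eq:Hameq} for a smooth family $\lambda_t\in\Gamma(L)$. Since $S$ is closed, after multiplying $\lambda_t$ by a time-independent cut-off function equal to $1$ on a neighbourhood of $\bigcup_t s_t(S)$ — which changes neither $X_{\lambda_t}$ near $s_t(S)$ nor the right hand side of \eqref{eq:Hameq} — we may assume $X_{\lambda_t}$ is complete; let $\psi_t\colon NS\to NS$ be its flow, a family of Hamiltonian diffeomorphisms with $\psi_0=\mathrm{id}$. Consider $\tilde s_t:=\psi_t^{-1}\circ s_t$. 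Using $\frac{d}{dt}\psi_t^{-1}(q) = -d(\psi_t^{-1})_q(X_{\lambda_t}(q))$ and \eqref{eq:Hameq} rewritten as ``$\partial_t s_t - X_{\lambda_t}\circ s_t$ is tangent to $s_t(S)$'', one computes $\partial_t\tilde s_t(x) = d(\psi_t^{-1})_{s_t(x)}\big(\partial_t s_t(x) - X_{\lambda_t}(s_t(x))\big)\in T_{\tilde s_t(x)}(\tilde s_t(S))$ for all $x,t$. Now I invoke the elementary fact that a smooth family of embeddings whose time-derivative is everywhere tangent to its own image has constant image: writing $\partial_t\tilde s_t = d\tilde s_t(a_t)$ for the unique time-dependent vector field $a_t$ on $S$ and letting $u_t$ be the flow of $-a_t$ (which exists since $S$ is closed), one gets $\frac{d}{dt}(\tilde s_t\circ u_t)=0$, hence $\tilde s_t\circ u_t=\tilde s_0$ and $\tilde s_t(S)=\tilde s_0(S)=s_0(S)$. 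Therefore $s_t(S)=\psi_t(s_0(S))$; in particular $s_t(S)$ is a graph over $S$, so $g_t:=\pi\circ\psi_t\circ s_0\colon S\to S$ is a diffeomorphism with $g_0=\mathrm{id}_S$, and $\psi_t\circ s_0\circ g_t^{-1}$ is a section with image $\psi_t(s_0(S))=s_t(S)$, hence equals $s_t$. This is precisely the Hamiltonian equivalence of $s_0$ and $s_1$ in the sense of Definition~\ref{def:hequi}.(1) (and, en passant, shows every $s_t$ is coisotropic, being the image of $s_0(S)$ under a Jacobi diffeomorphism).

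The only step with genuine content is the ``if'' direction: the translation of the evolutionary equation \eqref{eq:Hameq} into the statement that the submanifolds $s_t(S)$ are carried by the Hamiltonian flow of $X_{\lambda_t}$, which hinges on the ``tangential time-derivative $\Rightarrow$ constant image'' lemma and on keeping the various (mutually compatible) identifications of normal and vertical bundles along $s_t(S)$ straight when verifying \eqref{eq:Hameq}. The completeness of $X_{\lambda_t}$ is a routine cut-off argument that does not affect the Hamiltonian nature of the isotopy.
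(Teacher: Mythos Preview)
Your proof is correct. The ``only if'' direction is essentially identical to the paper's: both differentiate the relation $s_t = \psi_t\circ s_0\circ g_t^{-1}$ and read off \eqref{eq:Hameq} by separating tangential and vertical parts along $s_t(S)$ (the paper phrases the computation via pullbacks on functions, you via tangent vectors, but this is cosmetic).

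For the ``if'' direction you take a somewhat different, more explicit route. The paper argues by uniqueness of solutions: once \eqref{eq:Hameq} is rewritten as $\frac{d}{dt}s_t = P^{s_t}(\Delta_{\lambda_t})$ via \eqref{eq:Ps}, the candidate $\psi_t\circ s_0\circ g_t^{-1}$ also solves it (by the forward computation), so must coincide with $s_t$. You instead pull the family back by $\psi_t^{-1}$, observe that $\partial_t(\psi_t^{-1}\circ s_t)$ is tangent to its own image, and invoke the ``tangential time-derivative $\Rightarrow$ constant image'' lemma. Your argument is more self-contained---the paper's appeal to uniqueness leaves implicit that the equation is a genuine pointwise ODE on fibers and that $g_t$ remains a diffeomorphism throughout---while the paper's is shorter. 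Both proofs are equally informal about completeness of the Hamiltonian flow: the paper simply posits that $\{\psi_t\}$ exists on $NS$, and your cut-off argument yields a compactly supported (hence complete) vector field only when $S$ is compact, not merely closed; this is a standard omission in either case.
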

\begin{proof}
	Denote by $\pi : NS \to S$ the projection. First of all, let $s_0,s_1$ be Hamiltonian equivalent coisotropic sections, and let $s_t$, $\psi_t$, $g_t$ be as in Definition~\ref{def:hequi}.(1). The $g_t$'s are completely determined by the $\psi_t$'s via $g_t = \pi \circ \psi_t \circ s_0$. In their turn, the $\psi_t$'s are generated by a smooth family $\{ X_{\lambda_t} \}$ of Hamiltonian vector fields, $\lambda_t \in \Gamma (L)$. Differentiating the identity $s_t = \psi_t \circ s_0 \circ g_t^{-1}$ with respect to $t$, one finds
	\begin{equation}\label{eq:HameqPs}
		\frac{d}{dt} s_t= P^{s_t} (\Delta_{\lambda_t}),
	\end{equation}
	where, for a section $s \in \Gamma (NS)$, the projection $P^s :  (\Der^\bullet L)[1] \to \Gamma (\wedge^\bullet N_\ell S \otimes \ell)[1]$ is defined as in the proof of Proposition \ref{prop:coiss}. To see this, interpret the $s_t$'s as smooth maps, and consider their pull-backs $s_t^\ast : C^\infty (NS) \to C^\infty (S)$. Then $s_t^\ast = (g_t^{-1})^\ast \circ s_0^\ast \circ \psi^\ast_t$ and a straightforward computation shows that
	\[
	\frac{d }{dt} s_t^\ast= s_t^\ast \circ X_{\lambda_t} \circ (\id - \pi^\ast \circ s_t^\ast).
	\]
	which is equivalent to \eqref{eq:HameqPs}. Equation (\ref{eq:Hameq}) now follows from (\ref{eq:Ps}).
	
	Conversely, let $S$ be compact, $s_t$ be a solution of Equation (\ref{eq:Hameq}) interpolating $s_0$ and $s_1$, and let $\{ \psi_t \}$ be the one parameter family of Hamiltonian diffeomorphisms $NS \to NS$ generated by $\{ X_{\lambda_t} \}$. The compactness assumption guarantees that $\psi_t$ is well-defined for all $t \in [0,1]$ (see, e.g.~\cite[Lemma 3.15]{SZ2014}). In view of (\ref{eq:Ps}) again, $s_t$ is the (unique) solution of (\ref{eq:HameqPs}) starting at $s_0$. In particular, $\psi_t$ maps diffeomorphically the image of $s_0$ to the image of $s_t$. Hence, the map $g_t = \pi \circ \psi_t \circ s_0$ is a diffeomorphism and $s_t = \psi_t \circ s_0 \circ g_t^{-1}$.
\end{proof}

Note that if $\{ s_t \}$ is a solution of (\ref{eq:Hameq}) interpolating coisotropic sections $s_0,s_1$, then $s_t$ is a coisotropic section for all $t$. Proposition \ref{prop:Hameq} motivates the following

\begin{definition}\label{def:formHeq}
	Two formal coisotropic deformations
	\begin{equation*}
		s_0 (\eps), s_1(\eps) \in \Gamma (NS)[[\eps]]
	\end{equation*}
	are called \emph{Hamiltonian equivalent} if they are interpolated by a smooth family of formal coisotropic deformations $s_t (\eps) \in \Gamma (NS)[[\eps]]$ (i.e.~$s_t (\eps) = \sum_i s_{t,i} \eps^i$ and the $s_{t,i}$'s depend smoothly on $t$) and there exists a smooth family of formal sections $\lambda_t (\eps) \in \Gamma (L)[[ \eps]]$ of the Jacobi bundle such that
	\[
	\frac{d}{dt} s_t (\eps)= P (\exp \Ll_{I (s_t( \eps))} \Delta_{\lambda_t (\eps)}).
	\]
\end{definition}

We now show that formal coisotropic deformations $s_0 (\eps), s_1(\eps)$ are Hamiltonian equivalent if and only if $-s_0 (\eps), -s_1 (\eps)$ are gauge equivalent solutions of the Maurer-Cartan equation $MC(\xi(\eps)) = 0$. Two solutions $\xi_0 (\eps) , \xi_1 (\eps)$ of the Maurer-Cartan equation are \emph{gauge equivalent} if, by definition, they are interpolated by a smooth family of formal sections $\xi_t (\eps) \in \Gamma (NS) [[\eps]]= \Gamma (\wedge^1 N_\ell S \otimes \ell) [[\eps]]$ and there exists a smooth family of formal sections $\lambda_t (\eps) \in \Gamma (\ell)[[ \eps]] =\Gamma (\wedge^0 N_\ell S \otimes \ell) [[\eps]]$ such that
\begin{equation}\label{eq:HameqMC}
	\frac{d }{dt} \xi_t (\eps) = \sum_{k = 0}^\infty \frac{1}{k!} \mathfrak{m}_{k+1} (\xi_t (\eps) , \dots, \xi_t (\eps) , \lambda_t (\eps)).
\end{equation}
Gauge equivalence is an equivalence relation. Moreover, it follows from Equation (\ref{eq:HameqMC}) that $\xi_t(\eps)$ is a solution of the Maurer-Cartan equation for any $t$.

\begin{proposition}\label{prop:MC_gauge}
	Two formal coisotropic deformations
	\begin{equation*}
		s_0 (\eps), s_1 (\eps) \in \Gamma (NS)[[\eps]]
	\end{equation*}
	are Hamiltonian equivalent if and only if $-s_0(\eps)$ and $-s_1(\eps)$ are gauge equivalent solutions of the Maurer-Cartan equation.
\end{proposition}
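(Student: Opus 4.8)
The plan is to reduce the statement, via Proposition~\ref{prop:mcformal}, to a comparison of two evolutionary equations, and then to identify those equations by means of a derived-bracket computation together with a reduction of the Hamiltonian sections involved. Recall that by Proposition~\ref{prop:mcformal} the assignment $s(\eps)\mapsto \xi(\eps):=-s(\eps)$ is a bijection between formal coisotropic deformations of $S$ and formal Maurer--Cartan elements of the $L_\infty$-algebra $(\Gamma(\wedge^\bullet N_\ell S\otimes\ell)[1],\{\frak m_k\})$. So it suffices to prove that $s_0(\eps),s_1(\eps)$ are Hamiltonian equivalent in the sense of Definition~\ref{def:formHeq} if and only if $-s_0(\eps),-s_1(\eps)$ are gauge equivalent in the sense of \eqref{eq:HameqMC}. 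Comparing the two definitions, the only discrepancies are (i) the precise shape of the right-hand sides of the two evolutionary equations, and (ii) the fact that Definition~\ref{def:formHeq} lets the family $\lambda_t(\eps)$ range over $\Gamma(L)[[\eps]]$, whereas gauge equivalence uses families in $\Gamma(\ell)[[\eps]]$. I would deal with (i) and (ii) in turn.

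For (i), the key point is the identity
\begin{equation*}
P\big(\exp\Ll_{I(s)}\Delta_{\lambda}\big)=-\sum_{k\ge 0}\frac{1}{k!}\,\frak m_{k+1}(-s,\ldots,-s,\lambda),
\end{equation*}
valid for all $s\in\Gamma(NS)$ and all $\lambda\in\Gamma(\ell)$, the latter regarded as a fibre-wise constant section of $L_{NS}$ with Hamiltonian derivation $\Delta_\lambda$. To prove it I would expand $\exp\Ll_{I(s)}=\sum_k\tfrac1{k!}\Ll^k_{I(s)}$, use $\Delta_\lambda=-[J,\lambda]^{SJ}$ (see \eqref{eq:trianglelambda}), and repeatedly apply the graded Jacobi identity for $[-,-]^{SJ}$. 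Two facts make all ``mixed'' terms vanish: the image of $I$ is an abelian subalgebra of $((\Der^\bullet L_{NS})[1],[-,-]^{SJ})$ (proof of Proposition~\ref{prop:linfty}), and $[I(s),\lambda]^{SJ}=I(s)(\lambda)=\D_{\pi^\ast s}\lambda=0$ because $\lambda$ is $\D$-constant along the fibres. One is then left, after bookkeeping the signs exactly as in the derivation of \eqref{eq:mcl}, with precisely the iterated brackets occurring in the defining formula \eqref{eq:higher_derived_brackets} of the $\frak m_{k+1}$'s, which gives the displayed identity. Substituting $\xi_t(\eps)=-s_t(\eps)$, this identity turns the evolutionary equation of Definition~\ref{def:formHeq}, restricted to families $\lambda_t(\eps)\in\Gamma(\ell)[[\eps]]$, exactly into the gauge-equivalence equation \eqref{eq:HameqMC}, and conversely; hence Hamiltonian equivalence \emph{through families in $\Gamma(\ell)[[\eps]]$} coincides with gauge equivalence of $-s_0(\eps)$ and $-s_1(\eps)$.

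For (ii), one inclusion is trivial since $\Gamma(\ell)\subset\Gamma(L)$; it remains to show that if $s_0(\eps),s_1(\eps)$ are Hamiltonian equivalent through some $\lambda_t(\eps)\in\Gamma(L)[[\eps]]$, then they are already Hamiltonian equivalent through a family in $\Gamma(\ell)[[\eps]]$. Using \eqref{eq:Ps}, the right-hand side of the evolutionary equation equals $P^{s_t(\eps)}(\Delta_{\lambda_t(\eps)})$, and $P^{s}$ factors through the symbol map, restriction to $s(S)$, and the projection onto $N(s(S))\cong NS$; in particular it only sees $\lambda_t(\eps)$ through the restriction of $X_{\lambda_t(\eps)}$ to $s_t(\eps)(S)$ modulo $T(s_t(\eps)(S))$. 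Since each $s_t(\eps)(S)$ is coisotropic, Lemma~\ref{lem:cois}.(3) (applied order by order in $\eps$) shows that $X_\mu$ is tangent to $s_t(\eps)(S)$ whenever $\mu$ vanishes on $s_t(\eps)(S)$; therefore $P^{s_t(\eps)}(\Delta_{\lambda_t(\eps)})$ depends on $\lambda_t(\eps)$ only through its restriction to $s_t(\eps)(S)$. I would then replace $\lambda_t(\eps)$ by the unique fibre-wise constant section $\lambda_t'(\eps):=\pi^\ast\!\big(s_t(\eps)^\ast\lambda_t(\eps)\big)\in\Gamma(\ell)[[\eps]]$ agreeing with $\lambda_t(\eps)$ along $s_t(\eps)(S)$ — well defined as a formal power series since $\pi\circ s_t(\eps)=\id_S$, and smooth in $t$. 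Then $\{s_t(\eps)\}$ solves the same evolutionary equation with $\{\lambda_t'(\eps)\}$, reducing (ii) to the case already settled in (i).

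The main obstacle is step (ii): one has to recognise that the evolutionary equation for coisotropic sections is insensitive to the Hamiltonian away from the (moving) submanifold — which rests on coisotropy via Lemma~\ref{lem:cois} — and then produce a fibre-wise constant representative of the Hamiltonian along each $s_t(\eps)(S)$, all within the formal category. Step (i) is, by contrast, the $L_\infty$-theoretic analogue of the derived-bracket identity \eqref{eq:mcl}, its only delicate aspect being the sign bookkeeping; combined with Proposition~\ref{prop:Hameq} it explains why the formal notion of Definition~\ref{def:formHeq} is the correct formal shadow of smooth Hamiltonian equivalence.
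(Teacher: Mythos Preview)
Your overall strategy is sound, and step~(i) is correct---indeed it holds for \emph{all} $s$, without any coisotropy assumption, since for $\lambda\in\Gamma(\ell)$ one has $I(\lambda)=\pi^\ast\lambda=\lambda$ and $[I(s),\lambda]^{SJ}=0$, so the derived-bracket expansion collapses directly to the $\frak m_{k+1}$'s. Step~(ii) is where your route diverges from the paper's, and where your argument carries the most weight.

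The paper does \emph{not} split into (i) and (ii). Instead, for arbitrary $\lambda_t(\eps)\in\Gamma(L)[[\eps]]$ it invokes the identity
\[
P[\square_1,\square_2]^{SJ}=P[IP\square_1,\square_2]^{SJ}+P[\square_1,IP\square_2]^{SJ},
\]
which is simply a restatement of the fact that $\ker P$ is a Lie subalgebra (a Voronov $V$-data axiom). Applied to $\square_1=J_k(\eps):=[[\cdots[J,I(-s_t(\eps))]^{SJ}\cdots],I(-s_t(\eps))]^{SJ}$ and $\square_2=\lambda_t(\eps)$, and summed over $k$, the first piece assembles into $-P[I(MC(-s_t(\eps))),\lambda_t(\eps)]^{SJ}$, which vanishes because $s_t(\eps)$ is formally coisotropic; the second piece contains $IP\lambda_t(\eps)=I(\lambda_t(\eps)|_S)$ and yields precisely the gauge right-hand side with $\lambda_t(\eps)|_S\in\Gamma(\ell)[[\eps]]$. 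Thus the passage from $\Gamma(L)$ to $\Gamma(\ell)$ is automatic and purely algebraic: the relevant $\ell$-valued family is always the plain restriction $\lambda_t(\eps)|_S$, not the more elaborate $\pi^\ast\big(s_t(\eps)^\ast\lambda_t(\eps)\big)$ that you construct.

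Your step~(ii) is not wrong, but it carries real overhead that you gloss over: you must define pullback along a \emph{formal} section, check that the resulting $\lambda_t'(\eps)$ lies in $\Gamma(\ell)[[\eps]]$ and is smooth in $t$, and---most delicately---argue that the Hamiltonian right-hand side is insensitive to sections vanishing on the \emph{formal} locus $s_t(\eps)(S)$. Your appeal to ``Lemma~\ref{lem:cois}.(3) applied order by order in~$\eps$'' is plausible but not automatic, since the coisotropy constraint and the vanishing constraint are coupled across $\eps$-orders. The paper's algebraic route sidesteps all of this. What your approach buys is that (i) is a clean identity valid even for non-coisotropic $s$; what the paper's approach buys is a single uniform computation with no formal-geometric interpretation required.
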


\begin{proof} Recall that $\ker P \subset  (\Der^\bullet L)[1]$ is a Lie subalgebra. As Voronov notes \cite{Voronov2005}, this can be rephrased as:
	\begin{equation}\label{eq:kerP}
		P[\square_1, \square_2]^{SJ} = P[IP\square_1 , \square_2]^{SJ} + P[\square_1, IP \square_2]^{SJ},
	\end{equation}
	$\square_1, \square_2 \in  (\Der^\bullet L)[1]$. Now, let $\{ s_t (\eps) \}$ be a family of formal coisotropic deformations, and let $\{ \lambda_t (\eps) \}$ be a family of formal sections of $L$. Put
	\[
	J_k (\eps) := [ \cdots [ J, \underset{k \text{ times}}{\underbrace{I(-s_t(\eps))]^{SJ} \cdots, I(-s_t(\eps))]^{SJ}}},
	\]
	In particular, $P J_k (\eps) = \mathfrak m_k (-s_t (\eps), \dots, -s_t(\eps))$. Compute
	\begin{align*}
		\quad  P(\exp \Ll_{I (s_t (\eps))} \Delta_{\lambda_t (\eps)} ) &  = -\sum_{k=0}^\infty \frac{1}{k!} P [ J_k (\eps), \lambda_t (\eps)]^{SJ} \\
		& = -\sum_{k=0}^\infty \frac{1}{k!} P [ IPJ_k (\eps), \lambda_t (\eps)]^{SJ}
		-\sum_{k=0}^\infty \frac{1}{k!} P [ J_k (\eps), IP\lambda_t (\eps)]^{SJ} \\
		& = - P [ I (MC (-s_t(\eps))), \lambda_t (\eps)]^{SJ}
		-\sum_{k=0}^\infty \frac{1}{k!} P [ J_k (\eps), I (\lambda_t (\eps)|_S)]^{SJ} \\
		& = -\sum_{k=0}^\infty \frac{1}{k!} \mathfrak{m}_{k+1} (-s_t(\eps), \dots, -s_t(\eps), \lambda_t (\eps) |_S ),
	\end{align*}
	where we used (\ref{eq:kerP}), and the fact that $MC(-s_t (\eps)) = 0$ for all $t$. This concludes the proof.
\end{proof}

\begin{corollary}\label{cor:infequi} Two solutions of~\eqref{eq:infdef} are infinitesimally Hamiltonian equivalent if and only if they are cohomologous in the complex $(\Gamma (\wedge^\bullet N_\ell S \otimes \ell)[1], \mathfrak{m}_1)$. Hence, the infinitesimal moduli space (i.e.~the set of infinitesimal Hamiltonian equivalence classes) of infinitesimal coisotropic deformations of $S$ is $H^0(\Gamma (\wedge^\bullet N_\ell S \otimes \ell)[1], \mathfrak{m}_1) = H^1 (N_\ell {}^\ast S , \ell)$.
\end{corollary}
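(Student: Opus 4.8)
The plan is to identify the degree-zero coboundaries of the complex $(\Gamma (\wedge^\bullet N_\ell S \otimes \ell)[1], \frak m_1)$ with the vertical components along $S$ of Hamiltonian vector fields, and then read off both assertions. By Corollary \ref{cor:inf1}, a section $s \in \Gamma (NS) = \Gamma (\wedge^1 N_\ell S \otimes \ell)$ solves (\ref{eq:infdef}) precisely when $\frak m_1 (s) = 0$, i.e.~when $s$ is a degree-zero cocycle of this complex; hence two solutions $s_0, s_1$ are cohomologous if and only if $s_1 - s_0 = \frak m_1 (\lambda)$ for some $\lambda$ in the degree $-1$ component $\Gamma (\wedge^0 N_\ell S \otimes \ell) = \Gamma (\ell)$.

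First I would compute $\frak m_1$ on the degree $-1$ component. Under the standing identification $\Gamma (\ell) \subset \Gamma (L_{NS})$ by pull-back (the one used in Proposition \ref{prop:CF}), the embedding $I$ restricts to this inclusion, so the derived-bracket formula (\ref{eq:higher_derived_brackets}) together with (\ref{eq:trianglelambda}) gives, for $\lambda \in \Gamma (\ell)$, the identity $\frak m_1 (\lambda) = P [J, \pi^\ast \lambda]^{SJ} = - P (\Delta_{\pi^\ast \lambda})$, where $P : \Der\, L_{NS} \to \Gamma (NS)$ is the composition (\ref{eq:seq_P}) of the symbol with restriction to $S$ and the projection $T(NS)|_S = TS \oplus NS \to NS$ onto the second summand. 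In other words, $\frak m_1 (\lambda)$ is, up to a sign, the vertical component along $S$ of the Hamiltonian vector field $X_{\pi^\ast \lambda}$.

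Next I would show that, as $\lambda$ ranges over $\Gamma (\ell)$, the sections $\frak m_1 (\lambda)$ exhaust the set $\{ P (\Delta_\mu) : \mu \in \Gamma (L) \}$ of all vertical components along $S$ of Hamiltonian vector fields. One inclusion is the previous step. For the converse, given $\mu \in \Gamma (L) = \Gamma (L_{NS})$, write $\mu = \pi^\ast (\mu|_S) + \nu$ with $\nu \in \Gamma_S$; since $S$ is coisotropic, $X_\nu$ is tangent to $S$ (Lemma \ref{lem:cois}), hence $P (\Delta_\nu) = 0$ and $P (\Delta_\mu) = P (\Delta_{\pi^\ast (\mu|_S)}) = - \frak m_1 (\mu|_S)$. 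Consequently, by Definition \ref{def:hequi}.(2), two solutions $s_0, s_1$ of (\ref{eq:infdef}) are infinitesimally Hamiltonian equivalent iff $s_1 - s_0$ lies in the image of $\frak m_1$ restricted to the degree $-1$ component, that is, iff $[s_0] = [s_1]$ in $(\Gamma (\wedge^\bullet N_\ell S \otimes \ell)[1], \frak m_1)$. Finally, the moduli space of infinitesimal coisotropic deformations of $S$ is, by definition, the set of cocycles modulo coboundaries in degree zero, i.e.~$H^0 (\Gamma (\wedge^\bullet N_\ell S \otimes \ell)[1], \frak m_1)$; undoing the degree shift and using $\frak m_1 = d_{N_\ell {}^\ast S, \ell}$ from Remark \ref{rem:deform}.(1), this equals $H^1 (N_\ell {}^\ast S, \ell)$, the de Rham cohomology of the Jacobi algebroid $N_\ell {}^\ast S$ with values in $\ell$.

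Most of this is bookkeeping with degrees and signs; the one point deserving care is the vanishing $P (\Delta_\nu) = 0$ for $\nu \in \Gamma_S$, which is precisely where coisotropy of $S$ is used and which makes $P (\Delta_\mu)$ depend only on $\mu|_S$. The statement is, in spirit, the first-order (infinitesimal) counterpart of Proposition \ref{prop:MC_gauge}, obtained by keeping only the binary-bracket term $\frak m_1$ in (\ref{eq:HameqMC}).
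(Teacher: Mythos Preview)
Your proof is correct and follows essentially the same route as the paper. The paper presents the corollary as an immediate consequence of Proposition~\ref{prop:MC_gauge}: the key identity there, $P(\exp \Ll_{I(s_t(\eps))}\Delta_{\lambda_t(\eps)}) = -\sum_k \tfrac{1}{k!}\frak m_{k+1}(-s,\ldots,-s,\lambda_t|_S)$, reduces at zeroth order in $s$ to exactly your computation $P(\Delta_\mu) = -\frak m_1(\mu|_S)$; you have simply unpacked this infinitesimal case directly, including the decomposition $\mu = \pi^\ast(\mu|_S)+\nu$ and the use of Lemma~\ref{lem:cois} to kill the $\Gamma_S$-part, which the paper leaves implicit in the phrase ``corollary''.
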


\begin{remark}\label{rem:moduli_1}\
	Corollary \ref{cor:infequi} generalizes \cite[Lemma 6.6]{LO2012}, which has been proved by a different method.
\end{remark}
Now, we establish necessary and sufficient conditions for the convergence of both the Maurer-Cartan series $MC(-s)$ and the series
\begin{equation}\label{eq:delta_MC}
	\delta_\lambda MC(-s) := \sum_{k=0}^\infty \frac{1}{k!} \mathfrak{m}_{k+1} (-s, \dots, -s, \lambda )
\end{equation}
for generic sections $s \in \Gamma(NS)$ and $\lambda \in \Gamma (\ell)$. In this way, we can describe moduli of coisotropic sections in terms of gauge equivalence classes of non-formal solutions of the Maurer-Cartan equation. First of all, let $E$ and $L$ be as in the beginning of Section \ref{sec:radially_entire}. A multi-differential operator $\Delta \in  (\Der^\bullet L)[1]$ is \emph{fiber-wise entire} if it maps linear sections (of $L$) to fiber-wise entire sections. Equivalently, $\Delta$ is fiber-wise entire if its components in vector bundle coordinates are fiber-wise entire.

\begin{theorem}\label{prop:fana1} The Jacobi bi-differential operator $J$ is fiber-wise entire iff, for all sections $s \in \Gamma (NS)$, and $\lambda \in \Gamma (L)$, the Maurer-Cartan series $MC (-s)$ converges to $P (\exp I(s)_\ast J)$, and the series $\delta_{\lambda|_S} MC (-s)$ (\ref{eq:delta_MC}) converges to $P(\exp I(s)_\ast \Delta_\lambda)$, in the sense of point-wise convergence.
\end{theorem}
\begin{proof} We already know that the bi-linear form $\Lambda_J$ is fiber-wise entire if and only if $MC(-s)$ converges for all $s$. Now, it is easy to see that $P (\exp \Ll_{I(s)} \Delta_\lambda) = P (\exp \Ll_{I(s)} X_\lambda)$ for all $s \in \Gamma (NS)$, and $\lambda \in \Gamma (L)$ (cf.~(\ref{eq:PJvsPLambda})). Moreover, from the proof of Proposition \ref{prop:MC_gauge}, we get
	\[
	\delta_{\lambda|_S} MC (-s) = - P (\exp \Ll_{I(s)} \Delta_\lambda) = - P (\exp \Ll_{I(s)} X_\lambda).
	\]
	Therefore, similarly as in the proof of Proposition \ref{prop:fana}, we find
	\[
	\delta_{\lambda|_S} MC (-s) = -P \sum_{k=0}^\infty \frac{1}{k!} \left. \frac{d^k}{d t^k} \right|_{t=0}(\Phi_{-t})_\ast X_\lambda.
	\]
	The bi-differential operator $J$ is locally given by (\ref{eq:components_of_J_bis}), hence a straightforward computation shows that
	\begin{equation*}
		\delta_{\lambda|_S} MC (-s) = \sum_{k=0}^\infty \frac{1}{k!}\! \left.\frac{d^k}{dt^k}\!\right|_{t=0}\!\! \left[ 2\partial_i g (J^{ai}\! \circ\! ts)\!-\!2ts_j^a\partial_i g (J^{ij}\! \circ\! ts)\! +\! g (J^a\! \circ\! ts) \!-\!t s^a_i g (J^i\! \circ\! s) \right]\! \partial_a,
	\end{equation*}
	where we used the same notations as in the proof of Proposition \ref{prop:fana}, and $g$ is the component of $\lambda|_S$ in the basis $\mu$.
	The assertion now follows in a very similar way as in the proof of Proposition \ref{prop:fana}.
\end{proof}

\begin{corollary}\label{cor:hequi}
	Let $(M, L, J \equiv \{-,-\})$ be a Jacobi manifold, and let $S \subset M$ be a compact coisotropic submanifold equipped with a fat tubular neighborhood $\tau : \ell \INTO L$. If $\tau_\ast^{-1}J$ is fiber-wise entire, then two solutions $s_0, s_1 : S \to NS$ of the (well-defined) Maurer-Cartan equation $MC (-s) = 0$ are Hamiltonian equivalent if and only if they are interpolated by a smooth family of sections $s_t \in \Gamma (NS)$ and there exists a smooth family of sections $\lambda_t$ of $\ell$ such that $s_t$ is a solution of the following well-defined evolutionary equation:
	\[
	\frac{d}{dt}(-s_t) = \delta_{\lambda_t} MC (-s_t).
	\]
\end{corollary}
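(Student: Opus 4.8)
The plan is to deduce Corollary~\ref{cor:hequi} from the non-formal characterization of Hamiltonian equivalence in Proposition~\ref{prop:Hameq}, by running the very argument that turned Proposition~\ref{prop:Hameq} into Proposition~\ref{prop:MC_gauge} in the formal category, with the fiber-wise entireness of $\tau_\ast^{-1}J$ guaranteeing that every power series occurring in that argument actually converges.

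First I would dispose of well-definedness. Fiber-wise entireness of $\tau_\ast^{-1}J$ implies fiber-wise entireness of its bi-symbol, so by Proposition~\ref{prop:fana} (equivalently Corollary~\ref{cor:conver}) the Maurer-Cartan series $MC(-s)$ converges for every $s\in\Gamma(NS)$ and $s$ is coisotropic iff $MC(-s)=0$; and by Theorem~\ref{prop:fana1} the series $\delta_\lambda MC(-s)$ of~(\ref{eq:delta_MC}) converges for every $s\in\Gamma(NS)$ and $\lambda\in\Gamma(\ell)$. Hence both the Maurer-Cartan equation and the evolutionary equation in the statement are meaningful. Next I would isolate the identity that carries the proof: for a \emph{coisotropic} section $s$ and $\lambda\in\Gamma(L)$,
\[
P(\exp I(-s)_\ast \Delta_\lambda) = - \delta_{\lambda|_S} MC(-s).
\]
This is exactly the chain of equalities in the proof of Proposition~\ref{prop:MC_gauge}, now read for honest sections in place of $\eps$-power series: the generalized Leibniz identity~(\ref{eq:kerP}) for $\ker P$ is a pointwise identity in $(\Der^\bullet L)[1]$, the series $\sum_k \tfrac1{k!} P[J_k,-]^{SJ}$ converges by the previous step, and the term $-P[I(MC(-s)),\lambda]^{SJ}$ vanishes because $MC(-s)=0$. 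In particular the left-hand side depends on $\lambda$ only through $\lambda|_S$, so in Proposition~\ref{prop:Hameq} the interpolating Hamiltonian may be taken to be a section of $\ell$ rather than of $L$.

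With this in hand the two implications are short. For the ``only if'' part, if $s_0,s_1$ are Hamiltonian equivalent, Proposition~\ref{prop:Hameq} supplies a smooth interpolating family $\{s_t\}$ and sections $\lambda_t\in\Gamma(L)$ with $\tfrac{d}{dt}s_t = P(\exp I(-s_t)_\ast \Delta_{\lambda_t})$; each $s_t$ is the image of $s_0$ under a Hamiltonian diffeomorphism, hence coisotropic, so the displayed identity gives $\tfrac{d}{dt}s_t = \delta_{\mu_t} MC(-s_t)$ with $\mu_t := -\lambda_t|_S \in \Gamma(\ell)$. For the ``if'' part, suppose $\{s_t\}$ interpolates coisotropic sections $s_0,s_1$ and solves $\tfrac{d}{dt}s_t = \delta_{\lambda_t} MC(-s_t)$ for some smooth $\lambda_t\in\Gamma(\ell)$; choose smooth extensions $\tilde\lambda_t\in\Gamma(L)$ of $-\lambda_t$, let $\{\psi_t\}$ be the Hamiltonian isotopy of $(NS,L,J)$ generated by $\{X_{\tilde\lambda_t}\}$, and set $g_t := \pi\circ\psi_t\circ s_0$, $\sigma_t := \psi_t\circ s_0\circ g_t^{-1}$. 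Arguing as in the converse part of the proof of Proposition~\ref{prop:Hameq} (using~(\ref{eq:Ps}) and~(\ref{eq:HameqPs})), $\sigma_t$ is coisotropic, $\sigma_0 = s_0$, and $\tfrac{d}{dt}\sigma_t = P(\exp I(-\sigma_t)_\ast \Delta_{\tilde\lambda_t}) = \delta_{\lambda_t} MC(-\sigma_t)$; by uniqueness of solutions of this evolutionary equation, $s_t = \sigma_t$ for all $t$, whence $s_1 = \psi_1\circ s_0\circ g_1^{-1}$ and $s_0,s_1$ are Hamiltonian equivalent.

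I expect the main obstacle to be analytic bookkeeping rather than anything conceptual: one must check that all the series that enter — $MC(-s_t)$, $\delta_{\lambda_t} MC(-s_t)$, and the expansion of $P(\exp I(-s_t)_\ast \Delta_{\lambda_t})$ — converge and may be manipulated term by term, which is precisely what fiber-wise entireness provides through Proposition~\ref{prop:fana} and Theorem~\ref{prop:fana1}, and that the right-hand side $s\mapsto\delta_{\lambda_t} MC(-s)$ is a smooth (indeed fiber-wise real-analytic) operator, so that the uniqueness statement invoked in the ``if'' direction is legitimate; the compactness of $S$ makes the latter routine. A minor point to be careful with is that the displayed identity is valid only along coisotropic sections, which is why in the ``if'' direction one passes through the auxiliary family $\{\sigma_t\}$ instead of rewriting the given equation directly.
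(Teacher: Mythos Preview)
Your proposal is correct and follows precisely the route the paper has in mind: the corollary is stated without proof because it is meant to be an immediate combination of Proposition~\ref{prop:Hameq} (the non-formal characterization of Hamiltonian equivalence via the evolutionary equation $\tfrac{d}{dt}s_t = P(\exp I(-s_t)_\ast \Delta_{\lambda_t})$) with Theorem~\ref{prop:fana1} (convergence of $\delta_{\lambda|_S}MC(-s)$ to that very quantity when $J$ is fiber-wise entire), using the algebraic identity from the proof of Proposition~\ref{prop:MC_gauge} to pass between the two expressions. Your handling of signs via $\mu_t=-\lambda_t|_S$ and the auxiliary family $\{\sigma_t\}$ in the ``if'' direction is careful and correct.

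Two minor remarks. First, $S$ is only assumed closed in the paper, not compact; the uniqueness you invoke for the evolutionary equation is the same one already used (without further justification) in the converse part of Proposition~\ref{prop:Hameq}, so you are on equal footing with the paper there. Second, your caution in the ``if'' direction---working through $\{\sigma_t\}$ rather than assuming $s_t$ coisotropic---is appropriate, though one could alternatively argue (as the paper does just after~(\ref{eq:HameqMC}) in the formal setting) that any solution of the evolutionary equation starting at a Maurer--Cartan element remains one, and then apply the identity directly.
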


\begin{remark}\label{rem:moduli_2} \
	Immediately after a preliminary version of the present work appeared on arXiv, Sch\"atz and Zambon, independently, finalized a pre-print, now published \cite{SZ2014}, where they discuss the moduli space of coisotropic submanifolds of a symplectic manifold. In particular, they use our same method to prove Corollary \ref{cor:hequi} in the symplectic case (see \cite[Theorem 3.21]{SZ2014}). Note that $\tau_\ast^{-1}J$ is automatically fiber-wise entire in Sch\"atz-Zambon situation and, therefore, convergence issues don't appear in their work.
\end{remark}

\section{The contact case}\label{sec:contact}
Contact manifolds form a distinguished class of Jacobi manifolds. In this section we consider in some details (regular) coisotropic submanifolds in a contact manifold $(M,C)$. A normal form theorem is available in this case. As a consequence, the $L_\infty$-algebra of a regular coisotropic submanifold $S$ in $(M,C)$ does only depend on the intrinsic pre-contact geometry of $S$. In particular, we get rather efficient formulas (from a computational point of view) for the multibrackets, analogous to those of Oh and Park in the symplectic case \cite[Equation (9.17)]{OP2005}.

\subsection{Coisotropic submanifolds in contact manifolds}\label{sec:cois_cont}

Let $C$ be an hyperplane distribution on a smooth manifold $M$.
Denote by $L$ the quotient line bundle $TM/C$, and by $\theta:TM\to L$, $X \mapsto \theta (X) := X \Mod C$ the projection. We will often interpret $\theta$ as an $L$-valued differential $1$-form on $M$, and call it the \emph{structure form of $C$}. The \emph{curvature form} of $(M,C)$ is the vector bundle morphism $\omega:\wedge^2C\to L$ well-defined by
$\omega(X,Y)=\theta([X,Y])$, with $X,Y\in\Gamma(C)$.
Consider also the vector bundle morphism $\omega^\flat : C \to C^\ast \otimes L$, $X\mapsto\omega^\flat(X):=\omega(X,-)$. The \emph{characteristic distribution} of $(M,C)$, is the (generically singular) distribution $\ker\omega^\flat = C^{\bot_\omega}$, where we denoted by $V^{\bot_\omega}$ the $\omega$-orthogonal complement of a subbundle $V \subset C$. Note that the definition of curvature form works verbatim for distributions of arbitrary codimension (See also \cite[Section 4]{OP2005} for a detailed exposition on the curvature form).

\begin{remark}
	The characteristic distribution of an hyperplane distribution $C$ is involutive.
\end{remark}

\begin{definition}
	\label{mydef:pre-contact}
	A \emph{pre-contact structure} on a smooth manifold $M$ is an hyperplane distribution $C$ on $M$ such that its characteristic distribution $\ker \omega^\flat$ has constant dimension.
	A \emph{pre-contact manifold} $(M,C)$ is a smooth manifold $M$ equipped with a pre-contact structure $C$. The integral foliation of $\ker \omega^\flat$ is called the \emph{characteristic foliation} of $C$ and will be denoted by $\mathcal{F}$.
\end{definition}

See \cite[Section 5]{OW2013} where essentially the same definition was given in terms of the one-form generating the
hyperplane distribution in relation
to the study of normal forms of a contact form of Morse-Bott type.

\begin{remark}
	The curvature form $\omega$ of $(M,C)$ measures how far is $C$ from being integrable.
	Indeed, $C$ is integrable if and only if $\omega = 0$, or, equivalently, $\omega^\flat=0$. Accordingly, $C$ is said to be \emph{maximally non-integrable} when $\omega$ is non degenerate, or, equivalently, $\ker \omega^\flat =0$. If $C$ is maximally non-integrable, then $C$ is even-dimensional, $M$ is odd-dimensional, and $\omega^\flat$ is a vector bundle isomorphism, whose inverse will be denoted by $\omega^\#:C^\ast\otimes L\to C$.
\end{remark}

\begin{definition}
	\label{mydef:contact}
	A \emph{contact structure} on a smooth manifold $M$ is a maximally non-integrable hyperplane distribution $C$ on $M$.
	A \emph{contact manifold} is a smooth manifold $M$ equipped with a contact structure $C$.
\end{definition}

Let $(M_1,C_1)$ and $(M_2, C_2)$ be contact manifolds. A contactomorphism $\phi:(M_1,C_1) \to (M_2,C_2)$ is a diffeomorphism $\phi:M_1\to M_2$ such that
\begin{equation*}
	(d\phi)C_1=C_2.
\end{equation*}

An \emph{infinitesimal contactomorphism} (or \emph{contact vector field}) of a contact manifold $(M,C)$ is a vector field $X\in\mathfrak X(M)$ whose flow consists of local contactomorphisms. Equivalently, $X \in \mathfrak X(M)$ is a contact vector field if
$
[X, \Gamma (C)] \subset \Gamma (C)
$.
Contact vector fields of $(M,C)$ form a Lie subalgebra of $\mathfrak X(M)$ which will be denoted by $\mathfrak X_C$ (see e.g. \cite[Proposition 2.3]{OW2013}).

\begin{proposition}[cf.~\cite{CS2013}, {\cite[Proposition 2.3]{OW2013}}]
	\label{prop:contact_splitting}
	Let $(M,C)$ be a contact manifold.
	There is a natural direct sum decomposition of $\mathbb R$-vector spaces:
	$
	\mathfrak X(M)=\mathfrak X_C\oplus\Gamma(C).
	$
\end{proposition}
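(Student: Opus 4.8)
The plan is to reduce the statement entirely to the non-degeneracy of the curvature form $\omega$. The crucial observation is that for an \emph{arbitrary} vector field $X\in\mathfrak X(M)$ the assignment $Y\mapsto\theta([X,Y])$ is $C^\infty(M)$-linear on $\Gamma(C)$: indeed, for $f\in C^\infty(M)$ and $Y\in\Gamma(C)$ one has $\theta([X,fY])=f\,\theta([X,Y])+X(f)\,\theta(Y)=f\,\theta([X,Y])$, since $C=\ker\theta$ forces $\theta(Y)=0$. Hence $X$ determines a bundle morphism $\beta_X:=\theta([X,-])|_{C}\colon C\to L$, i.e.\ a section $\beta_X\in\Gamma(C^\ast\otimes L)$. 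Two elementary facts will then do the job: first, $X\mapsto\beta_X$ is $\mathbb R$-linear, and by the very definition of the curvature form $\beta_Z=\omega^\flat(Z)$ whenever $Z\in\Gamma(C)$, so that $\beta_{X+Z}=\beta_X+\omega^\flat(Z)$; second, unwinding the definition of a contact vector field, $X\in\mathfrak X_C$ if and only if $[X,\Gamma(C)]\subset\Gamma(C)$, i.e.\ if and only if $\beta_X=0$.

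Granting this, I would argue as follows. For directness of the sum: if $W\in\mathfrak X_C\cap\Gamma(C)$, then $\omega^\flat(W)=\beta_W=0$, and since $C$ is maximally non-integrable $\omega^\flat\colon C\to C^\ast\otimes L$ is injective, so $W=0$. For exhaustiveness: given $X\in\mathfrak X(M)$, use that $\omega^\flat$ is in fact an isomorphism (the contact hypothesis again), with inverse $\omega^\#\colon C^\ast\otimes L\to C$, and set $Z_X:=\omega^\#(\beta_X)\in\Gamma(C)$. Then $\beta_{X-Z_X}=\beta_X-\omega^\flat(Z_X)=\beta_X-\beta_X=0$, so $X-Z_X\in\mathfrak X_C$, and $X=(X-Z_X)+Z_X$ exhibits $X$ as the sum of a contact vector field and a section of $C$. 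This proves $\mathfrak X(M)=\mathfrak X_C\oplus\Gamma(C)$; the decomposition is manifestly canonical, the projection onto $\Gamma(C)$ being $X\mapsto\omega^\#(\beta_X)$ and the projection onto $\mathfrak X_C$ being $X\mapsto X-\omega^\#(\beta_X)$.

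I do not expect a genuine obstacle here: the entire content is the tensoriality of $X\mapsto\theta([X,-])|_{C}$ together with the fact that invertibility of $\omega^\flat$ is precisely maximal non-integrability. The only point deserving a line of care is checking that $C^\infty(M)$-linearity and the identity $\beta_Z=\omega^\flat(Z)$ for $Z\in\Gamma(C)$, both of which are immediate from $C=\ker\theta$. (As an alternative for the surjectivity step one could take $\lambda:=\theta(X)\in\Gamma(L)$ and use the Hamiltonian vector field $X_\lambda$ of the associated abstract Jacobi structure, since $\theta(X_\lambda)=\lambda$ and $X_\lambda\in\mathfrak X_C$, so that $X-X_\lambda\in\Gamma(C)$; but the curvature-form argument above is self-contained and avoids invoking the Jacobi bracket.)
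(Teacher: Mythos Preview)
Your proof is correct and takes essentially the same approach as the paper: the map you call $\beta_X$ is exactly the paper's $\phi_X$, and the paper also uses $\omega^\#$ to split the resulting short exact sequence $0\to\mathfrak X_C\to\mathfrak X(M)\to\Gamma(C^\ast\otimes L)\to 0$. The only difference is packaging---you verify the direct sum decomposition explicitly, while the paper phrases the same computation as a $C^\infty(M)$-linear splitting of a short exact sequence.
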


\begin{proof}
	For $X\in\mathfrak X(M)$, let $\phi_X\in\Gamma(C^\ast\otimes L)$ be defined by $\phi_X(Y)=\theta([X,Y])$, $Y\in\Gamma(C)$. The first order differential operator
	$
	\phi:\mathfrak X(M)\to\Gamma(C^\ast\otimes L)$, $X\mapsto\phi_X,
	$
	sits in a short exact sequence of $\mathbb R$-linear maps
	\begin{equation}
		\label{eq:contact_splitting_short_exact_sequence}
		0\longrightarrow \mathfrak X_C \lhook\joinrel\longrightarrow \mathfrak X(M)\overset{\phi}{\longrightarrow} \Gamma(C^\ast\otimes L)\longrightarrow 0,
	\end{equation}
	where the second arrow is the inclusion. Now the $C^\infty(M)$-linear map $\Gamma (C^\ast \otimes L) \to \mathfrak{X}(M)$ given by the composition
	\[
	\Gamma(C^\ast\otimes L)\overset{\omega^\#}{\longrightarrow} \Gamma(C)\longrightarrow \mathfrak X(M)
	\]
	splits the sequence (\ref{eq:contact_splitting_short_exact_sequence}).
\end{proof}
In what follows, for $\lambda \in \Gamma (L)$, we denote by $X_\lambda$ the unique contact vector field such that $\theta (X_\lambda) = \lambda$.

\begin{proposition}
	\label{prop:Jacobi_from_contact}
	A contact structure $C$ induces a canonical Jacobi structure $(L,\{-,-\})$, where the Lie bracket $\{-,-\}$ on $\Gamma(L)$ is uniquely determined by
	$
	X_{\{\lambda,\mu\}}=[X_\lambda,X_\mu]$. The symbol of the first order differential operator $\Delta_\lambda := \{\lambda,-\}\in  \der L$ is $X_\lambda$.
\end{proposition}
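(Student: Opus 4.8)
The plan is to use the splitting $\mathfrak{X}(M) = \mathfrak{X}_C \oplus \Gamma(C)$ from Proposition \ref{prop:contact_splitting} to set up a linear isomorphism $\Gamma(L) \xrightarrow{\sim} \mathfrak{X}_C$, $\lambda \mapsto X_\lambda$, and then \emph{define} the bracket on $\Gamma(L)$ by transporting the commutator bracket of contact vector fields. Concretely, first I would observe that the map $\mathfrak{X}_C \to \Gamma(L)$, $X \mapsto \theta(X)$, is a bijection: it is injective because $\ker\theta = \Gamma(C)$ and $\mathfrak{X}_C \cap \Gamma(C) = 0$ by the direct sum decomposition, and surjective because, given $\lambda \in \Gamma(L)$, one lifts $\lambda$ to some $X \in \mathfrak{X}(M)$ with $\theta(X) = \lambda$ and then subtracts its $\Gamma(C)$-component (which lies in $\ker\theta$) to land in $\mathfrak{X}_C$ without changing $\theta(X)$. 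This gives the well-defined $X_\lambda$. Since $\mathfrak{X}_C$ is a Lie subalgebra of $\mathfrak{X}(M)$ (it is the kernel of the Lie-algebra-theoretic obstruction $\phi$, closed under bracket because $[[X,Y],\Gamma(C)] \subset [X,[Y,\Gamma(C)]] + [Y,[X,\Gamma(C)]] \subset \Gamma(C)$ by Jacobi), the formula $\{\lambda,\mu\} := \theta([X_\lambda, X_\mu])$ automatically defines a Lie bracket on $\Gamma(L)$, and $X_{\{\lambda,\mu\}} = [X_\lambda,X_\mu]$ holds by construction.

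The substantive point that remains is that $\{-,-\}$ is a \emph{first order differential operator in each entry}, i.e.\ that $(L,\{-,-\})$ is an abstract Jacobi structure in the sense of Definition \ref{definition:abstractj}. For this I would compute $\{\lambda, f\mu\}$ for $f \in C^\infty(M)$. The key sub-claim is the Leibniz-type identity $X_{f\mu} = f X_\mu + (\text{correction in } \Gamma(C))$; more precisely, I expect $X_{f\mu} - fX_\mu \in \Gamma(C)$ because $\theta(fX_\mu) = f\theta(X_\mu) = f\mu = \theta(X_{f\mu})$, so the difference lies in $\ker\theta = \Gamma(C)$. Then
\[
\{\lambda, f\mu\} = \theta([X_\lambda, X_{f\mu}]) = \theta([X_\lambda, fX_\mu]) + \theta([X_\lambda, X_{f\mu} - fX_\mu]).
\]
The first term expands as $\theta(f[X_\lambda,X_\mu] + X_\lambda(f)X_\mu) = f\{\lambda,\mu\} + X_\lambda(f)\mu$. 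For the second term, writing $Z := X_{f\mu} - fX_\mu \in \Gamma(C)$, one has $\theta([X_\lambda, Z]) = \phi_{X_\lambda}(Z) = 0$ since $X_\lambda$ is a contact vector field, i.e.\ $[X_\lambda, \Gamma(C)] \subset \Gamma(C) = \ker\theta$. Hence $\{\lambda, f\mu\} = f\{\lambda,\mu\} + X_\lambda(f)\mu$, which exhibits $\{\lambda, -\}$ as a first order differential operator with scalar symbol $X_\lambda$; skew-symmetry then gives first-orderness in the other entry as well. This simultaneously proves the last sentence of the statement, that the symbol of $\Delta_\lambda = \{\lambda,-\}$ is $X_\lambda$.

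I would close by noting uniqueness: any bracket with $X_{\{\lambda,\mu\}} = [X_\lambda,X_\mu]$ is forced, since $\lambda \mapsto X_\lambda$ is injective (apply $\theta$), so $\{\lambda,\mu\}$ is determined as $\theta([X_\lambda,X_\mu])$. The main obstacle, modest as it is, is organizing the bookkeeping around the splitting so that the two facts ``$X_{f\mu} - fX_\mu \in \Gamma(C)$'' and ``$[\mathfrak{X}_C, \Gamma(C)] \subset \Gamma(C)$'' are cleanly available; everything else is a short computation using the Jacobi identity for vector fields and the Leibniz rule for $[-,-]$ on $\mathfrak{X}(M)$.
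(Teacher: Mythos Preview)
Your proof is correct and follows exactly the approach the paper sets up: the paper states this proposition without an explicit proof, having already defined $X_\lambda$ (just before the proposition) as the unique contact vector field with $\theta(X_\lambda)=\lambda$, which is precisely the inverse of the bijection $\mathfrak X_C \to \Gamma(L)$ you construct from the splitting of Proposition~\ref{prop:contact_splitting}. Your verification of the first-order property via $X_{f\mu}-fX_\mu\in\Gamma(C)$ and $[\mathfrak X_C,\Gamma(C)]\subset\Gamma(C)$ is the standard argument and supplies the details the paper leaves to the reader (or to the references~\cite{CS2013,OW2013}).
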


Now, let $(M,C)$ be a contact manifold, and let $S \subset M$ be a submanifold.
The intersection $C_S := C\cap TS$ is a generically singular distribution on $S$. More precisely $S$ is the union of two disjoint subsets $S_0, S_1$ defined by
\begin{itemize}
	\item $p \in S_0$ if and only if $\dim(C_S)_p=\dim S$,
	\item $p \in S_1$ if and only if $\dim(C_S)_p=\dim S-1$.
\end{itemize}
If $S = S_0$ then $S$ is said to be an \emph{isotropic submanifold} of $(M,C)$. In other words, an isotropic submanifold of $(M,C)$ is an integral manifold of the contact distribution $C$. Locally maximal isotropic, or, equivalently, locally maximal integral submanifolds of $C$ are \emph{Legendrian submanifolds}.

\begin{proposition}
	\label{prop:coisotropics_contact_setting}
	Let $S = S_1$. The following conditions are equivalent:
	\begin{enumerate}
		\item $C_S$ is a pre-contact structure on $S$, with characteristic distribution given by $(C_S)^{\bot_\omega} \subset C|_S$,
		\item $(C_S)_p$ is a coisotropic subspace in the symplectic vector space $(C_p,\omega_p)$, i.e.~$(C_S)_p^{\bot_\omega} \subset (C_S)_p$, for all $p\in S$,
		\item $S$ is a coisotropic submanifold of the associated Jacobi manifold\linebreak $(M,L,J \equiv \{-,-\})$.
	\end{enumerate}
\end{proposition}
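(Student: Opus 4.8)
The plan is to establish the cycle of implications $(1)\Rightarrow(2)\Rightarrow(3)\Rightarrow(1)$ (or some convenient re-routing of it), working with the pointwise linear-algebra statement $(2)$ as the central hub, since it is the most concrete of the three. The key observation is that for $S = S_1$ the distribution $C_S = C\cap TS$ has codimension one in $TS$, and there is a natural identification relating the annihilators: for $p\in S$, the conormal space $T^0_pS \subset T^\ast_pM$ sits inside $C^\ast_p\otimes L_p$ after applying the curvature machinery, and one must carefully track how $\Lambda_J^\#$ on $T^\ast M\otimes L$ restricts to $\omega^\#$ on $C^\ast\otimes L$. Concretely, I would first record (probably as a preliminary remark) the relation between the Jacobi bi-symbol $\Lambda_J$ of the contact Jacobi structure of Proposition \ref{prop:Jacobi_from_contact} and the curvature form $\omega$ of $C$: on covectors that annihilate the Reeb-type direction (equivalently, that lie in $C^\ast\otimes L$ under the splitting $\mathfrak X(M)=\mathfrak X_C\oplus\Gamma(C)$), $\Lambda_J^\#$ agrees with $\omega^\#$ composed with the inclusion $C\hookrightarrow TM$, while a transverse covector is sent to (a multiple of) a contact vector field transverse to $C$.

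For $(3)\Leftrightarrow(2)$: by definition $S$ is coisotropic in $(M,L,J)$ iff $\Lambda_J^\#(T^0S\otimes L)\subset TS$. Decompose $T^0_pS\otimes L_p$ according to whether a covector kills a chosen transverse-to-$C$ direction or not. The part lying in $C^\ast\otimes L$ is mapped by $\Lambda_J^\#=\omega^\#$ into $C$, and the condition that this lands in $TS$ is exactly $(C_S)^{\perp_\omega}\subset C_S$, i.e.~$(2)$; the remaining one-dimensional transverse part is handled by the short exact sequence (\ref{eq:contact_splitting_short_exact_sequence}) and the fact that a contact vector field transverse to $S$ can always be absorbed, so it imposes no extra condition. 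Here one uses that $S=S_1$, which guarantees $C_S$ has the right codimension and that $T^0S$ meets the ``contact'' covectors in the conormal of $C_S$ inside $C^\ast$. For $(1)\Leftrightarrow(2)$: by Definition \ref{mydef:pre-contact}, $C_S$ is a pre-contact structure iff its characteristic distribution $\ker(\omega|_{C_S})^\flat$ has constant dimension; since $\ker(\omega|_{C_S})^\flat = (C_S)^{\perp_\omega}\cap C_S$, condition $(2)$ says this equals $(C_S)^{\perp_\omega}$, whose dimension is locally constant (being the kernel of $\omega^\flat$ restricted, of constant rank because $\omega$ is symplectic on $C$ and $C_S$ has constant rank on $S=S_1$). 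Conversely constant dimension of $(C_S)^{\perp_\omega}\cap C_S$ forces, by a dimension count in the symplectic vector space $(C_p,\omega_p)$ using $\dim(C_S)_p^{\perp_\omega} = \dim C_p - \dim(C_S)_p$, that $(C_S)^{\perp_\omega}\subset C_S$ everywhere, giving $(2)$ and simultaneously the stated description of the characteristic distribution.

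I expect the main obstacle to be the bookkeeping in the first step: pinning down precisely, in intrinsic (line-bundle-valued) language, how $\Lambda_J^\#$ decomposes with respect to the splitting $\mathfrak X(M) = \mathfrak X_C \oplus \Gamma(C)$ and the dual splitting of $T^\ast M\otimes L$, and checking that the ``transverse'' covector direction genuinely contributes nothing to the coisotropy condition. Once that dictionary between $\Lambda_J^\#$ and $\omega^\#$ is in place, the three equivalences reduce to elementary symplectic linear algebra performed fiberwise, with the hypothesis $S = S_1$ ensuring all the relevant ranks are locally constant so that the pointwise statements assemble into statements about distributions and foliations. One could alternatively shortcut part of this by invoking Corollary \ref{cor:char}.(3) together with Proposition \ref{lem:cois}, reducing to the transitive (here, contact) leaf, but the direct fiberwise argument via the curvature form seems cleaner and yields the extra information in $(1)$ about the characteristic distribution for free.
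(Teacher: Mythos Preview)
Your approach for $(2) \Leftrightarrow (3)$ is essentially the paper's, though the paper is more concrete: it records directly that $Y_{f,\lambda} := \Lambda_J^\#(df \otimes \lambda) = X_{f\lambda} - fX_\lambda$ lies in $\Gamma(C)$ and satisfies $\omega(Y_{f,\lambda}, X) = X(f)\lambda$ for all $X \in \Gamma(C)$, which packages your ``$\Lambda_J^\# = \omega^\#$ on the relevant subspace'' observation without invoking any splitting. Your concern about a ``one-dimensional transverse part'' of $T^0_pS$ is misplaced: since $S = S_1$ we have $T_pS + C_p = T_pM$, hence $T^0_pS \cap T^0_pC = 0$, so the restriction $T^0_pS \to C_p^\ast$ is already injective with image the annihilator of $(C_S)_p$ in $C_p^\ast$, and there is no extra covector direction to absorb.

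For $(1) \Leftrightarrow (2)$ you have a genuine gap. Your converse reads ``constant dimension of $(C_S)^{\perp_\omega} \cap C_S$ forces, by a dimension count, that $(C_S)^{\perp_\omega} \subset C_S$'', but this is false: take $C_S$ a symplectic subbundle of $(C,\omega)$, where the intersection is identically zero yet the containment fails. What you have overlooked is that statement (1) is not merely ``$C_S$ is pre-contact'': it says the characteristic distribution \emph{equals} $(C_S)^{\perp_\omega}$. Since the characteristic distribution of $C_S$ is in any case $(C_S)^{\perp_\omega} \cap C_S$, asserting equality with $(C_S)^{\perp_\omega}$ is exactly asserting $(C_S)^{\perp_\omega} \subset C_S$, i.e.~(2). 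Your direction $(2) \Rightarrow (1)$ is fine. This tautological unpacking is all the paper means by ``a standard argument in symplectic linear algebra''.
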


\begin{proof}
	The equivalence $1)\Longleftrightarrow 2)$ amounts to a standard argument in symplectic linear algebra. The equivalence $2)\Longleftrightarrow 3)$ is based on the following facts. Let $(L, J \equiv \{-,-\})$ be the Jacobi structure associated to $(M,C)$. For $\lambda\in\Gamma(L)$, and $f\in C^\infty(M)$ put $Y_{f,\lambda}:=\Lambda_J^\#(df \otimes \lambda) =X_{f\lambda}-fX_\lambda$. We have the following:
	\begin{itemize}
		\item $Y_{f, \lambda} \in \Gamma(C)$.
		
		\item Let $I(S) \subset C^\infty (M)$ be the ideal of functions vanishing on $S$. Then
		$
		Y_{f,\lambda}$ is tangent to $S$ if and only if $X_{f\lambda}$ is tangent to $S$,
		for all $f\in I(S)$, and $\lambda\in\Gamma(L)$.
		
		\item
		$
		\omega(Y_{f,\lambda},X)=X(f)\lambda,
		$
		for all $f\in\ C^\infty(M)$, $\lambda\in\Gamma(L)$, and $X\in\Gamma(C)$.

	\end{itemize}
	Now it is easy to see that $(C_S)^{\bot_\omega}\subset C_S$ if and only if $S$ is coisotropic in $(M,L,\{-,-\})$.
\end{proof}

\begin{definition}
	If the equivalent conditions $1)$--$3)$ in Proposition \ref{prop:coisotropics_contact_setting} are satisfied, then $S$ is said to be a \emph{regular coisotropic submanifold} of $(M,C)$.
\end{definition}

\begin{remark}
	Unlike the equivalence $1)\!\Longleftrightarrow\! 2)$, in Proposition \ref{prop:coisotropics_contact_setting}, the equivalence $2)\!\Longleftrightarrow\! 3)$ continues to hold also without assuming that $S \!=\! S_1$.
\end{remark}

\begin{remark}
	Let $(M, L , \{-,-\})$ be a Jacobi manifold. Then $(L,\{-,-\})$ is the Jacobi structure induced by a (necessarily unique) contact structure if and only if the associated bi-linear form $J : \wedge^2 J^1 L \rightarrow L$ is non-degenerate (see \cite{V2015}). In particular, Hamiltonian derivations of a contact manifold, exhaust all infinitesimal Jacobi automorphisms, and Hamiltonian vector fields exhaust all Jacobi vector fields.
\end{remark}

\subsection{Coisotropic embeddings and $L_\infty$-algebras from pre-contact manifolds}
\label{sec:tubular_neighborhood} From now till the end of this section we consider only closed regular co\-iso\-tro\-pic submanifolds.
The intrinsic pre-contact geometry of a regular coisotropic submanifold $S$ in a contact manifold $M$ contains a full information about the coisotropic embedding of $S$ into $M$, at least locally around $S$. This is an immediate consequence of the \emph{Tubular Neighborhood Theorem} in contact geometry (see \cite{L1998}, \cite[Section 5]{OW2013},
see also \cite{G1982} for the analogous result in symplectic geometry).

Let $(S,C_S)$ be a pre-contact manifold, with characteristic foliation $\Ff$.

\begin{definition}
	\label{mydef:coisotropic_embedding}
	A \emph{coisotropic embedding} of $(S,C_S)$ into a contact manifold $(M,C)$ is an embedding $i: S \INTO M$ such that
	$
	(di) C_S = C_{i(S)}$, and $(di)T\Ff=C_{i(S)}^{\bot_{\omega}}$,
	where $\omega $ is the curvature form of $(M,C)$.
\end{definition}

\begin{remark}
	Clearly, in view of Proposition~\ref{prop:coisotropics_contact_setting}, if $i : S \INTO M$ is a co\-iso\-tro\-pic embedding of $(S,C_S)$ into $(M,C)$, then $i(S)$ is a coisotropic submanifold of $(M,C)$.
\end{remark}

Let $i_1$ and $i_2$ be coisotropic embeddings of $(S,C_S)$ into contact manifolds $(M_1,C_1)$ and $(M_2,C_2)$, respectively.
\begin{definition}
	\label{mydef:local_equivalence_coisotropic_embeddings}
	The coisotropic embeddings $i_1$ and $i_2$ are said to be \emph{locally equivalent} if there exist open neighborhoods $U_j$ of $i_j(S)$ in $M_j$, $j=1,2$, and a contactomorphism $\phi:(U_1,C_1|_{U_1})\to(U_2,C_2|_{U_2})$ such that $\phi\circ i_1=i_2$.
\end{definition}

\begin{theorem}
	[Coisotropic embedding of pre-contact manifolds: existence and uniqueness]
	\label{teor:coisotropic_embedding}
	Every pre-contact manifold admits a co\-iso\-tro\-pic embedding. Additionally, any two coisotropic embeddings of a given pre-contact manifold are locally equivalent.
\end{theorem}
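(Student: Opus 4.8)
The plan is to reduce the statement to the classical contact neighborhood theorem by first building a model contact structure out of the intrinsic data of a pre-contact manifold $(S,C_S)$, and then invoking a Moser-type argument for uniqueness. For existence, I would proceed as follows. Given $(S,C_S)$ with structure line bundle $\ell = TS/C_S$, curvature form $\omega$ and characteristic distribution $K := (C_S)^{\perp_\omega} = \ker\omega^\flat$, I would choose a complement so that $C_S = K \oplus D$ with $\omega|_D$ nondegenerate. The ``missing'' conormal directions should be supplied by $K^\ast \otimes \ell$: set $E := K^\ast\otimes\ell \to S$ and take $M := E$ (a tubular model), with $\underline i : S \INTO M$ the zero section. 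One then writes down an explicit $\ell$-valued one-form $\widetilde\theta$ on (a neighborhood of the zero section in) $E$ whose restriction to $S$ recovers $\theta_S : TS \to \ell$ and whose fiber derivative along $E$ is the tautological pairing $K \otimes (K^\ast\otimes\ell) \to \ell$; concretely, if $p$ denotes the fiber coordinate (a section of $K^\ast\otimes\ell$ along the fibers), then $\widetilde\theta := \pi^\ast\theta_S + \langle p, -\rangle$ on the vertical-$K$ directions, suitably interpreted as an $\ell$-valued form via the pullback line bundle $\pi^\ast\ell$. A direct computation of $d\widetilde\theta$ on $S$ shows that the curvature form of $C := \ker\widetilde\theta$ is nondegenerate along the zero section: the old degenerate directions $K$ are now paired nondegenerately with the new fiber directions, while $\omega|_D$ is already nondegenerate. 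By openness of nondegeneracy, $C$ is a genuine contact structure on a neighborhood $U$ of $S$ in $E$, and by construction $\underline i$ is a coisotropic embedding with $(d\underline i)T\Ff = C|_S^{\perp_\omega}$ as required by Definition \ref{mydef:coisotropic_embedding}.

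For uniqueness, suppose $i_1 : S \INTO (M_1,C_1)$ and $i_2 : S \INTO (M_2,C_2)$ are two coisotropic embeddings of $(S,C_S)$. First I would invoke the ordinary tubular neighborhood theorem to identify neighborhoods of $i_1(S)$ and $i_2(S)$ with neighborhoods of the zero section in the normal bundles $N_1 S$, $N_2 S$; the coisotropic embedding conditions force both normal bundles to be canonically isomorphic to $K^\ast\otimes\ell$ (the normal directions are detected by $\omega$ applied to $C|_S$, which annihilates $K$ and is nondegenerate on a complement, so the normal bundle is identified with $(C|_S / C_S)^\ast \otimes \ell \cong K^\ast \otimes \ell$). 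Pulling back, we may assume $M_1 = M_2 = E := K^\ast\otimes\ell$ near the zero section $S$, carrying contact structures $C_1 = \ker\theta_1$ and $C_2 = \ker\theta_2$ that agree with $\theta_S$ on $TS$ and have the same fiber derivative along $E$ (again by the embedding condition on $T\Ff$). Thus $\theta_1$ and $\theta_2$ are two $\pi^\ast\ell$-valued contact forms agreeing to first order along $S$. The hard part is then a Moser/Gray-stability argument in the line-bundle-valued setting: I would connect them by $\theta_t := (1-t)\theta_1 + t\theta_2$, check that each $\theta_t$ is still contact in a neighborhood of $S$ (this uses that the $\theta_t$ all share the same $1$-jet along $S$, so their curvature forms coincide on $S$ and stay nondegenerate nearby), and solve $\mathcal L_{Z_t}\theta_t = -\dot\theta_t = \theta_1 - \theta_2$ for a time-dependent vector field $Z_t$ using the nondegeneracy of the curvature form together with Proposition \ref{prop:contact_splitting} (the decomposition $\mathfrak X(M) = \mathfrak X_C \oplus \Gamma(C)$ is precisely what lets us invert the relevant operator). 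Since $\theta_1 - \theta_2$ vanishes to second order along $S$, $Z_t$ vanishes along $S$, so its flow $\phi_t$ fixes $S$ pointwise and is defined near $S$ for all $t \in [0,1]$; then $\phi_1$ is the desired contactomorphism with $\phi_1 \circ i_1 = i_2$.

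I expect the main obstacle to be the careful bookkeeping in the $\ell$-valued Cartan calculus: the curvature form $\omega$ takes values in a nontrivial line bundle, so ``$\mathcal L_{Z_t}\theta_t$'' must be interpreted via the derivation of $\pi^\ast\ell$ lifting $Z_t$ (the Atiyah-algebroid Lie derivative from Appendix \ref{sec:app_0}), and one has to verify that the homotopy formula and the vanishing-order estimates go through verbatim. The algebraic inversion step — solving for $Z_t$ — is where nondegeneracy of the curvature form and the splitting of Proposition \ref{prop:contact_splitting} do the real work; once that operator is identified and shown invertible near $S$, the rest is the standard Moser trick. A secondary technical point is ensuring all constructions can be carried out on a single common neighborhood of $S$ (shrinking as needed), which is routine given that $S$ is closed.
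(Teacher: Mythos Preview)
Your existence construction \emph{is} the paper's contact thickening (Proposition~\ref{prop:contact_thickening}): the total space $K^\ast\otimes\ell$ coincides with $T_\ell^\ast\Ff = T^\ast\Ff\otimes\ell$, and your form $\pi^\ast\theta_S + \langle p,-\rangle$ is the paper's $\tau^\ast\theta + \theta_G$, where $\theta_G$ is precisely the tautological pairing composed with a projection $p_{T\Ff;G}$. One small imprecision: to define that pairing on all tangent vectors (not just vertical-plus-$K$) you need a complement $G$ to $T\Ff$ in the full $TS$, not merely a complement $D$ to $K$ inside $C_S$.

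For uniqueness the paper gives no argument at all --- it simply cites Loose \cite{L1998} --- so your Moser outline goes further than the text and follows the standard route. Two remarks: (i) the normal bundle is $NS \cong C|_S/C_S$ directly, and the nondegenerate pairing $\omega: K \times (C|_S/C_S)\to\ell$ gives $NS \cong K^\ast\otimes\ell$, so your intermediate expression $(C|_S/C_S)^\ast\otimes\ell$ has a stray dual-and-twist; (ii) the claim that $\theta_1,\theta_2$ agree to first order along $S$ is not automatic from the embedding conditions alone --- it holds \emph{provided} you choose the two normal-bundle identifications $NS_i \cong K^\ast\otimes\ell$ via the respective $\omega_i^\flat$, so that the off-$S$ linearizations both become the tautological pairing. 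With that choice made, your line-bundle-valued Moser argument goes through as you describe.
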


Theorem \ref{teor:coisotropic_embedding} is a special case of Theorem 3 in \cite{L1998}. We do not repeat the ``uniqueness part'' of the proof here. The ``existence part'' can be proved constructively via \emph{contact thickening}. This is done for later purposes in the next subsection.

\begin{corollary}[$L_\infty$-algebra of a pre-contact manifold] \label{cor:L_infty_precont}
	Every pre-contact manifold determines a natural isomorphism class of $L_\infty$-algebras.
\end{corollary}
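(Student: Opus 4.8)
The plan is to deduce Corollary \ref{cor:L_infty_precont} by combining the coisotropic embedding existence/uniqueness statement (Theorem \ref{teor:coisotropic_embedding}) with the machinery already set up in Section \ref{sec:coiso}, namely Proposition \ref{prop:linfty}, which attaches an $L_\infty$-algebra to a coisotropic submanifold of an abstract Jacobi manifold once a fat tubular neighborhood is chosen, and Proposition \ref{prop:gauge_invariance}, which asserts that the $L_\infty$-isomorphism class of this algebra is independent of that choice. So first I would fix a pre-contact manifold $(S, C_S)$. By the ``existence part'' of Theorem \ref{teor:coisotropic_embedding}, there is a coisotropic embedding $i : S \INTO M$ into some contact manifold $(M,C)$. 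Via Proposition \ref{prop:Jacobi_from_contact}, $C$ induces a canonical abstract Jacobi structure $(L, \{-,-\})$ on $M$ with $L = TM/C$, and by Proposition \ref{prop:coisotropics_contact_setting} (equivalence $(1)\Leftrightarrow(3)$) the submanifold $i(S)$ is coisotropic with respect to this Jacobi structure. Hence Proposition \ref{prop:linfty} applies and produces, after a choice of fat tubular neighborhood, an $L_\infty$-algebra structure on $\Gamma(\wedge^\bullet N_\ell S \otimes \ell)[1]$; by Proposition \ref{prop:gauge_invariance} its $L_\infty$-isomorphism class depends only on the coisotropic embedding $i$, not on the tubular data.

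The remaining point is that the $L_\infty$-isomorphism class does not depend on the choice of coisotropic embedding $i$ either. Here I would invoke the ``uniqueness part'' of Theorem \ref{teor:coisotropic_embedding}: given two coisotropic embeddings $i_1 : S \INTO (M_1, C_1)$ and $i_2 : S \INTO (M_2, C_2)$, there are open neighborhoods $U_j$ of $\im i_j$ and a contactomorphism $\phi : (U_1, C_1) \to (U_2, C_2)$ with $\phi \circ i_1 = i_2$. A contactomorphism is, in particular, an isomorphism of the induced abstract Jacobi manifolds (it intertwines the Jacobi brackets because it intertwines the curvature forms and the contact distributions, hence the Hamiltonian vector fields and the brackets defined via $X_{\{\lambda,\mu\}} = [X_\lambda, X_\mu]$). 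Since the $L_\infty$-algebra of Proposition \ref{prop:linfty} is built entirely out of the Jacobi bracket, the normal bundle, and a fat tubular neighborhood, transporting all this data along $\phi$ identifies the $V$-data for $i_1(S) \subset M_1$ with the $V$-data for $i_2(S) \subset M_2$ (after also transporting the tubular neighborhood, which is legitimate by Proposition \ref{prop:gauge_invariance}). Therefore the two $L_\infty$-algebras are $L_\infty$-isomorphic, and the isomorphism class is a well-defined invariant of $(S, C_S)$ alone.

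Concretely I would phrase the proof as follows. Given $(S, C_S)$, pick a coisotropic embedding $i : S \INTO (M,C)$ (Theorem \ref{teor:coisotropic_embedding}), pass to the induced Jacobi manifold $(M, TM/C, \{-,-\})$, note $i(S)$ is coisotropic (Proposition \ref{prop:coisotropics_contact_setting}), and apply Proposition \ref{prop:linfty} together with Proposition \ref{prop:gauge_invariance} to obtain a well-defined $L_\infty$-isomorphism class $\mathcal L_i$. For a second embedding $i'$, the contactomorphism $\phi$ from the uniqueness part of Theorem \ref{teor:coisotropic_embedding} is a Jacobi isomorphism near $\im i$, it pushes forward a fat tubular neighborhood of $\ell$ in $L_1$ adapted to $i$ to one adapted to $i'$, and pushes forward the whole $V$-data package $((\Der^\bullet L_{NS})[1], \im I, P, J)$ of $i$ to that of $i'$; a pushforward of $V$-data along an isomorphism induces a strict $L_\infty$-isomorphism of the associated Voronov algebras. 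Hence $\mathcal L_i = \mathcal L_{i'}$, and the common class is the asserted natural invariant of the pre-contact manifold.

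The main obstacle I anticipate is making precise the claim that a contactomorphism defined only on a neighborhood $U_1$ of $\im i_1$ yields an honest identification of the $L_\infty$-data, since the $L_\infty$-algebra of Proposition \ref{prop:linfty} is built on the total space $NS$ of a normal bundle rather than on an open subset of $M$; one must check that the pushforward construction is compatible with the fat-tubular-neighborhood identifications on both sides. This is exactly the kind of bookkeeping that the formal-neighborhood reformulation in the proof of Proposition \ref{prop:gauge_invariance} was designed to streamline, and in the contact case the remark following that proof notes that the tubular neighborhood theorem for coisotropic submanifolds removes the need for formal neighborhoods altogether; so in the end the obstacle is notational rather than substantive, and amounts to tracking how $\phi$, the tubular embeddings $\underline\tau_j$, and the bundle maps $\tau_j$ fit into one commuting diagram.
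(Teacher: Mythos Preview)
Your proposal is correct and follows essentially the same approach as the paper: use the existence part of Theorem~\ref{teor:coisotropic_embedding} together with Proposition~\ref{prop:linfty} to obtain an $L_\infty$-algebra, then invoke Proposition~\ref{prop:gauge_invariance} and the uniqueness part of Theorem~\ref{teor:coisotropic_embedding} to show the isomorphism class is independent of all choices. Your version is more explicit about why a contactomorphism induces an isomorphism of the underlying Jacobi data and hence of the $V$-data, which the paper's proof leaves implicit.
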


\begin{proof}
	The ``existence part'' of Theorem~\ref{teor:coisotropic_embedding} and Proposition \ref{prop:linfty} guarantee that a pre-contact manifold $(S,C_S)$ determines a $L_\infty$-algebra up to the choice of a coisotropic embedding $(S,C_S)\subset (M,C)$, and a fat tubular neighborhood $\tau : NS \times_S \ell \INTO L$ of $\ell$ in $L$, where $\ell = TS / C_S$ and $L$ is the Jacobi bundle of $(M,C)$. Any two such $L_\infty$-algebras are $L_\infty$-isomorphic because of Proposition~\ref{prop:gauge_invariance} and the ``uniqueness part'' of Theorem~\ref{teor:coisotropic_embedding}.
\end{proof}

\subsection{Contact thickening}
\label{sec:contact_thickening}
We now show that every pre-contact manifold $(S,C_S)$ admits a coisotropic embedding into a suitable contact manifold uniquely determined by $(S,C_S)$ up to the choice of a complementary distribution to the characteristic distribution. Thus, let $(S,C_S)$ be a pre-contact manifold, $\Ff $ its characteristic foliation, $\ell = TS/C_S$ the quotient line bundle, and let $\theta : TS \rightarrow \ell$ be the structure form. Theorem \ref{teor:coisotropic_embedding} is a ``contact version'' of a theorem by Gotay \cite{G1982} and can be proved by a similar technique as the \emph{symplectic thickening} of \cite{OP2005}. Accordingly, we will speak about \emph{contact thickening}.
See also \cite{OW2013} for a relevant discussion on contact thickenings in a different context.

Pick a distribution $G$ on $S$ complementary to $T\Ff $, and let $p_{T\Ff;G} : TS \to T\Ff $ be the projection determined by the splitting $TS = G \oplus T\Ff $.
Put\linebreak $T_\ell {}^\ast \Ff := T^\ast\Ff \otimes \ell$, and let $q : T_\ell {}^\ast\Ff \to S$ be the natural projection. We equip the manifold $T_\ell {}^\ast\Ff$
with the line bundle $L := q^\ast \ell$.
The $\ell$-valued $1$-form $\theta$ can be pulled-back via $q$ to an $L$-valued $1$-form $q^\ast \theta$ on $T_\ell {}^\ast\Ff $.
There is also another $L$-valued $1$-form $\theta_G$ on $T_\ell {}^\ast\Ff$.
It is defined as follows: for $\alpha \in T_\ell {}^\ast\Ff $, and $\xi \in T_\alpha(T_\ell {}^\ast\Ff )$
\[
(\theta_G)_\alpha (\xi) := (\alpha \circ p_{T\Ff;G} \circ d q)(\xi) \in \ell_x = L_\alpha, \quad x := q (\alpha),
\]
where $\alpha$ is interpreted as a linear map $T_x\Ff \to\ell_x$. By definition, $\theta_G$ depends on the choice of the splitting $G$.

\begin{proposition}
	\label{prop:contact_thickening}
	The distribution $C := \ker (\theta_G + q^\ast \theta )$ is a contact structure on a neighborhood $U$ of $\operatorname{im}\mathbf{0}$, the image of the zero section $\mathbf{0}$ of $q$.
	Additionally $\mathbf{0}$ is a coisotropic embedding of $(S,C_S)$ into the contact manifold $(U,C|_U)$.
\end{proposition}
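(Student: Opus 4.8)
The plan is to verify the two assertions --- that $C := \ker(\theta_G + \tau^\ast\theta)$ is a contact structure near $\operatorname{im}\mathbf 0$, and that $\mathbf 0$ is a coisotropic embedding of $(S,C_S)$ --- by a local coordinate computation, exactly in the spirit of the symplectic thickening of Oh--Park \cite{OP2005}. First I would set up adapted coordinates on a neighbourhood of a point of $S$ in $E = T_\ell{}^\ast\Ff$: pick foliated coordinates $(x^i, u^r)$ on $S$ where the $u^r$ are coordinates along the characteristic leaves (so $T\Ff = \langle \partial_{u^r}\rangle$) and the $x^i$ are ``transverse'' coordinates adapted to the complementary distribution $G = \langle \partial_{x^i}\rangle$; choose a local generator $\sigma$ of $\ell$, so that $\theta = \theta_0\, \sigma$ for a scalar one-form $\theta_0$ on $S$; and introduce linear fibre coordinates $p_r$ on $T_\ell{}^\ast\Ff$ dual to $du^r \otimes \sigma$. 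Pulling back via $\tau$ and using $L = \tau^\ast\ell$ with generator $\tau^\ast\sigma$, the one-form $\theta_G + \tau^\ast\theta$ becomes $\big(\sum_r p_r\, du^r + \tau^\ast\theta_0\big)\otimes \tau^\ast\sigma$, since by construction $\theta_G$ evaluated on $\xi$ is $\alpha$ applied to the $T\Ff$-component of $d\tau(\xi)$, which in these coordinates reads $\sum_r p_r\, du^r$.

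Next I would check the contact (maximal non-integrability) condition. Writing $\alpha_0 := \sum_r p_r\, du^r + \tau^\ast\theta_0$ for the scalar local contact form (after trivializing $L$), one computes $d\alpha_0 = \sum_r dp_r \wedge du^r + \tau^\ast d\theta_0$. The key point is that along $\operatorname{im}\mathbf 0$, where all $p_r = 0$, the defining property of $\Ff$ as the characteristic foliation of $C_S$ says precisely that $d\theta_0$ is nondegenerate on $G = C_S \cap \ker$ (the ``transverse'' directions) --- more precisely, $T\Ff = (C_S)^{\perp_\omega}$ means the curvature form is nondegenerate exactly on a complement of $T\Ff$ inside $C_S$. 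Combining the symplectic pairing of $dp_r$ with $du^r$ (nondegenerate on the fibre directions paired with the $\partial_{u^r}$) with the nondegeneracy of $d\theta_0$ on $G$, one gets that $\alpha_0 \wedge (d\alpha_0)^{\wedge n}$ is nonvanishing along $\operatorname{im}\mathbf 0$, hence on an open neighbourhood $U$ by continuity. This is the step I expect to require the most care: one has to correctly bookkeep dimensions ($\dim S = \dim G + \dim\Ff$, $\dim E = \dim S + \dim\Ff$) and to see that the ``missing'' contact directions supplied by the cotangent fibres of $T^\ast\Ff$ exactly compensate for the degeneracy of $\omega$ along $\Ff$ --- this is the whole reason one thickens by $T_\ell{}^\ast\Ff$ rather than by something smaller.

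Finally I would verify that $\mathbf 0 : S \INTO (U, C|_U)$ is a coisotropic embedding in the sense of Definition \ref{mydef:coisotropic_embedding}, i.e.\ that $(d\mathbf 0)C_S = C|_{\mathbf 0(S)}$ and $(d\mathbf 0)T\Ff = C|_{\mathbf 0(S)}^{\perp_\omega}$. Restricting $\alpha_0$ to $\operatorname{im}\mathbf 0$ gives $\mathbf 0^\ast\alpha_0 = \theta_0$ (the $p_r\,du^r$ term vanishes on the zero section), whence $T(\operatorname{im}\mathbf 0) \cap C = \ker\theta_0 = C_S$, giving the first identity; this also exhibits $\operatorname{im}\mathbf 0$ as a submanifold of $S_1$-type, so Proposition \ref{prop:coisotropics_contact_setting} applies. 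For the second identity I would compute the curvature form $\omega$ of $(U,C)$ restricted along $\operatorname{im}\mathbf 0$ from $d\alpha_0$: a vector $v \in C_S = \ker\theta_0$ lies in $C|_{\mathbf 0(S)}^{\perp_\omega}$ iff $d\alpha_0(v, w) = 0$ for all $w \in C|_{\mathbf 0(S)}$, and evaluating $d\alpha_0 = \sum_r dp_r\wedge du^r + \tau^\ast d\theta_0$ along the zero section against the normal (fibre) directions $\partial_{p_r}$ forces $v$ to annihilate all $du^r$, i.e.\ $v \in T\Ff$; conversely $T\Ff$ pairs trivially under $\tau^\ast d\theta_0$ and under $\sum dp_r\wedge du^r$ with all of $C|_{\mathbf 0(S)}$ once one uses that $C|_{\mathbf 0(S)}$ contains no $\partial_{p_r}$ component tangent to the zero section in the relevant way --- this is where the choice of $G$ enters and pins down the answer. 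Closing the argument, $\mathbf 0$ satisfies both defining conditions, so it is a coisotropic embedding, completing the proof.
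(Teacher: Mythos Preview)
Your overall approach---trivialize $\ell$ locally, work in fibred coordinates on $E=T_\ell{}^\ast\Ff$, and verify both claims by direct computation with the scalar form $\alpha_0$---is essentially the paper's strategy; the paper instead uses a local frame adapted simultaneously to $T\Ff$, $C_S$ and $G$, and reads everything off the representative matrix of the curvature of $C$ along the zero section. One oversimplification on your side: writing $G=\langle\partial_{x^i}\rangle$ forces $G$ to be involutive, which it need not be. In general $G$ is spanned by $\partial_{x^i}+G_i^r\partial_{u^r}$, and the correct local expression is $\theta_G = \sum_r p_r\,\eta^r$ with $\eta^r = du^r - G_i^r\,dx^i$ the one-forms annihilating $G$. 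These corrections do not, in the end, spoil the non-degeneracy check along $\mathbf 0(S)$ (they only change the basis in which the fibre directions pair with $T\Ff$), so your first argument survives after this fix; the paper handles this cleanly by working with frames rather than coordinates.

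The argument for $(d\mathbf 0)T\Ff = (C_S)^{\perp_\omega}$, however, contains a genuine error. You take $v\in C_S$ and pair against all $w\in C|_{\mathbf 0(S)}$, but the $\omega$-orthogonal of $C_S$ inside $C|_{\mathbf 0(S)}$ is computed the other way around: take $v\in C|_{\mathbf 0(S)}$ and test against $w\in C_S$. Concretely, from $d\alpha_0(v,\partial_{p_r}) = -du^r(v)$ you conclude ``$v$ annihilates all $du^r$, i.e.\ $v\in T\Ff$''---but $du^r(v)=0$ for all $r$ means $v$ has \emph{no} $\partial_{u^r}$-component, i.e.\ $v\in G$, the opposite of what you want. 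Your ``conversely'' also fails: $T\Ff$ pairs \emph{non-trivially} with the fibre directions, since $d\alpha_0(\partial_{u^s},\partial_{p_r}) = -\delta_{sr}$, and the fibre directions do lie in $C|_{\mathbf 0(S)}$. The correct computation runs as follows: for $v\in C|_{\mathbf 0(S)}$, testing against $w=\partial_{u^s}\in T\Ff\subset C_S$ gives $\omega(v,\partial_{u^s})$ equal to the $\partial_{p_s}$-coefficient of $v$ (using that $\partial_{u^s}\in\ker\omega_S$), so this kills the fibre component and forces $v\in C_S$; then testing against $w\in G\cap C_S$ and using non-degeneracy of $\omega_S$ on that subspace forces the $(G\cap C_S)$-component of $v$ to vanish, leaving $v\in T\Ff$. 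This is exactly what the paper's block curvature matrix (identity blocks pairing $T\Ff$ with the fibres, $\mathbb W$ on the $V$-block) encodes.
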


\begin{proof}
	Use Darboux lemma and choose local coordinates $(x^i,u^a,z)$ on $S$ \emph{adapted to $C_S$}, i.e.
	\begin{gather*}
		\Gamma(T \Ff)=\left\langle \partial/\partial x^i \right\rangle, \quad \Gamma(C_S)=\left\langle \partial/\partial x^i, \mathbb C_a\right\rangle, \quad \mathbb C_a = \frac{\partial}{\partial u^a} - C_a \frac{\partial}{\partial z},
	\end{gather*}
	where the $C_a$'s are \emph{linear functions} of the $u^b$'s only. The section $\mu:= \theta(\partial/ \partial z)$ is a local generator of $\Gamma (\ell)$. Moreover $\theta$ is locally given by
	$
	\theta = (dz - C_a du^a) \otimes \mu,
	$
	and the curvature form $\omega_S$ of $C_S$ is locally given by
	\[
	\omega_S = \frac{1}{2} \omega_{ab} du^a |_C \wedge du^b|_C \otimes \mu, \quad \omega_{ab} = \frac{\partial C_b}{\partial u^a} - \frac{\partial C_a}{\partial u^b}.
	\]
	In particular, the skew-symmetric matrix $(\omega_{ab})$ is non-degenerate.
	We will use the following local frame on $S$ adapted to both $C_S$ and $G$:
	\[
	\left(\frac{\partial}{\partial x^i}, \mathbb C'_a,Z \right),
	\]
	where
	$
	\mathbb C'_a := (\mathrm{id} - p_{T\Ff;G}) (\mathbb C_a)
	$,
	and
	$
	Z := (\mathrm{id} - p_{T\Ff;G})(\partial/\partial z)
	$.
	Now, let $\boldsymbol p = (p_i)$ be linear coordinates along the fibers of $q:T_\ell {}^\ast\Ff \to S$ corresponding to the local frame $(dx^i|_{T \mathcal F} \otimes \mu)$. Then $(\partial / \partial x^i,\mathbb C'_a, Z, \frac{\partial}{\partial p_i})$ is a local frame on $T_\ell {}^\ast\Ff$. It is easy to check that locally
	\[
	\Gamma(C) = \left\langle X_i, \mathbb C'_a,\frac{\partial}{\partial p_i}\right\rangle ,
	\]
	where $X_i :=\partial / \partial x^i - p_i Z$.
	Finally, the representative matrix of the curvature of $C$ with respect to the local frames $(X_i, \mathbb C'_a,\frac{\partial}{\partial p_i})$ of $C$ and $Z\, \mathrm{mod}\, C$ of $T (T_\ell {}^\ast \Ff)/C = L$ is
	\begin{equation}\label{eq:cont_thick}
		\left( \begin{array}{ccc}
			0 & 0 & \delta_{i}^j \\
			0 & \omega_{ab}  & 0 \\
			-\delta_{j}^i & 0 & 0
		\end{array} \right) \ \text{up to infinitesimals $O(\boldsymbol p)$}
	\end{equation}
	This shows that $C$ is maximally non-integrable around the zero section of $T_\ell {}^\ast\Ff $.
	Moreover, it immediately follows from (\ref{eq:cont_thick}) that the zero section of $T_\ell {}^\ast\Ff $ is a coisotropic embedding (transversal to fibers of $q$). This concludes the proof.
\end{proof}

The contact manifold $(U,C|_U)$ is called a \emph{contact thickening} of $(S,C_S)$. Now, let $NS$ be the normal bundle of $S$ in $U$. Clearly $NS = T_\ell {}^\ast \Ff$, hence $N_\ell S = T^\ast \mathcal F$. According to the proof of Corollary \ref{cor:L_infty_precont} the choice of a complementary distribution $G$ determines an $L_\infty$-algebra structure on $\Gamma (\wedge^\bullet N_\ell S \otimes \ell)[1]  = \Gamma (\wedge^\bullet T^\ast \Ff \otimes \ell)[1]$. Moreover, such $L_\infty$-structure is actually independent of the choice of $G$ up to $L_\infty$-isomorphisms. Sections of $\wedge^\bullet T^\ast \Ff \otimes \ell$ are $\ell$-valued leaf-wise differential forms on $S$ and we also denote them by $\Omega^\bullet (\Ff, \ell)$ (see below).

\subsection{The transversal geometry of the characteristic foliation}
\label{sec:transversal_geometry}

Similarly as in the symplectic case (cf.~\cite[Section 9.3]{OP2005}), the multi-brackets in the $L_\infty$-algebra of a pre-contact manifold can be expressed in terms of the ``geometry transversal to the characteristic foliation''.
To write down this expression, we have to describe the relevant transversal geometry.
Let $(S,C_S)$ be a pre-contact manifold with characteristic foliation $\Ff $.
Denote by $N\Ff :=TS/T\Ff $ the normal bundle to $\Ff $, and by $N^\ast\Ff =(N\Ff )^\ast = T^0 \Ff \subset T^\ast S$ the conormal bundle to $\Ff $.

Recall that $T \Ff $ is a Lie algebroid. The standard Lie algebroid differential in $\Omega^\bullet(\Ff ):=\Gamma(\wedge^\bullet T^\ast\Ff )$ will be denoted by $d_{\Ff }$ and called the \emph{leaf-wise de Rham differential}. There is a flat $T \Ff $-connection $\nabla$ in $N^\ast \Ff $ well-defined by
\[
\nabla_X \eta :=\mathcal L_X \eta, \quad X \in \Gamma (T \Ff ), \quad \eta \in \Gamma (N^\ast \Ff ).
\]

\begin{remark}
	The connection $\nabla$ is ``dual to the Bott connection'' in $N \Ff $.
\end{remark}

As usual, $\nabla$ determines a differential in $\Omega^\bullet(\Ff ,N^\ast\Ff )\!:=\!\Gamma(\wedge^\bullet T^\ast\Ff \otimes N^\ast \Ff )$ denoted again by $d_{\Ff }$.
There exists also a flat $T \Ff $-connection in $\ell$, denoted again by $\nabla$, and defined by
\begin{equation*}
	\nabla_X \theta(Y) := \theta([X,Y]),\quad X\in\Gamma(T\Ff ),\quad Y\in\mathfrak X(M).
\end{equation*}
The corresponding differential in $\Omega^\bullet(\Ff ,\ell):=\Gamma(\wedge^\bullet T^\ast\Ff \otimes \ell)$ will be also de\-noted by $d_\Ff $. Now, let $J^1_\bot \ell$ be the vector subbundle of $J^1\ell$ given by the kernel of the vector bundle epimorphism
\[
\varphi_\nabla :J^1\ell\longrightarrow T^\ast\Ff \otimes \ell, \quad j^1_x\lambda\longmapsto(d_\Ff \lambda)_x.
\]
Sections of $J^1_\bot \ell$ will be interpreted as sections of $J^1\ell$ ``transversal to $\Ff $''. Note also that the Spencer sequence $0 \rightarrow T^\ast S \otimes \ell\rightarrow J^1\ell \rightarrow \ell \rightarrow 0$ restricts to a ``transversal Spencer sequence'' $0 \rightarrow N^\ast \Ff \otimes \ell\rightarrow J_\bot ^1\ell \rightarrow \ell\rightarrow 0$ and the two fit in the following exact commutative diagram of vector bundle morphisms
\begin{equation*}
		\xymatrix{ &0\ar[d]&0\ar[d]&0\ar[d]&\\
			0\ar[r]&N^\ast\Ff \otimes \ell \ar[r]\ar[d]&J^1_\bot \ell\ar[r]\ar[d]&\ell\ar[r]\ar@{=}[d]&0\\
			0\ar[r]&T^\ast S\otimes \ell\ar[r]\ar[d]&J^1 \ell\ar[r]\ar[d]^-{\varphi_\nabla}&\ell\ar[d]\ar[r]&0\\
			0\ar[r]&T^\ast\Ff \otimes \ell\ar[d]\ar@{=}[r]&T^\ast\Ff \otimes \ell\ar[r]\ar[d]&0&&\\
			&0&0&&&}
\end{equation*}

In what follows the embeddings $\gamma: T^\ast S\otimes \ell \INTO J^1\ell$ and $N^\ast\Ff \otimes \ell \INTO J^1_\bot \ell$ will be understood, and we will identify $df\otimes\lambda$ with $j^1(f \lambda) - f j^1\lambda$, for any $f\in C^\infty (S)$, and $\lambda\in\Gamma(\ell)$.
Recall that an arbitrary $\alpha\in\Gamma(J^1\ell)$ can be uniquely decomposed as $\alpha=j^1\lambda+\eta$, with $\lambda\in\Gamma(\ell)$, and $\eta\in\Gamma(T^\ast S\otimes \ell)$.
Then, by definition, for $p\in S$, $\alpha_p$ is in $J^1_\bot \ell$ if and only if $\varphi_\nabla(\eta_p)=-(d_{\Ff }\lambda)_p$.
Finally, there is a flat $T\Ff $-connection in $J^1_\bot \ell$, also denoted by $\nabla$, well-defined by
\begin{equation}\label{eq:nabla_J1}
	\nabla_X \psi= \Ll_{\nabla_X} \psi,
\end{equation}
for all $\psi \in \Gamma (J^1_\bot L)$ and $X\in\Gamma(T\mathcal{F})$. 
Accordingly, there exists a differential in $\Omega^\bullet(\Ff ,J^1_\bot \ell) :=\Gamma(\wedge^\bullet T^\ast\Ff \otimes J^1_\bot \ell)$ which we also denote by $d_{\Ff }$.

Now, note that the curvature form of $(S,C_S)$, $\omega_S:\wedge^2 C_S\rightarrow \ell$, descends to a(n $\ell$-valued) symplectic form $\omega_\bot:\wedge^2(C_S/T\Ff )\rightarrow \ell$. In particular, it determines a vector bundle isomorphism $\omega_\bot^\flat:C_S/T\Ff \rightarrow(C_S/T\Ff )^\ast\otimes \ell$ (see Section \ref{sec:cois_cont}).

\begin{remark}
	Let $p\in S$, $X\in\mathfrak X(S)$, and $\lambda=\theta(X)$.
	Recall that $\phi_X\in\Gamma(C_S^\ast\otimes \ell)$ is defined by $\phi_X(Y)=\theta([X,Y])$, for all $Y\in\Gamma(C_S)$ (cf. Section~\ref{sec:cois_cont}).
	Then we have that
	$
	j_p^1\lambda\in J^1_\bot \ell$ if and only if $(\phi_X)_p\in (C_S/T\Ff )^\ast\otimes \ell
	$.
	Furthermore it is easy to check, for instance using local coordinates, that when $j^1_p\lambda=0$ the following holds:
	\begin{enumerate}
		\item $X_p\in (C_S)_p$, and
		\item $\omega(X_p,Y_p)=\theta([X,Y]_p)$, for all $Y\in\Gamma (C_S)$.
	\end{enumerate}
	Therefore, if $j^1_p\lambda=0$, then $X_p\Mod T_p\Ff =(\omega_\bot^\flat)^{-1}(\phi_X)_p$, and the following definition is well-posed.
\end{remark}

\begin{definition}
	Define $\sigma J_\bot^\#:J^1_\bot \ell\to N\Ff $ to be the vector bundle morphism uniquely determined by:
	\begin{equation}\label{eq:sharp_bot}
		\sigma J_\bot^\#(j_p^1\lambda):=X_p{}\Mod T_p\Ff -(\omega_\bot^\flat)^{-1}(\phi_X)_p,
	\end{equation}
	where $p\in M$, $\lambda\in\Gamma(\ell)$, and $X\in\mathfrak X(S)$, such that $j^1_p\lambda \in J^1_\bot L$, and $\lambda=\theta(X)$.
\end{definition}

\begin{proposition}
	There exists a vector bundle morphism $J_\bot:\wedge^2 J^1_\bot \ell\to \ell$ uniquely determined by putting
	\begin{equation}\label{eq:Lambda_bot}
		J_\bot(j^1_p \lambda,j^1_p \lambda ')=\theta([Y,Y^\prime]_p),
	\end{equation}
	where $p\in M$, $\lambda, \lambda '$ are $\nabla$-constant local sections of $\ell$ and $Y,Y^\prime\in\mathfrak X(S)$ are such that $\sigma J_\bot^\#(j^1 \lambda)=Y \Mod \Gamma (T\Ff )$ and $\sigma J_\bot^\#(j^1 \lambda^\prime)=Y^\prime \Mod \Gamma (T\Ff ) $.
\end{proposition}
\begin{proof}
	First of all notice that every point in $J^1_\bot \ell$ is the first jet of a $\nabla$-constant local section of $\ell$. Hence Definition (\ref{eq:Lambda_bot}) makes sense. Moreover, the right hand side of (\ref{eq:Lambda_bot}) does only depend on $\lambda, \lambda^\prime$. Indeed, first of all, $\theta (Y) = \lambda$, and $\theta (Y ') = \lambda '$. Moreover, if $Y \in \Gamma (T \Ff)$, then, $0 = \nabla_Y \lambda ' = \theta ([Y,Y '])$. Finally, one can check, e.g.~using local coordinates, that the right hand side of (\ref{eq:Lambda_bot}) does actually depend on the first jets at $p$ of $\lambda, \lambda '$ only. This shows that $J_\bot$ is well-defined.
\end{proof}

The vector bundle morphism $J{}_\bot :\wedge^2 J^1_\bot \ell\to \ell$ will be interpreted as the \emph{trans\-ver\-sal version} of the bi-linear form $J $ associated to a Jacobi bi-differential operator $J$.

\subsection{An explicit formula for the multi-brackets}
\label{sec:multi-brackets}

Retaining the notations from the previous subsection, choose a distribution $G$ on $S$ which is complementary to $T\Ff $, i.e.~$TS=G\oplus T\Ff $. There is a dual splitting $T^\ast S\cong T^\ast\Ff \oplus N^\ast\Ff $ and there are identifications
$
N\Ff \cong G$, $T^\ast\Ff \cong G^0
$.
Furthermore the induced splitting of $0\rightarrow N^\ast\Ff \otimes \ell \rightarrow T^\ast S\otimes \ell\rightarrow T^\ast\Ff \otimes \ell \rightarrow 0$ lifts to a splitting of $0\rightarrow J^1_\bot \ell \rightarrow J^1 \ell\rightarrow T^\ast\Ff \otimes \ell \rightarrow 0$. Hence $J^1\ell\cong J^1_\bot \ell\oplus (T^\ast\Ff \otimes \ell)$. Let $F\in\Gamma( \wedge^2 G^\ast \otimes TS/G )$ be the curvature form of $G$. The curvature $F$ will be also understood as an element $F\in\Gamma(\wedge^2 N^\ast\Ff \otimes T\Ff )\subset\Gamma(\wedge^2(J^1_\bot \ell\otimes \ell^\ast)\otimes T\Ff )$, where we used the embedding $N^\ast\Ff \otimes \ell \INTO J^1_\bot \ell$.

Let $d_G:C^\infty(S)\rightarrow\Gamma(N^\ast\Ff )$ be the composition of the de Rham differential $d:C^\infty(S)\rightarrow\Omega^1(S)$ followed by the projection $\Omega^1(S)\rightarrow\Gamma(N^\ast\Ff )$ determined by the decomposition $T^\ast S=T^\ast\Ff \oplus N^\ast\Ff $. Then $d_G$ is a $\Gamma (N^\ast \Ff )$-valued derivation of $C^\infty (S)$ and will be interpreted as the ``transversal de Rham differential''.

\begin{proposition}\label{prop:eps}
	There exists a unique degree zero, graded $\mathbb R$-linear map $\varepsilon:\Omega(\Ff )\rightarrow\Omega(\Ff ,N^\ast\Ff )$ such that
	\begin{enumerate}
		\item $\varepsilon|_{C^\infty(S)}=d_G$,
		\item $[\varepsilon,d_{\Ff }]=0$, and
		\item the following identity holds
		\begin{equation*}
			\varepsilon(\tau \wedge \tau')=\tau \wedge \varepsilon(\tau') + (-)^{|\tau ||\tau' |} \tau' \wedge \varepsilon(\tau),
		\end{equation*}
		for all homogeneous $\tau, \tau' \in \Omega(\Ff )$.
	\end{enumerate}
\end{proposition}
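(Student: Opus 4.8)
The plan is to construct $\varepsilon$ explicitly on a frame adapted to the splitting $TS = G \oplus T\Ff$ and then verify that the construction is independent of the frame, using the required properties to pin it down. First I would note that, because of property (3), the operator $\varepsilon$ is a degree zero \emph{derivation} of the exterior algebra $\Omega(\Ff) = \Gamma(\wedge^\bullet T^\ast\Ff)$ with values in the module $\Omega(\Ff, N^\ast\Ff)$; hence, just like a vector-field-type derivation, it is completely determined by its action on degree zero elements (functions) and on degree one elements (leaf-wise one-forms). On functions it must agree with $d_G$ by (1). So the only freedom is the action on $\Omega^1(\Ff)$, and the job is to show that property (2), $[\varepsilon, d_\Ff] = 0$, forces a unique consistent choice there, and that this choice does extend to a well-defined derivation (i.e.\ kills the relations in the exterior algebra).

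The key steps, in order: (i) Choose a local frame $X_1,\dots,X_d$ of $T\Ff$ together with a complementary coframe, inducing local generators of $\Omega^1(\Ff)$; since $d_\Ff$ of a function $f$ is $\sum_i (X_i f)\,\alpha^i$ in the associated leaf-wise coframe $\{\alpha^i\}$, the constraint $\varepsilon d_\Ff f = d_\Ff \varepsilon f = d_\Ff (d_G f)$ for all $f$ determines $\varepsilon(\alpha^i)$ up to the ambiguity of adding something annihilated on all exact leaf-wise forms — but exact forms locally span, so $\varepsilon(\alpha^i)$ is forced. (ii) Verify that the resulting local formula for $\varepsilon$ on $\Omega^1(\Ff)$ is independent of the choice of adapted frame; this is where one uses that the transversal de Rham differential $d_G$ and the flat $T\Ff$-connection $\nabla$ in $N^\ast\Ff$ (the one ``dual to the Bott connection'') are globally defined, so $\varepsilon(\alpha^i)$ can be rewritten intrinsically in terms of $d_G$, $d_\Ff$ and $\nabla$. (iii) Extend by the Leibniz rule (3) to all of $\Omega(\Ff)$, check well-definedness (compatibility with $\alpha^i \wedge \alpha^j = -\alpha^j\wedge\alpha^i$ and with $f\alpha^i$), and check that $[\varepsilon, d_\Ff] = 0$ holds in all degrees — this last point follows because both $\varepsilon d_\Ff$ and $d_\Ff \varepsilon$ are derivations (of appropriate parity) that agree on functions and on $\Omega^1(\Ff)$, by construction, and two derivations agreeing on generators agree everywhere. (iv) Uniqueness: any $\varepsilon'$ satisfying (1)--(3) is a derivation agreeing with $\varepsilon$ on functions; then $\varepsilon' - \varepsilon$ is a derivation vanishing on functions, hence $C^\infty(S)$-linear and determined by its value on $\Omega^1(\Ff)$, and property (2) for both forces that value to vanish on exact leaf-wise forms, hence (by locality and the fact that exact forms generate) to vanish identically.

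The step I expect to be the main obstacle is (ii)--(iii): showing the locally-defined formula for $\varepsilon$ on one-forms is frame-independent and that the extension by Leibniz is consistent with the algebra relations. Concretely, one must check that if $\{\alpha^i\}$ and $\{\tilde\alpha^i\}$ are two leaf-wise coframes related by a $GL_d$-valued transition function, the two prescriptions for $\varepsilon$ agree; this amounts to a compatibility identity between $d_G$ applied to the transition functions and the structure functions of the frame, and it is precisely the flatness/torsion-freeness packaged in the Bott connection that makes it work. I would handle this by first writing $\varepsilon$ intrinsically: for $\eta \in \Omega^1(\Ff)$ viewed as a leaf-wise one-form, declare $\varepsilon(\eta)$ to be the unique element of $\Omega^1(\Ff, N^\ast\Ff)$ whose pairing against $X \in \Gamma(T\Ff)$ is $d_G(\eta(X)) - \eta(\nabla^{\mathrm{Bott}}_X(-))$-type expression, chosen so that $[\varepsilon,d_\Ff]=0$ on functions; once this intrinsic description is in place, frame-independence is automatic and the remaining checks are the routine derivation bookkeeping that I will not grind through here.
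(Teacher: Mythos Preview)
Your strategy is correct and shares its core with the paper's proof: both recognize that $\varepsilon$ is a degree-zero derivation of $\Omega(\Ff)$ with values in the module $\Omega(\Ff,N^\ast\Ff)$, hence is determined by its action in degrees $0$ and $1$, and both use property (2) to force the definition on $\Omega^1(\Ff)$ via $\varepsilon(d_\Ff f)=d_\Ff(d_G f)$.

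The difference is in execution. You work with a local frame $\{\alpha^i\}$ for $T^\ast\Ff$, solve for $\varepsilon(\alpha^i)$ from the constraint, and then have to check frame-independence---this is your anticipated obstacle (ii)--(iii). The paper sidesteps frames entirely by presenting $\Omega^1(\Ff)$ via generators $d_\Ff f$ (for all $f\in C^\infty(S)$) and relations: additivity, the Leibniz rule $d_\Ff(fg)=f\,d_\Ff g+g\,d_\Ff f$, and the vanishing $d_\Ff f=0$ on leaf-wise constant $f$. Defining $\varepsilon(d_\Ff f):=d_\Ff(d_G f)$ directly on these generators and extending by the function-Leibniz rule $\varepsilon(f\sigma)=f\varepsilon(\sigma)+\sigma\otimes d_G f$, one only has to check that the three relations are preserved. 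The first two are routine; the third requires a short lemma: if $d_\Ff f=0$ then $d_\Ff(d_G f)=0$. This is the paper's Lemma preceding the proof, and it is the intrinsic replacement for your frame-independence computation---indeed, it encodes exactly the compatibility between $d_G$ and the Bott connection that you invoke. The payoff of the generators-and-relations approach is that well-definedness reduces to this one-line lemma, with no transition-function bookkeeping.
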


In order to prove Proposition \ref{prop:eps}, the following Lemma will be useful:
\begin{lemma}\label{lem:eps}
	Let $f $ be a leaf-wise constant local function on $S$, i.e.~$d_\Ff f = 0$, then $d_\Ff d_G f = 0$ as well.
\end{lemma}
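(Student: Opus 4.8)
The plan is to translate the hypothesis $d_\Ff f = 0$ into a statement about the ordinary de Rham differential, and then reduce the claim to the one-line identity $L_X(df) = d(X(f)) = 0$ for leaf-wise vector fields $X$. The first step: the equation $d_\Ff f = 0$ says exactly that the $1$-form $df \in \Omega^1(S)$ vanishes on $T\Ff$, i.e.~$df$ is a local section of $T^0\Ff = N^\ast\Ff \subset T^\ast S$. Hence the projection $\Omega^1(S) \to \Gamma(N^\ast\Ff)$ associated with the splitting $T^\ast S = T^\ast\Ff \oplus N^\ast\Ff$ leaves $df$ unchanged, so that $d_G f = df$ as local sections of $N^\ast\Ff$. (In particular this already shows $d_G f$ is independent of the auxiliary distribution $G$ under the standing hypothesis, which is what makes the lemma plausible.)

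Next I would compute $d_\Ff d_G f$ by unwinding the leaf-wise differential on $\Omega^\bullet(\Ff, N^\ast\Ff)$. Since $d_G f$ is a degree-zero element of $\Omega^\bullet(\Ff, N^\ast\Ff)$, for every $X \in \Gamma(T\Ff)$ one has $(d_\Ff d_G f)(X) = \nabla_X(d_G f)$, where $\nabla$ is the flat $T\Ff$-connection on $N^\ast\Ff$, and $\nabla_X\eta = L_X\eta$ for $\eta\in\Gamma(N^\ast\Ff)$ (this makes sense precisely because $T\Ff$ is involutive, so $L_X$ preserves $\Gamma(N^\ast\Ff)$). Using $d_G f = df$, Cartan's identity $L_X\omega = d(\iota_X\omega) + \iota_X(d\omega)$, and $d(df) = 0$, I obtain
\[
(d_\Ff d_G f)(X) = \nabla_X(d_G f) = L_X(df) = d(\iota_X\, df) = d\big(X(f)\big) = 0,
\]
where the last equality holds because $X(f) = (d_\Ff f)(X) = 0$ by hypothesis. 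As $X\in\Gamma(T\Ff)$ was arbitrary, this yields $d_\Ff d_G f = 0$.

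I do not expect any genuine obstacle: the entire content is the computation $L_X(df) = d(X(f)) = 0$ for leaf-wise vector fields $X$. The only care needed is notational bookkeeping — being explicit that ``$d_\Ff$'' here denotes the differential on $N^\ast\Ff$-valued leaf-wise forms built from $\nabla$, and that under the hypothesis $d_\Ff f = 0$ the two a priori distinct objects $d_G f$ and $df$ coincide. I would state both identifications before running the chain of equalities so that no ambiguity remains.
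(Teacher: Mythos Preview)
Your proof is correct and follows essentially the same approach as the paper: both first observe that $d_\Ff f = 0$ forces $df \in \Gamma(N^\ast\Ff)$ so that $d_G f = df$, and then reduce the claim to $\nabla_X(d_G f) = \Ll_X(df) = d(X(f)) = 0$ for all $X \in \Gamma(T\Ff)$. Your version is slightly more explicit in invoking Cartan's formula, but the argument is identical.
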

\begin{proof}
	Let $f$ be as in the statement. First of all, note that $d f$ takes values in $N^\ast \Ff$, so that $d_G f = df$. Now recall that $d_\Ff d_G f = 0$ iff
	$0 = \langle d_\Ff d_G f, X \rangle = \nabla_X d_G f\! =\! \Ll_X d_G f $
	for all $X \!\in\! \Gamma (T \Ff)$, where $\nabla$ is the canonical $T \Ff$-connection in $N^\ast \Ff$. But
	$
	\Ll_X d_G f = \Ll_X df = d (X f) = 0
	$.
	This completes the proof.
\end{proof}

\begin{proof}[Proof of Proposition {\ref{prop:eps}}]
	The graded algebra $\Omega (\Ff)$ is generated in degree $0$ and $1$. In order to define $\eps$, we first define it on the degree one piece $\Omega^1 (\Ff)$ of $\Omega (\Ff)$. Thus, note that $\Omega^1 (\Ff)$ is generated, as a $C^\infty (S)$-module, by leaf-wise de Rham differentials $d_\Ff f \in \Omega^1 (\Ff)$ of functions $f \in C^\infty (S)$. The only relations among these generators are the following
	\begin{equation} \label{eq:rel}
		\begin{aligned}
			d_\Ff (f+g) & = d_\Ff f + d_\Ff g, \\
			d_\Ff (fg) &= f d_\Ff g + g d_\Ff f , \\
			d_\Ff f & = 0 \text{ on every open domain where $f$ is leaf-wise constant},
		\end{aligned}
	\end{equation}
	where $f,g \in C^\infty (S)$. Now define $\eps : \Omega^1 (\Ff) \to \Omega^1 (\Ff, N^\ast \Ff)$ on generators by putting
	\[
	\eps f := d_G f \quad \text{and} \quad \eps d_\Ff f := d_\Ff d_G f,
	\]
	and extend it to the whole $\Omega^1 (\mathcal{F})$ by prescribing $\R$-linearity and the following Leibniz rule:
	\begin{equation}\label{eq:leib_dG}
		\eps (f \sigma) = f \eps (\sigma) + \sigma \otimes d_G f ,
	\end{equation}
	for all $f \in C^\infty (S)$, and $\sigma \in \Omega^1 (\mathcal{F})$. In order to see that $\eps$ is well defined it suffices to check that it preserves relations (\ref{eq:rel}). Compatibility with the first two relations can be checked by a straightforward computation that we omit. Compatibility with the third relation immediately follows from Lemma \ref{lem:eps}.
	Finally, in view of the Leibniz rule (\ref{eq:leib_dG}), $d_G$ and $\eps$ combine and extend to a well-defined derivation $\Omega (\Ff) \to \Omega (\Ff, N^\ast \Ff)$. By construction, the extension satisfies all required properties. Uniqueness is obvious.
\end{proof}

The graded differential operator $\varepsilon$ will be also denoted by $d_G$.

Similarly, there is a ``transversal version of the first jet prolongation $j^1$''. Namely, let $j^1_G:\Gamma(\ell)\rightarrow\Gamma(J^1_\bot \ell)$ be the composition of the first jet prolongation $j^1:\Gamma(\ell)\rightarrow\Gamma(J^1\ell)$ followed by the projection $\Gamma(J^1\ell)\rightarrow\Gamma(J^1_\bot \ell)$ determined by the decomposition $J^1\ell=J^1_\bot \ell \oplus (N^\ast\Ff \otimes \ell )$.
Then $j^1_G$ is a first order differential operator from $\Gamma(\ell)$ to $\Gamma(J^1_\bot \ell)$ such that
\begin{equation}\label{eq:leib_j1G}
	j^1_G(f\lambda)=fj^1_G\lambda+(d_Gf)\otimes\lambda,
\end{equation}
$\lambda\in\Gamma(\ell)$ and $f\in C^\infty(S)$, where, similarly as above, we understood the embedding $N^\ast\Ff \otimes \ell \INTO J^1_\bot \ell$. As announced, the operator $j^1_G$ will be interpreted as the ``transversal first jet prolongation''.

\begin{proposition}\label{prop:delta}
	There exists a unique degree zero graded $\mathbb R$-linear map $\delta:\Omega(\Ff ,\ell)\rightarrow\Omega(\Ff ,J^1_\bot \ell)$ such that
	\begin{enumerate}
		\item $\delta|_{\Gamma(\ell)}=j^1_G$,
		\item $[\delta,d_{\Ff }]=0$, and
		\item the following identity holds
		\begin{equation*}
			\delta(\tau \wedge \Omega)=\tau \wedge \delta(\omega)+d_G \tau \otimes\omega,
		\end{equation*}
		for all $\tau\in\Omega(\Ff )$, and $\omega\in\Omega(\Ff ,\ell)$, where the tensor product is over $\Omega(\Ff )$, and we understood both the isomorphism
		\begin{equation}\label{eq:isoOmega}
			\Omega(\Ff ,N^\ast\Ff )\underset{\Omega(\Ff )}{\otimes}\Omega(\Ff ,\ell)\cong\Omega(\Ff ,N^\ast\Ff \otimes \ell)
		\end{equation}
		and the embedding $N^\ast\Ff \otimes \ell \INTO J_\bot^1 \ell$.
	\end{enumerate}
\end{proposition}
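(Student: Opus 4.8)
The plan is to mirror, almost verbatim, the proof of Proposition~\ref{prop:eps}, with the transversal de Rham differential $d_G$ replaced by the transversal first jet prolongation $j^1_G$ and the coefficient bundle $N^\ast\Ff$ replaced by $J^1_\perp\ell$. For \emph{uniqueness}: since $\Omega(\Ff,\ell)$ is generated, as a module over the graded algebra $\Omega(\Ff)$, by its degree-zero part $\Gamma(\ell)$, property~(3) applied with $\Omega=\lambda\in\Gamma(\ell)$ forces $\delta(\tau\wedge\lambda)=\tau\wedge\delta(\lambda)+d_G\tau\otimes\lambda$ for every $\tau\in\Omega(\Ff)$, while property~(1) identifies $\delta(\lambda)=j^1_G\lambda$. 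As elements of the form $\tau\wedge\lambda$ with $\lambda$ a local frame of $\ell$ exhaust $\Omega(\Ff,\ell)$ locally, this determines $\delta$ completely; property~(2) then becomes a constraint to be checked, not a source of additional freedom.

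For \emph{existence}, I would take the formula suggested by the uniqueness argument as a definition: fixing a nowhere-vanishing local section $\lambda$ of $\ell$, each $\Omega\in\Omega(\Ff,\ell)$ is uniquely $\tau\wedge\lambda$ with $\tau\in\Omega(\Ff)$, and I set $\delta(\Omega):=\tau\wedge j^1_G\lambda+d_G\tau\otimes\lambda$, where on the right $d_G$ denotes the operator $\eps$ of Proposition~\ref{prop:eps}, and $\otimes$ is understood via~(\ref{eq:isoOmega}) followed by the embedding $N^\ast\Ff\otimes\ell\INTO J^1_\perp\ell$. The only point to verify is independence of the chosen local frame: replacing $\lambda$ by $f\lambda$ (hence $\tau$ by $f^{-1}\tau$) and expanding, the Leibniz rule~(\ref{eq:leib_j1G}) for $j^1_G$, the Leibniz rule~(\ref{eq:leib_dG}) for $d_G$, and property~(3) of $\eps$ produce two $d_Gf$-terms that cancel, so the locally defined maps glue to a single degree-zero, $\mathbb{R}$-linear $\delta:\Omega(\Ff,\ell)\to\Omega(\Ff,J^1_\perp\ell)$. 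Property~(1) is then immediate (take $\tau=f\in C^\infty(S)$ and use~(\ref{eq:leib_j1G})), and property~(3) is a direct computation from the definition together with the Leibniz rule for $d_G$ on $\Omega(\Ff)$.

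The main obstacle is property~(2), the commutation $[\delta,d_{\Ff}]=0$. Because both $\delta\circ d_{\Ff}$ and $d_{\Ff}\circ\delta$ satisfy Leibniz rules over $(\Omega(\Ff),d_{\Ff})$ with the same connection-type terms, it suffices to test the identity on algebra generators: on $C^\infty(S)$, on $d_{\Ff}f$ with $f\in C^\infty(S)$, on $\Gamma(\ell)$, and on $d_{\Ff}\lambda$ with $\lambda\in\Gamma(\ell)$. On the first two, the identity reduces to $[\eps,d_{\Ff}]=0$, already proved in Proposition~\ref{prop:eps}; on $\Gamma(\ell)$ it is trivially true. The genuinely new content is the equality $\delta(d_{\Ff}\lambda)=d_{\Ff}(j^1_G\lambda)$ in $\Omega^1(\Ff,J^1_\perp\ell)$. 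I would establish it by unwinding both sides in a local frame of $\ell$: on the left, write $d_{\Ff}\lambda$ through the module structure and apply the definition of $\delta$; on the right, use the explicit formula~(\ref{eq:nabla_J1}) for the flat $T\Ff$-connection $\nabla$ on $J^1_\perp\ell$ together with the fact that $j^1_G\lambda$ differs from the ordinary prolongation $j^1\lambda$ only by the $\gamma$-image of a lift of $d_{\Ff}\lambda$ (relative to the splitting induced by $G$), and the Leibniz rule for $j^1$. After the $j^1$-terms cancel, what survives is exactly the function-level identity $[\eps,d_{\Ff}]=0$ applied to the connection one-form of $\ell$ in the chosen frame. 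This bookkeeping — keeping straight the $J^1_\perp\ell$-valued forms, the embedding $N^\ast\Ff\otimes\ell\INTO J^1_\perp\ell$, and the several Leibniz rules — is the only delicate part; conceptually the proposition is forced once Proposition~\ref{prop:eps} and the definition~(\ref{eq:nabla_J1}) of $\nabla$ on $J^1_\perp\ell$ are in place.
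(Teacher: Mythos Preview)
Your approach is essentially the same as the paper's: define $\delta$ by the formula forced by properties~(1) and~(3), check well-definedness, and then reduce property~(2) to the single identity $\delta(d_{\Ff}\lambda)=d_{\Ff}(j^1_G\lambda)$ for $\lambda\in\Gamma(\ell)$. The paper packages the construction slightly differently (defining an operator on $\Omega(\Ff)\otimes_{\R}\Gamma(\ell)$ and checking it descends to the $C^\infty(S)$-tensor product), but this is equivalent to your local-frame-plus-gluing argument.

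One point where the paper is sharper: your list of ``generators'' to test for property~(2) is a bit muddled, since $\delta$ is not defined on $\Omega(\Ff)$, and the item ``on $\Gamma(\ell)$ it is trivially true'' is exactly the identity $\delta(d_{\Ff}\lambda)=d_{\Ff}(j^1_G\lambda)$ you then call ``the genuinely new content''---these are the same check. More substantively, for that key identity the paper avoids your somewhat vague unwinding by choosing the local generator $\mu$ of $\ell$ to be \emph{leaf-wise constant} ($d_{\Ff}\mu=0$). A short lemma (Lemma~\ref{lem:delta}) then gives $d_{\Ff}j^1_G\mu=0$ directly from~(\ref{eq:nabla_J1}), and the computation for $\lambda=f\mu$ becomes a three-line consequence of~(\ref{eq:leib_j1G}) and $[\eps,d_{\Ff}]=0$. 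You should make this choice explicit; it replaces the bookkeeping you anticipate with an almost immediate verification.
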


In order to prove Proposition \ref{prop:delta}, the following Lemma will be useful:
\begin{lemma}\label{lem:delta}
	Let $\mu$ be a leaf-wise constant local section of $\ell$, i.e.~$d_\Ff \mu = 0$, then $d_\Ff j^1_G \mu = 0$ as well.
\end{lemma}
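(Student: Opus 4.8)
The plan is to reduce the statement $d_{\Ff}\, j^1_G\mu = 0$ to a pointwise (or open-domain) computation and then exploit the definitions of $j^1_G$ and of the flat $T\Ff$-connection $\nabla$ on $J^1_\perp\ell$, precisely mirroring how Lemma \ref{lem:eps} was handled. Recall that $d_\Ff$ on $\Omega^\bullet(\Ff, J^1_\perp\ell)$ is the Lie algebroid differential of $T\Ff$ twisted by the connection $\nabla$ of \eqref{eq:nabla_J1}; hence for a section $\sigma \in \Gamma(J^1_\perp\ell) = \Omega^0(\Ff, J^1_\perp\ell)$ one has $d_\Ff \sigma = 0$ if and only if $\nabla_X \sigma = 0$ for all $X \in \Gamma(T\Ff)$. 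So the claim to prove is: $\nabla_X\, j^1_G\mu = 0$ for every $X\in\Gamma(T\Ff)$, under the hypothesis $d_\Ff\mu = 0$, i.e.~$\nabla_X\mu = 0$ for all such $X$.

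The key steps, in order. First I would write $j^1\mu = j^1_G\mu + \eta$, where $\eta \in \Gamma(N^\ast\Ff\otimes\ell)$ is the component of $j^1\mu$ in the complement $N^\ast\Ff\otimes\ell$ determined by the chosen splitting $J^1\ell = J^1_\perp\ell \oplus (N^\ast\Ff\otimes\ell)$; by the transversal Spencer sequence and the definition of $\varphi_\nabla$ one has $\varphi_\nabla(\eta) = -d_\Ff\mu = 0$, so in fact $\eta = 0$ under our hypothesis — wait, more carefully: $\varphi_\nabla(j^1\mu) = d_\Ff\mu = 0$, so $j^1\mu$ already lies in $\ker\varphi_\nabla = J^1_\perp\ell$, whence $j^1_G\mu = j^1\mu$ and $\eta = 0$. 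Second, I would apply the defining formula \eqref{eq:nabla_J1} for the connection $\nabla$ on $J^1_\perp\ell$ to the element $\alpha = j^1\mu = j^1\mu + 0$ (so $\lambda \rightsquigarrow \mu$, $\eta \rightsquigarrow 0$): this gives $\nabla_X(j^1\mu) = j^1(\nabla_X\mu) + \Ll_{\nabla_X}0 = j^1(\nabla_X\mu)$. Third, since $d_\Ff\mu = 0$ means precisely $\nabla_X\mu = 0$ for all $X\in\Gamma(T\Ff)$, we conclude $\nabla_X(j^1_G\mu) = j^1(0) = 0$, hence $d_\Ff\, j^1_G\mu = 0$. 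This is entirely parallel to the proof of Lemma \ref{lem:eps}, where the analogous identity $\Ll_X d_G f = \Ll_X df = d(Xf) = 0$ was used.

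The only subtlety — and the point I expect to be the main obstacle — is the bookkeeping of the two splittings: one must check that the decomposition $j^1\mu = j^1_G\mu + (\text{something in }N^\ast\Ff\otimes\ell)$ really does force the second summand to vanish when $d_\Ff\mu = 0$, i.e.~that the projection $\Gamma(J^1\ell)\to\Gamma(J^1_\perp\ell)$ restricts to the identity on $j^1\mu$ precisely in that case. This is immediate from the exact commutative diagram: $j^1\mu \in \ker\varphi_\nabla$ iff $(d_\Ff\mu) = \varphi_\nabla(j^1\mu) = 0$. Once this is noted, the formula \eqref{eq:nabla_J1} applies verbatim with $\eta = 0$ and the Lie-derivative term drops out, leaving only $j^1(\nabla_X\mu)$, which vanishes by hypothesis. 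No further computation is needed.
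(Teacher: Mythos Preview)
Your proposal is correct and follows essentially the same argument as the paper: both first observe that $d_\Ff\mu=0$ forces $j^1\mu\in\Gamma(J^1_\perp\ell)$, hence $j^1_G\mu=j^1\mu$, and then apply formula \eqref{eq:nabla_J1} with $\eta=0$ to get $\nabla_X j^1_G\mu = j^1(\nabla_X\mu)=0$. The only difference is presentational; your discussion of the splitting bookkeeping is slightly more explicit than the paper's, but the mathematical content is identical.
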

\begin{proof}
	Let $\mu$ be as in the statement. First of all note that, by the very definition of $J^1_\bot \ell$, $j^1 \mu$ takes values in $J^1_\bot \ell$ so that $j^1_G \mu = j^1 \mu$. Now recall that $d_\Ff j^1_G \mu = 0$ iff
	$0 = \langle d_\Ff j^1_G \mu, X \rangle = \nabla_X j^1_G \mu $
	for all $X \in \Gamma (T \Ff)$, where $\nabla$ is the canonical $T \Ff$-connection in $J^1_\bot \ell$. But
	$
	\nabla_X j^1_G \mu = \nabla_X j^1 \mu = j^1 \nabla_X \mu = 0
	$,
	where we used (\ref{eq:nabla_J1}). This completes the proof.
\end{proof}

\begin{proof}[Proof of Proposition {\ref{prop:delta}}]
	In this proof a tensor product $\otimes$ will be over $C^\infty (S)$ unless otherwise stated. We can regard $\Omega (\Ff , \ell) = \Omega (\Ff ) \otimes \Gamma (\ell)$ as a quotient of $\Omega (\Ff ) \otimes_\R \Gamma (\ell)$ in the obvious way. Our strategy is defining an operator $\delta ': \Omega (\Ff ) \otimes_\R \Gamma (\ell) \to \Omega (\Ff, J^1_\bot \ell)$ and prove that it descends to an operator $\delta : \Omega(\Ff ,\ell)\rightarrow\Omega(\Ff ,J^1_\bot \ell)$ with the required properties. Thus, for $\sigma \in \Omega (\Ff)$ and $\lambda \in \Gamma (\ell)$ put
	\begin{equation}\label{eq:delta'}
		\delta ' (\sigma \otimes_\R \lambda):= \sigma \otimes j^1_G \lambda+ d_G \sigma\otimes_{\Omega (\Ff)} \lambda \in \Omega (\Ff, J^1_\bot \ell),
	\end{equation}
	where, in the second summand, we understood both the isomorphism (\ref{eq:isoOmega}) and the embedding $N^\ast \Ff \otimes \ell \INTO J^1_\bot \ell$ (just as in the statement of the proposition). In order to prove that $\delta '$ descends to an operator $\delta$ on $\Omega (\Ff, \ell)$ it suffices to check that $\delta ' (f \sigma \otimes_\R \lambda) = \delta ' (\sigma \otimes_\R f \lambda)$ for all $\sigma, \lambda$ as above, and all $f \in C^\infty (S)$. This can be easily obtained using the derivation property of $d_G$ and (\ref{eq:leib_j1G}). Now, Properties 1) and 3) immediately follows from (\ref{eq:delta'}). In order to prove Property 2), it suffices to check that $\delta d_\Ff \lambda = d_\Ff j^1_G \lambda$ for all $\lambda \in \Gamma (\ell)$ (and then use Property 3)). It is enough to work locally. Thus, let $\mu$ be a local generator of $\Gamma (\ell)$ with the further property that $d_\Ff \mu = 0$. Moreover, let $f \in C^\infty (S)$, and compute
	\begin{align*}
		\delta d_\Ff ( f \mu ) & = \delta (d_\Ff f\! \otimes\! \mu) = d_\Ff f\! \otimes\! j^1_G \mu + d_G d_\Ff f\! \otimes\! \mu = d_\Ff f\! \otimes\!j^1_G \mu + d_\Ff d_G f\! \otimes\! \mu \\
		&= d_\Ff ( f j^1_G \mu + d_G f\! \otimes\! \mu) = d_\Ff (j^1_G f \mu),
	\end{align*}
	where we used $d_\Ff \mu = 0$, Proposition \ref{prop:eps}, Lemma \ref{lem:delta}, and (\ref{eq:leib_j1G}). Uniqueness of $\delta$ is obvious.
\end{proof}

The graded differential operator $\delta$ will be also denoted by $j^1_G$.

Now, interpret $J{}_\bot \in\Gamma(\wedge^2(J^1_\bot \ell)^\ast\otimes \ell)$ as a section $\# \in\Gamma((J^1_\bot \ell\otimes \ell^\ast)^\ast\otimes (J^1_\bot \ell)^\ast)$.
The interior product of $\# $ and $F\in\Gamma(\wedge^2(J^1_\bot \ell\otimes \ell^\ast)\otimes T\Ff )$ is a section $F^\#\in\Gamma(\operatorname{End}(J^1_\bot \ell)\otimes T\Ff \otimes \ell^\ast)$.
For any $\mu \in\Omega^{k+1}(\Ff ,\ell)$, the interior product of $F^\#$ and $\mu$ is a section $i_{F^\#}\mu\in\Omega^{k}(\Ff ,\operatorname{End}J^1_\bot \ell)$. Now, we extend
\begin{enumerate}
	\item the bi-linear map $J_{\bot} : \wedge^2 J^1_\bot \ell \to \ell $ to a degree $+1$, $\Omega (\Ff)$-bilinear, symmetric form
	\[
	\langle-,-\rangle_{C}:\Omega(\Ff , J^1_\bot \ell)[1]\times\Omega(\Ff , J^1_\bot \ell)[1]\longrightarrow \Omega(\Ff , \ell)[1]
	\]
	\item the natural bilinear map $\circ : \operatorname{End} J^1_\bot \ell \otimes  \operatorname{End} J^1_\bot \ell \to\operatorname{End} J^1_\bot \ell $ to a degree $+1$, $\Omega (\Ff)$-bilinear map
	\[
	\Omega (\Ff, \operatorname{End} J^1_\bot \ell )[1] \times \Omega (\Ff, \operatorname{End} J^1_\bot \ell )[1] \longrightarrow \Omega (\Ff, \operatorname{End} J^1_\bot \ell )[1],
	\] also denoted by $\circ$, and
	\item the tautological action $\operatorname{End} J^1_\bot \ell \otimes   J^1_\bot \ell \to J^1_\bot \ell $ to a degree $+1$, $\Omega (\Ff)$-linear action
	\[
	\Omega (\Ff, \operatorname{End} J^1_\bot \ell )[1] \times \Omega (\Ff, J^1_\bot \ell )[1] \longrightarrow \Omega (\Ff, J^1_\bot \ell )[1].
	\]
\end{enumerate}

\begin{theorem}\label{prop:multi}
	The first (unary) bracket in the $L_\infty$-algebra structure on $\Omega(\Ff ,\ell)[1] $ is $d_{\Ff }$. Moreover, for $k > 1$, the $k$-th multi-bracket is given by
	\begin{equation}\label{multi}
		\mathfrak{m}_k (\nu_1,\dots,\nu_k) = \frac{1}{2}\sum_{\sigma\in S_k}\epsilon(\sigma, \boldsymbol{\nu})\left\langle j^1_G\nu_{\sigma(1)},(i_{F^\#}\nu_{\sigma(2)}\circ\cdots\circ i_{F^\#}\nu_{\sigma(k-1)})j^1_G\nu_{\sigma(k)}\right\rangle_{C},
	\end{equation}
	for all homogeneous $\nu_1\,\dots,\nu_k\in\Omega(\Ff ,\ell)[1] $, where $\epsilon (\sigma, \boldsymbol{\mu})$ is the Koszul sign prescribed by the permutations of the $\mu$'s.
\end{theorem}

\begin{proof}
	See Appendix \ref{sec:OP_formula}.
\end{proof}

\begin{remark}
	The explicit form of the contact thickening (see Subsection~\ref{sec:contact_thickening}) shows that the Jacobi bracket is actually fiber-wise entire. In particular Corollaries \ref{cor:conver} and \ref{cor:hequi} always apply to the contact case.
\end{remark}

\section{An example}
\label{subsec:}

In \cite{Z2008}, Zambon presents an example of a coisotropic submanifold $S_0$ in a symplectic manifold whose coisotropic deformation problem is obstructed. Zambon's example was also considered by Oh and Park in \cite{OP2005}, and in the latter paper the obstruction is discussed in terms of the $L_\infty$-algebra of $S_0$. More recently the same example was reconsidered by L\^e and Oh in \cite{LO2012}, where it is proved that $S_0$ is also obstructed when seen as a coisotropic submanifold in a l.c.s. manifold. There is a contact analogue of Zambon's example, discussed in some details in \cite{T2017} (see also \cite{T2016}). Here, we describe  another example of a regular coisotropic submanifold $S$ in a contact manifold whose coisotropic deformation problem is formally obstructed. Unlike the example in \cite[Section 4.8]{T2017}, $S$ has a non-simple characteristic foliation. From this point of view, this section is closely inspired by \cite[Section 12]{OP2005} (symplectic case, see also \cite{K2010}). Actually, the $S$ in this section can be guessed from that in \cite[Section 12]{OP2005} via ``contactization''. Nonetheless the contact and the symplectic cases seem to be independent: seemingly no result about the one could be found from the other.

Consider the $7$-dimensional coorientable contact manifold $(M,C)$, with $M:=\R^6\times\bbS^1$ and $C:=\ker\theta$, where the global contact $1$-form $\theta\in\Omega^1(M)$ is given by
\begin{equation*}
	\theta:=d\phi-\sum_{i=1}^3p_idq^i.
\end{equation*}
Here $(q^i,p_i)$ are the Cartesian coordinates on $\R^6\cong T^\ast\R^3$ and $\phi$ is the angle coordinate on $\bbS^1$.
We will also use polar coordinates $(r_i,\phi_i)$ on each plane $\R^2=\{(q^i,p_i)\}$, $i=1,2,3$.

The contact distribution $C$ possesses a global frame given by
\begin{equation*}
	\frac{\partial}{\partial p_i},\qquad D_i:=\frac{\partial}{\partial q^i}+p_i\frac{\partial}{\partial\phi},
\end{equation*}
and, for $f\in\Gamma(\R_M)= C^\infty(M)$, the corresponding contact vector field $X_f$ is given by
\begin{equation*}
	X_f=D_if\frac{\partial}{\partial p_i}-\sum_{i=1}^3\frac{\partial f}{\partial p_i}D_i+f\frac{\partial}{\partial\phi}.
\end{equation*}
In particular, $\partial/\partial\phi$ is the Reeb vector field $X_1$.
As we know, there is an induced Jacobi bracket $J \equiv \{-,-\}$ on the trivial line bundle $\R_M\to M$.
It is straightforward to check that
\begin{equation*}
	J=D_i\wedge\frac{\partial}{\partial p_i}+\id\wedge\frac{\partial}{\partial\phi}.
\end{equation*}

Take the functions $H_i :=\frac{1}{2}r_i^2 \in C^\infty(M)$, $i=1,2,3$.
For every positive real number $\alpha>0$, put $H_{(\alpha)}:=H_1+\alpha H_2$, and define the $5$-dimensional submanifold $S_\alpha\subset M$ by putting
\begin{equation*}
	S_\alpha:=H_{(\alpha)}^{-1}\left(1/4\right)\cap H_3^{-1}\left(1/2\right).
\end{equation*}
Since $\{H_{(\alpha)},H_3\}=0$, and $\theta$, $dH_{(\alpha)}$, $dH_3$, are linearly independent on a neighborhood of $S_\alpha$,
from~Proposition~\ref{prop:coisotropics_contact_setting} we get that $S_\alpha$ is a regular coiso\-tropic submanifold of $(M,C)$.
Hence, it inherits the structure of a pre-contact manifold, with pre-contact distribution $C_\alpha:=C\cap TS_\alpha$, i.e.~$C_\alpha$ is the kernel of the global pre-contact form $\theta_\alpha:=\theta|_{TS_\alpha}\in\Omega^1(S_\alpha)$.
Moreover its characteristic distribution $T\Ff$ possesses a global frame consisting of $X_{H_{(\alpha)}-1/4}|_{S_\alpha}$ and $X_{H_3-1/2}|_{S_\alpha}$.
In particular, all characteristic leaves of $(S_\alpha, C_\alpha)$ are orientable.

\begin{remark}
	For $\alpha=1$, the characteristic foliation $\Ff$ is simple, and its leaf space is diffeomorphic to $\mathbb{CP}^1\times\bbS^1$.
	On the other hand, for $\alpha\neq 1$, $\Ff$ is not simple.
	Specifically, for $\alpha\notin\Q$, every characteristic leaf contained in $S_\alpha\cap H_1^{-1}(]0,1/4[)$ is dense in $S_\alpha$.
	Finally, for $\alpha=m/n$, with $m$ and $n$ coprime integers, there are characteristic leaves with non-trivial holonomy: characteristic leaves contained in $S_\alpha\cap H_1^{-1}(0)$ (resp.~$S_\alpha\cap H_1^{-1}(1/4)$) have cyclic holonomy group of order $m$ (resp.~$n$).
\end{remark}

Put $U_\alpha:=S_\alpha\cap H_1^{-1}(]0,1/4[)$.
Then $U_\alpha$ is an open and dense subset of $S_\alpha$, covered by charts with local coordinates $(u_1,u_2,x,y,z)$ defined by
\begin{gather*}
	u_1=\phi_3,\quad 
	u_2=\phi_1+\alpha\phi_2,\\ 
	x=H_2,\quad
	y=\phi_2-\alpha\phi_1,\quad 
	z=\phi+\sum_{i=1}^3 H_i\left(\phi_i-\frac{1}{2}\sin(2\phi_i)\right).
\end{gather*}
The latter are actually (local) Darboux coordinates on $S_\alpha$, i.e.~locally $\theta_\alpha=dz-ydx$.
So, locally, we also have
\begin{equation}
	\label{eq:second_obstructed_example_Darboux}
	C_\alpha=\left\langle\frac{\partial}{\partial u_1},\frac{\partial}{\partial u_2},\frac{\partial}{\partial y},D:=\frac{\partial}{\partial x}+y\frac{\partial}{\partial z}\right\rangle,\quad T\Ff=\left\langle\frac{\partial}{\partial u_1},\frac{\partial}{\partial u_2}\right\rangle.
\end{equation}

Note that the vector fields $\frac{\partial}{\partial u_1}$, $\frac{\partial}{\partial u_2}$, $\frac{\partial}{\partial y}$, $D$, $\frac{\partial}{\partial z}$ do not depend on the Darboux chart, and are globally defined on $U_\alpha$.
Moreover, the vector fields $\frac{\partial}{\partial u_1}$ and $\frac{\partial}{\partial u_2}$ (resp.~leaf-wise differential forms $d_\Ff u_1\equiv(du_1)|_{T\Ff}$ and $d_\Ff u_2\equiv(du_2)|_{T\Ff}$) uniquely prolong to a global frame of $T\Ff$ (resp.~$T^\ast\Ff$). 
Hence, for any $0<\eps<1/8$, we can pick a distribution $G$ on $S_\alpha$ complementary to $T\Ff$ and satisfying the following additional property
\begin{equation}
	\label{eq:second_obstructed_example_complementary}
	\left.G\right|_{U_{\alpha,\eps}}=\left.\left\langle\frac{\partial}{\partial y},\ D,\ \frac{\partial}{\partial z}\right\rangle\right|_{U_{\alpha,\eps}},
\end{equation}
where $U_{\alpha,\eps}\subset U_\alpha$ is the open subset defined by $U_{\alpha,\eps}:=S_\alpha\cap H_1^{-1}(]\eps,1/4-\eps[)$.
From now on we assume we have fixed such a distribution $G$.
After this choice:
\begin{itemize}
	\item around $S_\alpha$, $(M,C)$ identifies with the contact thickening of $(S_\alpha,C_\alpha)$ determined by the splitting $TS_\alpha=T\Ff\oplus G$ 
	(see Section~\ref{sec:contact_thickening}),
	\item the $L_\infty$-algebra of $S_\alpha$ is given by $(\Omega^\bullet(\Ff),\{\mathfrak{m}_k\})$ with the multibrackets determined by $G$ as in Theorem~\ref{prop:multi}.
\end{itemize}

Focus on the explicit expressions of $\mathfrak{m}_1$ and $\mathfrak{m}_2$.
From coorientability, $\mathfrak{m}_1:\Omega^\bullet(\Ff)\to\Omega^\bullet(\Ff)$ boils down to the leaf-wise de Rham differential $d_\Ff : \Omega^\bullet(\Ff) \to \Omega^\bullet (\Ff)$.
Hence, for $f,g\in C^\infty(S_\alpha)$, the following identities hold:
\begin{equation}
	\label{eq:second_obstructed_example_contact2}
	\begin{gathered}
		\mathfrak{m}_1(f)=\frac{\partial f}{\partial u_1}d_\Ff u_1+\frac{\partial f}{\partial u_2}d_\Ff u_2,\\
		\mathfrak{m}_1(fd_\Ff u_1+gd_\Ff u_2)=\left(\frac{\partial g}{\partial u_1}-\frac{\partial f}{\partial u_2}\right)d_\Ff u_1\wedge d_\Ff u_2.
	\end{gathered}
\end{equation}
Let 
\[
J_\alpha\equiv\{-,-\}_\alpha:C^\infty(U_\alpha)\times C^\infty(U_\alpha)\to C^\infty(U_\alpha)
\]
be the bi-differential operator defined by
\begin{equation*}
	J_\alpha=D\wedge\frac{\partial}{\partial y}+\id\wedge\frac{\partial}{\partial z}.
\end{equation*}
From \eqref{eq:second_obstructed_example_Darboux},  \eqref{eq:second_obstructed_example_complementary}, and Theorem~\ref{prop:multi} 
we get that
\begin{equation}
	\label{eq:second_obstructed_example_contact4}
	\begin{aligned}
		&\mathfrak{m}_2(f,g)=-\{f,g\}_\alpha,\\
		&\mathfrak{m}_2(f,g_1d_\Ff u_1+g_2d_\Ff u_2)=-\{f,g_1\}_\alpha d_\Ff u_1-\{f,g_2\}_\alpha d_\Ff u_2,\\
		&\mathfrak{m}_2(f_1d_\Ff u_1+f_2d_\Ff u_2,g_1d_\Ff u_1+g_2d_\Ff u_2)\\
		&\qquad\qquad\quad =\left(\{f_1,g_2\}_\alpha-\{f_2,g_1\}_\alpha\right)d_\Ff u_1\wedge d_\Ff u_2,
	\end{aligned}
\end{equation}
on $U_{\alpha,\eps}$.

We can extract from~\eqref{eq:second_obstructed_example_contact2} and~\eqref{eq:second_obstructed_example_contact4} information about the coisotropic deformation problem of $S_\alpha$.
Take $s=fd_\Ff u_1+gd_\Ff u_2\in\Omega^1(\Ff)$. From Corollary~\ref{cor:inf1}, it is an infinitesimal coisotropic deformation if and only if 
\begin{equation}
	\label{eq:second_obstructed_example_contact_deformations}
	\frac{\partial g}{\partial u_1}-\frac{\partial f}{\partial u_2}=0.
\end{equation}
Additionally, from Corollary~\ref{cor:infequi},
two infinitesimal coisotropic deformations $s_i=f_id_\Ff u_1+g_id_\Ff u_2$, with $i=0,1$, are infinitesimally Hamiltonian equivalent if and only if there exists $h\in C^\infty(S_\alpha)$ such that
\begin{equation*}
	f_1=f_0+\frac{\partial h}{\partial u_1},\quad g_1=g_0+\frac{\partial h}{\partial u_2}.
\end{equation*}
Let $s=fd_\Ff u_1+gd_\Ff u_2$ be an infinitesimal coisotropic deformation, with $\operatorname{supp}(s)\subset U_\alpha$. Assume that $s$ can be prolonged to a formal coisotropic deformation.
Since $\eps$ can be chosen arbitrarily small,
from Proposition~\ref{prop:Kuranishi}, there exist $h,k\in C^\infty(S_\alpha)$ such that
\begin{equation}
	\label{eq:second_obstructed_example_contact5}
	f\frac{\partial g}{\partial z}-g\frac{\partial f}{\partial z}+(Df)\frac{\partial g}{\partial y}-(Dg)\frac{\partial f}{\partial y}
	=\frac{\partial k}{\partial u_1}-\frac{\partial h}{\partial u_2}.
\end{equation}
Integrating~\eqref{eq:second_obstructed_example_contact5} over a compact characteristic leaf $\Ll$, we get the following (weaker) necessary condition for the formal prolongability of $s$
\begin{equation}
	\label{eq:second_obstructed_example_contact6}
	\iint\limits_{\Ll}\left(f\frac{\partial g}{\partial z}-g\frac{\partial f}{\partial z}+(Df)\frac{\partial g}{\partial y}-(Dg)\frac{\partial f}{\partial y}\right)d_\Ff u_1 d_\Ff u_2=0.
\end{equation}

\begin{proposition}
	\label{prop:second_obstructed_example_contact}
	If $\alpha\in\Q$, then the coisotropic submanifold $S_\alpha$ of $(M,C)$ is formally obstructed.
\end{proposition}

\begin{proof}
	Let $\alpha=\frac{m}{n}$, with $m$ and $n$ coprime integers.
	In this case the characteristic foliation $\Ff_\alpha$ has orientable compact leaves.
	Pick two non-constant functions $\chi\in C^\infty(\bbS^1)$ and $\rho\in C^\infty(\R)$ such that $\operatorname{supp}(\rho) \subset \left]0,1/4\alpha\right[$.
	Then there exist two functions $f,g\in C^\infty(S_\alpha)$ uniquely determined by
	\begin{equation}
		\label{eq:prop:second_obstructed_example_contact}
		f(u_1,u_2,x,y,z)=\rho(x),\qquad g(u_1,u_2,x,y,z)=\rho(x)\chi(ny).
	\end{equation}
	Put $s:=fd_\Ff u_1+gd_\Ff u_2\in\Omega^1(\Ff)$.
	The latter is an infinitesimal coisotropic deformation of $S_\alpha$ which is formally obstructed.
	Indeed $s$ fulfills \eqref{eq:second_obstructed_example_contact_deformations}, but it fails to fulfill the constraint \eqref{eq:second_obstructed_example_contact6}:
	\begin{equation*}
		\iint\limits_{\Ll(\bar x,\bar y,\bar z)}\left(f\frac{\partial g}{\partial z}-g\frac{\partial f}{\partial z}+(Df)\frac{\partial g}{\partial y}-(Dg)\frac{\partial f}{\partial y}\right)d_\Ff u_1 d_\Ff u_2 =\tfrac{m^2+n^2}{n}(2\pi)^2\rho(\bar x)\rho'(\bar x)\chi'(n\bar y)\neq 0,
	\end{equation*}
	where, for any $(\bar x,\bar y,\bar z)$, we denoted by $\Ll(\bar x,\bar y,\bar z)$ the characteristic leaf given by the level set $x=\bar x, y=\bar y, z=\bar z$.
\end{proof}

\begin{remark}
	The case $\alpha \notin \Q$ is more involved. In particular, it requires a better understanding of the characteristic foliation of $(S_\alpha, C_\alpha)$. We hope to discuss it in details elsewhere.
\end{remark}

\appendix

\section{Derivations, infinitesimal automorphisms of vector bundles and the Schouten--Jacobi algebra}\label{sec:app_0}

Let $M$ be a smooth manifold, and let $E \to M$ be a vector bundle over $M$. A first order differential operator $\Delta : \Gamma (E) \to \Gamma (E)$  is a derivation of $E$ if there exists a (necessarily unique) vector field $X$ such that
$
\Delta (fe) = X(f) e + f\Delta e
$
for all $f \in C^\infty (M)$, and $e \in \Gamma (E)$. In this case we write $\sigma (\Delta) = X$, and call it the \emph{symbol} of $\Delta$.
The space of derivations of $E$ will be denoted by $\Der E$.
It is the space of sections of a (transitive) Lie algebroid $\der E \to M$ over $M$, sometimes called the \emph{gauge algebroid of} $E$, whose Lie bracket is the commutator of derivations, and whose anchor is the symbol $\sigma : \der E \to TM$ (see, e.g., \cite[Theorem 1.4]{KM2002} for details). The fiber $\der_x E$ of $\der E$ through $x \in M$ consists of $\R$-linear maps $\delta : \Gamma (E) \to E_x$ such that there exists a, necessarily unique, tangent vector $v \in T_x M$, called the \emph{symbol of $\delta$} and also denoted by $\sigma (\delta)$, satisfying the obvious Leibniz rule
$
\delta (f e) = v (f) e (x) + f (x) \delta (e),
$
for all $f \in C^\infty (M)$ and $e \in \Gamma (E)$.

\begin{remark}
	If $E$ is a line bundle, then every first order differential operator $\Gamma (E) \to \Gamma (E)$ is a derivation of $E$. Consider the trivial line bundle $\R_M := M \times \R$. Then $\Gamma (\R_M) = C^\infty (M)$. First order differential operators $\Gamma (\R_M) \to \Gamma (\R_M)$ or, equivalently, derivations of $\R_M$, are the operators of the form $X + a : C^\infty (M) \to C^\infty (M)$, where $X$ is a vector field on $M$ and $a \in C^\infty (M)$ is interpreted as an operator (multiplication by $a$). Accordingly, in this case, there is a natural direct sum decomposition $ \Der\R_M = \mathfrak{X} (M) \oplus C^\infty (M)$, the projection $ \Der\R_M \to C^\infty (M)$ being given by $\Delta \mapsto \Delta 1$.
\end{remark}

The construction of the gauge algebroid of a vector bundle is functorial, in the following sense. Let $\phi : E \to F$ be a morphism of vector bundles $E \to M$, $F \to N$, over a smooth map $\underline{\phi} : M \to N$. We assume that $\phi$ is \emph{regular}, in the sense that it is an isomorphism when restricted to fibers. In particular a section $f$ of $F$ can be pulled-back to a section $\phi^\ast f$ of $E$, defined by $(\phi^\ast f)(x) := (\phi|_{E_x}^{-1} \circ f \circ \underline{\phi}) (x)$, for all $x \in M$. Then $\phi$ induces a morphism of Lie algebroids $\der \phi : \der E \to \der F$ defined by
\[
\der \phi (\delta) f := \phi(\delta (\phi^\ast f)), \quad \delta \in \der E, \quad f \in \Gamma (F).
\]
We also denote $\phi_\ast := \der \phi$.

Derivations of a vector bundle $E$ can be also understood as infinitesimal automorphisms of $E$ as follows. First of all, a derivation $\Delta$ of $E$ determines a derivation $\Delta^\ast$ of the dual bundle $E^\ast$, with the same symbol as $\Delta$. Derivation $\Delta^\ast$ is defined by
$
\Delta^\ast \varphi := \sigma (\Delta) \circ \varphi - \varphi \circ \Delta
$,
where $\varphi : \Gamma (E) \to C^\infty (M)$ is a $C^\infty (M)$-linear form, i.e.~a section of $E^\ast$. Now, recall that an automorphism of $E$ is a regular morphism $\phi : E \to E$ covering a diffeomorphism $\underline{\phi} : M \to M$. An \emph{infinitesimal automorphism} of $E$ is a vector field $Y$ on $E$ whose flow consists of (local) automorphisms. In particular, $Y$ projects onto a (unique) vector field $\underline{Y} \in \mathfrak{X} (M)$. Note that one parameter families of infinitesimal automorphisms generate one parameter families of automorphisms and vice-versa, any one parameter family of automorphisms is generated by a one parameter family of infinitesimal automorphisms. Infinitesimal automorphisms of $E$ are sections of a (transitive) Lie algebroid over $M$, whose Lie bracket is the commutator of vector fields on $E$, and whose anchor is $Y \mapsto \underline{Y}$. It can be proved that a vector field $Y$ on $E$ is an infinitesimal automorphism if and only if it preserves fiber-wise linear functions on $E$, i.e.~sections of the dual bundle $E^\ast$. Finally, note that the restriction of an infinitesimal automorphism to fiber-wise linear functions $Y|_{\Gamma (E^\ast)} : \Gamma (E^\ast) \to \Gamma (E^\ast)$ is a derivation of $E^\ast$, and the correspondence $Y \mapsto Y|^\ast_{\Gamma (E^\ast)}$ is a well-defined isomorphism between the Lie algebroid of infinitesimal automorphisms and the gauge algebroid of $E$.

If $\Delta$ is a derivation of $E$, $Y$ is the corresponding infinitesimal automorphism, and $\{ \phi_t \}$ is its flow, then we will also say that $\Delta$ \emph{generates the flow $\{ \phi_t\}$ by automorphisms}. We have
\[
\left. \frac{d}{dt}\right|_{t = 0} \phi_t^\ast e = \Delta e,
\]
for all $e \in \Gamma (E)$. Similarly, if $\{ \Delta_t \}$ is a smooth one parameter family of derivations of $E$, $\{ Y_t \}$ is the corresponding one parameter family of infinitesimal automorphisms, and $\{ \psi_t \}$ is the associated one parameter family of automorphisms, then we will say that $\{ \Delta_t \}$ \emph{generates} $\{ \psi_t \}$. We have
\[
\frac{d}{dt} \psi_t^\ast e = (\psi_t^\ast \circ \Delta_t) e .
\]

We now pass to multiderivations. We limit ourselves to the case when $E$ is a line bundle, and we denote it by $L$. First of all, notice that, in this case, $\der L \otimes L^\ast$ is the dual vector bundle to the first jet bundle $J^1 L \to M$ of $L$. In the paper we often adopt the following notation: $J_1L := \der L \otimes L^\ast$. The exterior algebra $\Gamma (\wedge^\bullet J_1 L)$ consists of alternating, first order multi-differential operators from $\Gamma(L)$ to $C^\infty(M)$, i.e.~$\R$-multi-linear maps which are first order differential operators on each entry separately. Let $\Delta \in \Gamma (\wedge^k J_1 L)$, and $\Delta^\prime \in \Gamma (\wedge^{k^\prime} J_1 L)$. If we interpret $\Delta$ and $\Delta^\prime$ as multi-differential operators, then their exterior product is given by
\begin{equation}
	\label{eq:multi-differential_operators}
	(\Delta\wedge\Delta^\prime)(\lambda_1,\dots,\lambda_{k+k^\prime}) = \sum_{\sigma\in S_{k,k^\prime}}(-)^\sigma\Delta(\lambda_{\sigma(1)},\dots,\lambda_{\sigma(k)})\Delta^\prime(\lambda_{\sigma(k+1)},\dots,\lambda_{\sigma(k+k^\prime)}),
\end{equation}
where $\lambda_1,\dots,\lambda_{k+k^\prime}\in\Gamma(L)$, and $S_{k, k^\prime}$ denotes $(k,k^\prime)$-unshuffles.
Similarly, $\Gamma (\wedge^\bullet J_1 L \otimes L)$ consists of alternating, first order multi-differential operators from $\Gamma(L)$ to itself. For this reason we often denote $\Der^\bullet L := \Gamma (\wedge^\bullet J_1 L \otimes L)$, where $\Der^0 L = \Gamma (L)$ and $\Der^1L=\Der L$. Note that $\Der^\bullet L$ does also identify with $L$-valued, skew-symmetric forms on $J^1 L$. We will often understand this identification.

We also consider the graded space $(\Der^\bullet L)[1]$ obtained from $\Der^\bullet L$ by shifting degrees by $1$. Beware that an element of $\Der^k L$ is a multi-differential operator with $k$-entries but its degree in $(\Der^\bullet L)[1]$ is $k-1$. There is a $\Gamma (\wedge^\bullet J_1 L)$-module structure on $(\Der^\bullet L)[1]$ given by the same formula (\ref{eq:multi-differential_operators}) as above.

\begin{remark}
	A Jacobi bracket $\{-,-\}$ on $L$ will be interpreted as an element of $\Der^2 L$. So, it corresponds to the associated bi-linear form $J : \wedge^2 J^1 L \to L$ via the identification $\Der^2 L = \Gamma (\operatorname{Hom}(\wedge^2 J^1 L, L))$. Accordingly, we will sometimes identify $\{-,-\}$ and $J$ (see Section \ref{sec:abstract_jac_mfd} for more details).
\end{remark}

The Lie bracket on $\Der^1 L = \Gamma (DL)$ and the tautological action of $DL$ on $L$ extend to a Lie bracket on $(\Der^\bullet L)[1]$. This Lie bracket is a ``Jacobi version'' of the Schouten bracket between multi-vector fields, therefore we call it the \emph{Schouten-Jacobi bracket} and denote it by $[-,-]^{SJ}$. It is defined by
\[
[ \square , \square^\prime ]^{SJ} := (-)^{k k^\prime}\square \circ \square^\prime - \square^\prime \circ \square,
\]
where $\square \in \Der^{k+1} L$, $\square^\prime \in \Der^{k^\prime +1} L$, and $\square \circ \square^\prime$ is given by the following ``\emph{Gerstenhaber formula}'':
\begin{equation*}
	(\square \circ \square^\prime) (\lambda_1, \dots, \lambda_{k+k^\prime +1}) = \sum_{\tau \in S_{k^\prime+1, k}}(-)^{\tau} \square (\square^\prime (\lambda_{\tau(1)},\dots, \lambda_{\tau(k^\prime+1)}) ,\lambda_{\tau (k^\prime+2)}, \dots, \lambda_{\tau (k+k^\prime +1)}),
\end{equation*}
where $\lambda_1,\dots, \lambda_{k+k^\prime +1} \in \Gamma (L)$.

The Schouten-Jacobi bracket satisfies the following \emph{Leibniz property}: there is an action by (graded) derivation $\square \mapsto X_\square$ of $((\Der^\bullet L)[1], [-,-]^{SJ})$ on the graded algebra $\Gamma (\wedge^\bullet J_1 L)$ such that
\begin{equation}
	\label{eq:genleib2}
	[\square, \Delta \wedge \square']^{SJ} = X_\square (\Delta) \wedge \square' + (-)^{|\square||\Delta|} \Delta \wedge [\square, \square']^{SJ},
\end{equation}

for all $\square, \square \in (\Der^\bullet L)[1]$ and all $\Delta \in \Gamma (\wedge^\bullet J_1 L)$. The action $\square \mapsto X_\square$ is defined as follows. For $\square \in \Der^{k+1} L$, the \emph{symbol} of $\square $, denoted by $\sigma_\square \in \Gamma (TM \otimes \wedge^k J_1 L )$, is, by definition, the $\wedge^k J_1 L$-valued vector field on $M$ implicitly defined by:
\[
\sigma_\square (f)(\lambda_1 , \dots, \lambda_{k}) \lambda := \square (f \lambda, \lambda_1,\dots,\lambda_{k}) - f \square ( \lambda, \lambda_1,\dots,\lambda_{k}),
\]
where $f\in C^\infty (M)$.
Finally, for any $\Delta \in \Gamma (\wedge^l J_1 L)$, and $\square \in \Der^{k+1} L$, the section $X_\square(\Delta)\in\Gamma (\wedge^{k+l} J_1 L)$ is given by
\begin{equation}
\begin{aligned}
	X_\square (\Delta) (\lambda_1, \dots, \lambda_{k+l}) :=& (-)^{k(l-1)}\sum_{\tau \in S_{l,k}}(-)^\tau\sigma_\square (\Delta (\lambda_{\tau(1)}, \dots, \lambda_{\tau (l)}))(\lambda_{\tau(l+1)}, \dots, \lambda_{\tau (k+l)}) \\
	&- \sum_{\tau\in S_{k+1,l-1}}(-)^\tau \Delta (\square (\lambda_{\tau(1)}, \dots, \lambda_{\tau(k+1)}), \lambda_{\tau(k+2)}, \dots, \lambda_{\tau (k+l)}).
\end{aligned}
\end{equation}

\begin{remark}
	Denote by $\mathfrak{X}^\bullet (M) = \Gamma (\wedge^\bullet TM)$ the Gerstenhaber algebra of (skew-symmetric) multi-vector fields on $M$. When $L = \R_M$, then $ \Der^k L = \Gamma (\wedge^k J_1 L)$. Moreover, there is a canonical direct sum decomposition $\Der^{k+1} L = \mathfrak{X}^{k+1} (M) \oplus \mathfrak{X}^k (M)$, where the projection $\Der^{k+1} L \to \mathfrak{X}^k (M)$ is given by $\square \mapsto \square ( 1, -, \dots, -)$. In particular, the Schouten--Jacobi bracket on $(\Der^\bullet L)[1]$ can be expressed in terms of the Schouten--Nijenhuis bracket on multi-vector fields (see \cite{GM2001} for more details).
\end{remark}

\section{The $L_\infty$-algebra of a pre-contact manifold}
\label{sec:OP_formula}

In this appendix we provide a coordinate proof of Theorem \ref{prop:multi}.

Let $(S, C_S)$ be a pre-contact manifold, with normal line bundle $\ell := TS / C_S$, and characteristic foliation $\Ff$, and let $G$ be a complementary distribution to $T  \Ff$, i.e., $T  S =  G \oplus T \Ff$. As shown in Subsection \ref{sec:contact_thickening}, the bundle $T_\ell^\ast \Ff := T^\ast \Ff \otimes \ell$ is equipped with an hyperplane distribution $C$ which is contact in a neighborhood of the zero section $\mathbf 0$: the contact thickening of $(S, C_S)$. Moreover $\mathbf 0$ is a coisotropic embedding. In particular, there is an $L_\infty$-algebra $(\Gamma (\wedge^\bullet N_\ell S \otimes \ell)[1], \{ {\mathfrak m}_k \})$ attached to $(S, C_S)$. In this case, $N S = T_\ell^\ast \Ff$, so that $\Gamma (\wedge^\bullet N_\ell S \otimes \ell) \cong \Omega (\Ff, \ell)$. In the following we will understand this isomorphism. We will show below that the multi-brackets ${\mathfrak m}_k$ are given by formula (\ref{multi}) which is the contact analogue of Oh-Park formula (see \cite[Formula (9.17)]{OP2005}). We will do this in local coordinates. From now on, we freely use notations and conventions from Subsections \ref{sec:contact_thickening}, \ref{sec:transversal_geometry} and \ref{sec:multi-brackets}.

Let $(x^i,u^a,z,p_i)$ be local coordinates on $T^\ast_\ell \Ff$ chosen as in the proof of Proposition \ref{prop:contact_thickening}. Distribution $G$ on $S$ is then locally spanned by vector fields of the form
\[
\G_a := \frac{\partial}{ \partial u^a} + G_a^i \frac{\partial}{\partial x^i},\quad \G = \frac{\partial}{\partial z} + G^i \frac{\partial}{\partial x^i},
\] and the structure and curvature forms of $C_S$ are locally
\[
\theta = (dz -C_a du^a) \otimes \mu, \quad  \omega=\frac{1}{2}\omega_{ab}du^a\wedge du^b.
\]
The matrix $(\omega_{ab})$ is invertible. Denote by $(\omega^{ab})$ its inverse. 
We also need the curvature form $F \in \Gamma (\wedge^2 N^\ast \Ff \otimes T\Ff)$ of $G$. It is locally given by
\[
F = \left( \frac{1}{2} F_{ab}^i du^a \wedge du^b + F^i_a du^a \wedge dz \right)  \otimes \frac{\partial}{\partial x^i},
\]
where 
\[
F^i_{ab}  := \G_a (G^i_b) - \G_b (G^i_a) \quad \text{and} \quad F^i_a = \G_a (G^i) - \G (G^i_a).
\]
It is easy to see that the structure form $\Theta$ of the contact distribution on the contact thickening is locally given by
\[
\Theta = \left[(1 - p_i G^i)dz - (C_a + p_i G^i_a)du^a + p_i dx^i \right] \otimes \mu,
\]
A long, but straightforward computation then shows that the bi-linear form $J \in \Gamma (\wedge^2 J_1L \otimes L)$ of the Jacobi stucture on the contact thickening is locally given by
\[
J = \left(\frac{1}{2} (\W_{\boldsymbol p}^{-1})^{\alpha \beta} \square_\alpha \wedge \square_\beta + \nabla^i \wedge \nabla_i \right) \otimes \mu,
\]
where $\W_{\boldsymbol p} := \W + p_i \F^i$, and
\[
\W := \left( 
\begin{array}{ccc}
0 & C_b & -1 \\
-C_a & \omega_{ab} & 0 \\
1 & 0 & 0
\end{array}
\right) \quad \text{and} \quad \F^i :=  \left( 
\begin{array}{ccc}
0 & 0 & 0 \\
0 & F^i_{ab} & F_a^i \\
0 & -F^i_b & 0
\end{array}
\right).
\]
Moreover $\nabla^i, \nabla_i \in \operatorname{Diff}_1 (L, \R_{T^\ast_\ell \Ff}) = \Gamma (J_1 L)$ are given by
\[
\nabla^i (f \mu) := \frac{\partial f}{\partial p_i} \quad \text{and} \quad \nabla_i (f \mu) = \frac{\partial f}{\partial x_i}.
\]
Finally, $\square_\alpha = \square, \square_a, \square_\circ \in \Gamma (J_1 L)$ with
\[
\begin{aligned}
\square & := \mu^\ast - p_i \nabla^i, \\
\square_a & := \nabla_a - p_j \frac{\partial G^{j}_a}{\partial x^i} \nabla^i + G^i_a \nabla_i, \\
\square_\circ & := \nabla - p_j \frac{\partial G^{j}}{\partial x^i} \nabla^i + G^i \nabla_i,
\end{aligned}
\]
where
\[
\mu^\ast (f \mu) := f, \quad \nabla_a (f \mu) := \frac{\partial f}{\partial u^a} \quad \text{and} \quad \nabla (f \mu) := \frac{\partial f}{\partial z}.
\]
Now, the ${\mathfrak m}_k$'s are graded first order multi-differential operators. In particular, they are completely determined by their action on elements in $\Omega (\Ff, \ell)$ of the form $f \mu$, $f \in C^\infty (S)$, and $d_\Ff x^i \otimes \mu$. The right hand side of Equation (\ref{multi}) is also a graded first order multi-differential operator in its arguments. We conclude that Equation (\ref{multi}) is satisfied, provided only it is satisfied for $\nu_1, \dots, \nu_k$ being generators of the above mentioned kind.

An easy computation in local coordinates shows that ${\mathfrak{m}}_1 = - d_{\Ff}$. Moreover, from Corollary \ref{prop:multi-brackets_coordinates} we see that ${\mathfrak{m}}_{k}$ depends on the derivatives of $\W_{\boldsymbol p}^{-1}$ with respect to the $p_i$'s at $\boldsymbol p := (p_i) = 0$ up to order $k$. By induction on $k$ we get
\begin{equation}\label{eq:induction}
	\left. \frac{\partial^k \W_{\boldsymbol p}}{\partial p_{i_1} \cdots \partial p_{i_k}} \right|_{\boldsymbol p = 0} = (-)^k \sum_{\sigma \in S_k} \W^{-1} \F^{i_{\sigma(1)}} \W^{-1} \cdots \F^{i_{\sigma(k)}} \W^{-1}.
\end{equation}
Now, formula (\ref{multi}) follows from Corollary \ref{prop:multi-brackets_coordinates}, equation (\ref{eq:induction}) and the remark that
\[
j^1_G (f \mu) = f j^1 \mu + (\G_a f ) du^a\otimes \mu + (\G f ) dz\otimes \mu,
\]
and
\begin{align*}
	j^1_G (d_\Ff x^i \otimes \mu) &= d_\Ff x^i \otimes j^1 \mu + \frac{\partial G^i_a}{\partial x^j} d_\Ff x^j \otimes (du^a \otimes \mu) \\
	&\quad +  \frac{\partial G^i}{\partial x^j} d_\Ff x^j \otimes (dz \otimes \mu),
\end{align*}
after a straightforward computation.

\section*{Acknowledgements}
H.V.L.~thanks Nguyen Tien Zung for various help in preparing this note.
She acknowledges the IBS CGP at Pohang for financial support and hospitality during her visit, where a part of this paper has been written.
Y.G.O.~thanks Institute of Mathematics of ASCR at Zitna for its hospitality during his visit.
A.G.T. is partially supported by GNSAGA of INdAM, Italy.
L.V. is member of the GNSAGA of INdAM, Italy.

\end{document}